\newtheorem{theorem}{Theorem}[section]
\newtheorem{proposition}[theorem]{Proposition}
\newtheorem{lemma}[theorem]{Lemma}
\newtheorem{example}[theorem]{Example}
\newtheorem{remark}[theorem]{Remark}
\newtheorem{conjecture}[theorem]{Conjecture}
\numberwithin{equation}{section}
\newcommand*\tcircled[1]{{\normalfont \footnotesize\raisebox{2pt}{\textcircled{\raisebox{-1pt}{#1}}}} }
\newcommand*\circled[1]{\footnotesize\textcircled{\raisebox{-1pt}{#1}}}
\newcommand{\beq}{\begin{equation}}
\newcommand{\eeq}{\end{equation}}
\newcommand{\beqn}{\begin{equation*}}
\newcommand{\eeqn}{\end{equation*}}
\newcommand{\ch}{{\normalfont \textrm{ch}}}
\newcommand{\wt}{{\normalfont \textrm{wt}}}
\author{M. Butorac}
\address{Marijana Butorac:\newline\indent
Faculty of Mathematics, University of Rijeka,\newline\indent Radmile Matej\v{c}i\'{c} 2, 51000 Rijeka, Croatia}
\email{mbutorac@math.uniri.hr}
\author{S. Ko\v zi\' c}
\address{Slaven Ko\v{z}i\'{c}:\newline\indent  Department of Mathematics, Faculty of Science, University of Zagreb,\newline\indent   Bijeni\v{c}ka cesta 30, 10000 Zagreb, Croatia}
\email{kslaven@math.hr}
\author{A. Meurman}\address{Arne Meurman:\newline\indent  Department of Mathematics, University of Lund, \newline\indent Box 118, 22100 Lund, Sweden}
\email{arne.meurman@math.lu.se}
\author{M. Primc}\address{Mirko Primc:\newline\indent  Department of Mathematics, Faculty of Science, University of Zagreb,\newline\indent   Bijeni\v{c}ka cesta 30, 10000 Zagreb, Croatia}
\email{primc@math.hr}
\begin{document}

\title[Lepowsky's and Wakimoto's product   formulas for $C_l^{(1)}$]
{Lepowsky's and Wakimoto's product formulas for the affine Lie algebras $C_l^{(1)}$}

\begin{abstract} 
In this paper, we  recall Lepowsky's and Wakimoto's product character formulas formulated in a new way by using arrays of specialized weighted crystals of negative roots for affine Lie algebras of type $C_l^{(1)}$,  $D_{l+1}^{(2)}$ and $A_{2l}^{(2)}$.
Lepowsky--Wakimoto's infinite periodic products appear as one side of (conjectured) Rogers--Ramanujan-type combinatorial identities for affine Lie algebras of type $C_l^{(1)}$.
\end{abstract}

\maketitle

\tableofcontents

\section{Introduction}\label{section_01}

In the last several decades, numerous applications of Rogers--Ramanujan-type identities have been extensively studied. For example, in the early eighties,  they emerged  in the areas of statistical mechanics \cite{ABF,Bax} and orthogonal polynomials \cite{AI,Br}.  On the other hand, in the last decade,  in addition to their well-known role in combinatorics and number theory (see, e.g., \cite{DK,GOW,W}), their close connections with modular forms \cite{BCFK}, algebraic geometry \cite{BMS,GOR} and double affine Hecke algebras \cite{CF}  were investigated. In this paper, we are interested in a line of research going back to Lepowsky and Milne \cite{LM}, which connected the  product sides
of
Gordon--Andrews--Bressoud's generalization \cite{Go,A1,A2,Br1,Br2}
of the Rogers--Ramanujan identities
 with principally specialized characters of
integrable highest weight modules for the affine Lie algebra $\widehat{\mathfrak{sl}}_2$. Their seminal paper \cite{LM} motivated a    research   of Lepowsky and Wilson \cite{LW1}--\cite{LW4}, which led to  discovery of vertex operators in the principal picture of  integrable highest weight  $\widehat{\mathfrak{sl}}_2$-modules and   bases of vacuum spaces for the
principal Heisenberg subalgebra which are parametrized by partitions satisfying certain difference  conditions. 

J. Lepowsky proved that the principally specialized characters of standard modules for affine Lie algebras can be written as infinite periodic products. M. Wakimoto extended Lepowsky's argument for some other specializations of characters. The aim of this paper is to write explicitly all Lepowsky--Wakimoto's product formulas for affine Lie algebras of types $C_l^{(1)}$, $l\geq 2$, by using arrays of specialized weighted crystals of negative roots for  affine Lie algebras of type $C_l^{(1)}$,  $D_{l+1}^{(2)}$ and $A_{2l}^{(2)}$. These weighted crystals parametrize root vectors for negative roots in terms of crystal bases of adjoint representations of $C_l$ and $B_l$, and  $B_l$-modules $L_{B_l}(\omega_1)$ and $L_{B_l}(2\omega_1)$. Although inspired by the crystal bases theory, we use only the combinatorial notion of weighted crystals for Cartan matrices, and the realization of affine Kac--Moody Lie algebras.

 As an  illustration of the above described approach, we remark that,
by using Wakimoto's product formula for $\ch^{(2,1,1)}L_{C^{(1)}_2}(1,0,0)$, and the basis for the basic module $L_{C^{(1)}_2}(1,0,0)$, we get an analogue of Capparelli's identity:
\newtheorem*{theorem*}{Theorem}
\begin{theorem*}
 The number of colored partitions satisfying level $1$ difference conditions
on the array  
$$
\begin{matrix}
& & & & \textbf{\upshape 7}& & \textbf{\upshape 9}& & \textbf{\upshape 11}&   \\
& & &\textit{\textcolor{red}{5}} & &\textbf{\upshape 8}& &\textbf{\upshape 10} & & \textbf{\upshape 12}\\
& &\textit{\textcolor{red}{4}} & &\textit{\textcolor{red}{6}} & &\textbf{\upshape 9} & &\textbf{\upshape 11} & \\
&\textit{\textcolor{red}{3}} & &\textit{\textcolor{red}{5}} & &\textit{\textcolor{red}{7}} & &\textbf{\upshape 10} & &\textit{\textcolor{red}{13}} \\
\textit{\textcolor{red}{2}}& &\textit{\textcolor{red}{4}} & &\textit{\textcolor{red}{6}} & &\textit{\textcolor{red}{8}} & &\textit{\textcolor{red}{12}} &
\end{matrix}
$$
  equals the number of partitions of $n$ into parts
$$
j\not\equiv 0,\pm1,\pm6,\pm7, 8\mod 16. 
$$
\end{theorem*}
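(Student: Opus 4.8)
The plan is to prove the identity by showing that both of its sides are the coefficient of $q^n$ in one and the same specialized character, namely $\ch^{(2,1,1)}L_{C^{(1)}_2}(1,0,0)$, and then to compute that character in two independent ways. This is the standard non-bijective strategy: the product side will come from Wakimoto's formula, and the colored-partition side from the monomial basis of the basic module, so equality of the two coefficient functions is forced once both expressions for the character are in hand.

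First I would handle the right-hand side. By Wakimoto's product formula, $\ch^{(2,1,1)}L_{C^{(1)}_2}(1,0,0)$ equals an infinite periodic product $\prod_{j\geq 1}(1-q^j)^{-a_j}$, whose exponents $a_j$ are read off from the specialized numerator and denominator of the Weyl--Kac character formula for the basic module, together with the root data of $C^{(1)}_2$ and the weights attached by the specialization $(2,1,1)$. The task here is the purely arithmetic one of simplifying this product: I would verify that the exponents are periodic of period $16$, with $a_j=1$ precisely when $j\equiv \pm2,\pm3,\pm4,\pm5\pmod{16}$ (equivalently $j\not\equiv 0,\pm1,\pm6,\pm7,8\pmod{16}$) and $a_j=0$ otherwise. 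The coefficient of $q^n$ in the resulting product $\prod(1-q^j)^{-1}$ is then, by definition, the number of partitions of $n$ into parts from the stated residue classes, which is exactly the right-hand side of the theorem.

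Next I would handle the left-hand side using the basis of $L_{C^{(1)}_2}(1,0,0)$ furnished by the array of specialized weighted crystals of negative roots. Each basis vector is a monomial in root vectors whose positions are recorded by the entries of the displayed array, the two colours (red/italic and bold) tracking the two crystal components, and the level $1$ difference conditions cut out exactly the admissible colored partitions that index the basis. Under the $(2,1,1)$-specialization the total weight of such a monomial is the integer $n$ obtained by summing the array values with multiplicities, so the generating function $\sum q^{n}$ over all admissible colored partitions is precisely $\ch^{(2,1,1)}L_{C^{(1)}_2}(1,0,0)$. Reading off the coefficient of $q^n$ then identifies the left-hand side of the identity with this same character coefficient, and comparing the two computations finishes the proof.

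The genuinely delicate point is the bookkeeping that links the array to the specialization: I must check that the numbers appearing in the array are exactly the $(2,1,1)$-specialized energies of the corresponding negative root vectors, so that the colored-partition statistic really is the degree $n$ tracked by $q$, and that the difference conditions are the combinatorial image of the defining relations of the basis. Granting the basis (established by the general machinery recalled above) and Wakimoto's formula, the remaining work is the explicit product simplification to the modulus-$16$ form; this is routine but must be carried out carefully, since any slip in the surviving residue classes would break the claimed equality.
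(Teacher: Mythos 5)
Your proposal is correct and follows essentially the same route as the paper: both sides are identified with the coefficient of $q^n$ in $\ch^{(2,1,1)}L_{C^{(1)}_2}(1,0,0)$, the product side via Wakimoto's $(2,1,\dots,1)$-formula (the paper's Theorem \ref{T: Wakimotos formula for 21...1}, specialized to $l=2$, $k=1$, which simplifies to the stated modulus-$16$ product), and the partition side via the combinatorial basis of the basic module indexed by admissible colored partitions on the specialized array $\mathcal{N}_{C_2^{(1)}}^{(2,1,1)}$ (the $k=1$ case of Conjecture \ref{C: conjecture}, which the paper, like you, imports from \cite{PS1, DK, R} rather than reproving). The bookkeeping you flag as delicate is exactly what Sections \ref{section_05}--\ref{section_06} of the paper carry out.
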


This paper, and the above Theorem in particular, was motivated by the results on Poincar\'{e}--Birkhoff--Witt-type bases for standard modules over $C_l^{(1)}$ in \cite{CMPP,PS1,PS2}.
It is organized as follows. Sections \ref{section_02}--\ref{section_04} serve as an introduction. In Section \ref{section_02}, we provide some preliminary definitions and results on the affine Kac--Moody Lie algebras and their representation theory; see  \cite{Kac}  for more details. Next, in Section \ref{section_03}, we recall the specialized character formulas of Lepowsky \cite{Lepowsky} and Wakimoto \cite{W1}. Finally, in Section \ref{section_04}, we consider certain weighted crystals which arise as connected components of tensor squares of vector representation crystals associated with the complex simple Lie algebras of types $B_l$ and $C_l$.
An introduction to the theory of crystal bases, which, in particular, contains all   notions and results  used in this manuscript, can be found in \cite{HK}.

In Section \ref{section_05}, we use the   weighted crystals from Section \ref{section_04} to explicitly describe the arrays of negative root vectors for the affine Lie algebras of type $C_l^{(1)}$,  $D_{l+1}^{(2)}$ and $A_{2l}^{(2)}$. By assigning   degrees to the negative Chevalley generators  with respect to   certain specializations, in Section \ref{section_06},
we turn these arrays into tables of integers, which we refer to as specialized weighted arrays. Moreover, we express their generating functions in terms of product formulas. Finally, in Section \ref{section_07}, we employ the  product formulas to establish a connection with the aforementioned specialized character formulas of Lepowsky   and Wakimoto.

In Section \ref{section_08}, we use the generating functions from Section \ref{section_06} to generalize Borcea's correspondence  \cite{B} between specialized characters of certain standard modules for the affine Lie algebras $C_l^{(1)}$ and $A_{2l}^{(2)}$ to an arbitrary positive integer level. At the end, in Section \ref{section_09}, we introduce the notion of {\em level $k$-difference conditions}  for colored partitions on arrays connected with a basis of negative root vectors. Using this we recall a conjecture by \cite{CMPP} on combinatorial identities, that served as one of the motivations for the present paper.

\section{Affine Lie algebras of type $C_l^{(1)}$,  $D_{l+1}^{(2)}$ and $A_{2l}^{(2)}$}\label{section_02}

\subsection{Realization of affine Lie algebras}

Let $\mathfrak{g}$ be a complex simple Lie algebra of type $X_{l}$ and let $\mu$ be a diagram automorphism  of $\mathfrak{g}$ of order $r \  (= 1,2,\text{or\ }3)$,
see \cite[Section 7.9]{Kac}.
Let $\langle\cdot,\cdot\rangle$ be an invariant symmetric bilinear form on $\mathfrak{g}$, normalized so that the square length of a long root is 2.
The untwisted affine Lie algebra of type $X_l^{(1)}$ can then be realized as
$$
\mathfrak{g}(X_l^{(1)}) = \mathfrak{g}\otimes\mathbb{C}[t,t^{-1}] \oplus \mathbb{C}c \oplus \mathbb{C}d
$$
with the following commutation relations: denote by $x(n):=x\otimes t^n$ for $x\in\mathfrak{g}, n\in\mathbb{Z}$, then define 
\begin{equation*}
	\begin{aligned}
&[x(m), y(n)] = [x,y](m+n) + m\delta_{m+n,0}\langle x,y \rangle c,\cr
&c\in\operatorname{center}(\mathfrak{g}(X_l^{(1)})),\cr
&[d,x(n)] = nx(n),
\end{aligned}
\end{equation*}
for $x,y \in\mathfrak{g}$, $m,n\in\mathbb{Z}$.

Following \cite{Kac} we shall realize the (twisted) affine Lie algebra of type $X_{l}^{(r)}$ as a subalgebra of the affine Lie algebra $\mathfrak{g}(X_l^{(1)})$.  
For $j\in\mathbb{Z}$ we set $\bar j = j+ r\mathbb{Z}\in\mathbb{Z}/r\mathbb{Z}$ and let $\mathfrak{g}_{\bar j}$ denote the eigenspace of $\mu$ on $\mathfrak{g}$ 
with eigenvalue $e^{2\pi ij/r}$. Define
$$
\mathcal{L}(\mathfrak{g}, \mu) = \bigoplus_{j\in\mathbb{Z}} \mathfrak{g}_{\bar j} \otimes t^j \oplus \mathbb{C}c.
$$
Then $\mathcal{L}(\mathfrak{g}, \mu)$ is a realization of the (twisted) affine Lie algebra of type $X_l^{(r)}$, see \cite[Theorem 8.3]{Kac}.


\subsection{Affine Lie algebras of type $C_l^{(1)}$,  $D_{l+1}^{(2)}$ and $A_{2l}^{(2)}$}

We denote the Kac--Moody Lie algebra  with generalized Cartan matrix $A$ of type $X_l^{(r)}$ as $\mathfrak{g}(X_l^{(r)})$. In this paper we are interested in three types of affine Lie algebras with Dynkin diagrams in Figure \ref{Dynkin diagrams}.
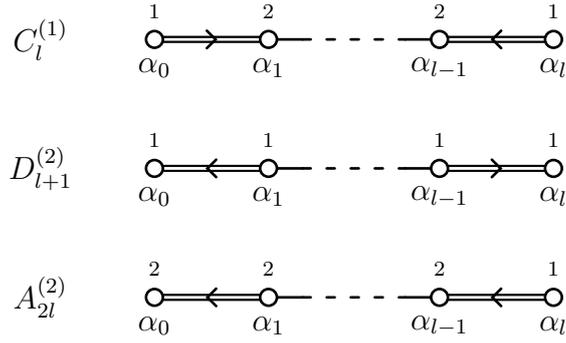
\begin{figure}[h!]
\begin{tikzpicture}[line cap=round,line join=round,>=triangle 45,x=1.50cm,y=1.50cm]
\clip(-0.5,-2.03) rectangle (5.0,1.75);

\draw (0.0,1.0) node {$C_{l}^{(1)}$};

\draw [line width=1.pt] (1.,1.020)-- (2.,1.020);
\draw [line width=1.pt] (1.,0.980)-- (2.,0.980);
\draw [line width=1.pt,color=white,postaction={decorate},decoration={markings,
		mark=at position 0.50 with {\arrow[black,xshift=2.25pt,arrow head=4.5pt]{angle 90 new}}}] (1.,1.)-- (2.,1.);
  
\draw [line width=1.pt] (2.,1.)-- (2.3,1.);
\draw [line width=1.pt,loosely dashed] (2.3,1.)-- (3.2,1.);
\draw [line width=1.pt] (3.2,1.)-- (3.5,1.);

\draw [line width=1.pt] (3.5,1.020)-- (4.5,1.020);
\draw [line width=1.pt] (3.5,0.980)-- (4.5,0.980);
\draw [line width=1.pt,color=white,postaction={decorate},decoration={markings,
		mark=at position 0.50 with {\arrow[black,xshift=2.25pt,arrow head=4.5pt]{angle 90 new reversed}}}] (3.5,1.)-- (4.5,1.);

\draw [line width=1.pt,fill=white] (1.,1.) circle (3.15pt);
\draw [line width=1.pt,fill=white] (2.,1.) circle (3.15pt);
\draw [line width=1.pt,fill=white] (3.5,1.) circle (3.15pt);
\draw [line width=1.pt,fill=white] (4.5,1.) circle (3.15pt);

\draw (1.0,1.0+0.25) node {$\scriptstyle 1$};
\draw (2.0,1.0+0.25) node {$\scriptstyle 2$};
\draw (3.5,1.0+0.25) node {$\scriptstyle 2$};
\draw (4.5,1.0+0.25) node {$\scriptstyle 1$};
\draw (1.0,1.0-0.25) node {$\alpha_0$};
\draw (2.0,1.0-0.25) node {$\alpha_1$};
\draw (3.5,1.0-0.25) node {$\alpha_{l-1}$};
\draw (4.5,1.0-0.25) node {$\alpha_l$};

\begin{scope}[yshift={-1.71cm}]

\draw (0.0,1.0) node {$D_{l+1}^{(2)}$};

\draw [line width=1.pt] (1.,1.020)-- (2.,1.020);
\draw [line width=1.pt] (1.,0.980)-- (2.,0.980);
\draw [line width=1.pt,color=white,postaction={decorate},decoration={markings,
		mark=at position 0.50 with {\arrow[black,xshift=2.25pt,arrow head=4.5pt]{angle 90 new reversed}}}] (1.,1.)-- (2.,1.);
  
\draw [line width=1.pt] (2.,1.)-- (2.3,1.);
\draw [line width=1.pt,loosely dashed] (2.3,1.)-- (3.2,1.);
\draw [line width=1.pt] (3.2,1.)-- (3.5,1.);

\draw [line width=1.pt] (3.5,1.020)-- (4.5,1.020);
\draw [line width=1.pt] (3.5,0.980)-- (4.5,0.980);
\draw [line width=1.pt,color=white,postaction={decorate},decoration={markings,
		mark=at position 0.50 with {\arrow[black,xshift=2.25pt,arrow head=4.5pt]{angle 90 new}}}] (3.5,1.)-- (4.5,1.);

\draw [line width=1.pt,fill=white] (1.,1.) circle (3.15pt);
\draw [line width=1.pt,fill=white] (2.,1.) circle (3.15pt);
\draw [line width=1.pt,fill=white] (3.5,1.) circle (3.15pt);
\draw [line width=1.pt,fill=white] (4.5,1.) circle (3.15pt);

\draw (1.0,1.0+0.25) node {$\scriptstyle 1$};
\draw (2.0,1.0+0.25) node {$\scriptstyle 1$};
\draw (3.5,1.0+0.25) node {$\scriptstyle 1$};
\draw (4.5,1.0+0.25) node {$\scriptstyle 1$};
\draw (1.0,1.0-0.25) node {$\alpha_0$};
\draw (2.0,1.0-0.25) node {$\alpha_1$};
\draw (3.5,1.0-0.25) node {$\alpha_{l-1}$};
\draw (4.5,1.0-0.25) node {$\alpha_l$};

\end{scope}

\begin{scope}[yshift={-3.42cm}]

\draw (0.0,1.0) node {$A_{2l}^{(2)}$};

\draw [line width=1.pt] (1.,1.020)-- (2.,1.020);
\draw [line width=1.pt] (1.,0.980)-- (2.,0.980);
\draw [line width=1.pt,color=white,postaction={decorate},decoration={markings,
		mark=at position 0.50 with {\arrow[black,xshift=2.25pt,arrow head=4.5pt]{angle 90 new reversed}}}] (1.,1.)-- (2.,1.);
  
\draw [line width=1.pt] (2.,1.)-- (2.3,1.);
\draw [line width=1.pt,loosely dashed] (2.3,1.)-- (3.2,1.);
\draw [line width=1.pt] (3.2,1.)-- (3.5,1.);

\draw [line width=1.pt] (3.5,1.020)-- (4.5,1.020);
\draw [line width=1.pt] (3.5,0.980)-- (4.5,0.980);
\draw [line width=1.pt,color=white,postaction={decorate},decoration={markings,
		mark=at position 0.50 with {\arrow[black,xshift=2.25pt,arrow head=4.5pt]{angle 90 new reversed}}}] (3.5,1.)-- (4.5,1.);

\draw [line width=1.pt,fill=white] (1.,1.) circle (3.15pt);
\draw [line width=1.pt,fill=white] (2.,1.) circle (3.15pt);
\draw [line width=1.pt,fill=white] (3.5,1.) circle (3.15pt);
\draw [line width=1.pt,fill=white] (4.5,1.) circle (3.15pt);

\draw (1.0,1.0+0.25) node {$\scriptstyle 2$};
\draw (2.0,1.0+0.25) node {$\scriptstyle 2$};
\draw (3.5,1.0+0.25) node {$\scriptstyle 2$};
\draw (4.5,1.0+0.25) node {$\scriptstyle 1$};
\draw (1.0,1.0-0.25) node {$\alpha_0$};
\draw (2.0,1.0-0.25) node {$\alpha_1$};
\draw (3.5,1.0-0.25) node {$\alpha_{l-1}$};
\draw (4.5,1.0-0.25) node {$\alpha_l$};

\end{scope}

\end{tikzpicture}
\caption{Dynkin diagrams of types $C_{l}^{(1)}$,  $D_{{l}+1}^{(2)}$ and $A_{2{l}}^{(2)}$}
\label{Dynkin diagrams}
\end{figure}
	
	In the case $X_l = C_l$ we have
	\begin{equation}\label{E: affine g(Cl(1))}
		\mathfrak{g}(C_l^{(1)}) = \mathbb{C}c \oplus \mathbb{C}d \oplus \coprod_{j\in\mathbb{Z}}\mathfrak{g}(C_l) \otimes t^j.
	\end{equation}

	In the case $X_l = D_{l+1}$, the fixed point subalgebra is $\mathfrak{g}(B_l)$ \cite[Proposition 7.9(b)]{Kac}, and the $-1$-eigenspace 
	is the $\mathfrak{g}(B_l)$-module $L(\omega_1)$ of highest weight $\omega_1$ (a fundamental weight) \cite[Proposition 7.9(g)]{Kac}. Hence
	\begin{equation}\label{E: affine g(Dl+1(2))}
		\mathfrak{g}(D_{l+1}^{(2)}) = \mathbb{C}c \oplus \mathbb{C}d \oplus \coprod_{j\in\mathbb{Z}}\mathfrak{g}(B_l) \otimes t^{2j} \oplus 
		\coprod_{j\in\mathbb{Z}} L_{B_l}(\omega_1) \otimes t^{2j+1}.
	\end{equation}
	
	In the case $X_l = A_{2l}$, the fixed point subalgebra is $\mathfrak{g}(B_l)$ \cite[Proposition 7.10(b)]{Kac}, and the $-1$-eigenspace is 
	the $\mathfrak{g}(B_l)$-module $L(2\omega_1)$ of highest weight $2\omega_1$ \cite[Proposition 7.10(g)]{Kac}. Hence
	\begin{equation}\label{E: affine g(A2l(2))}
		\mathfrak{g}(A_{2l}^{(2)}) = \mathbb{C}c \oplus \mathbb{C}d \oplus \coprod_{j\in\mathbb{Z}}\mathfrak{g}(B_l) \otimes t^{2j} \oplus 
		\coprod_{j\in\mathbb{Z}} L_{B_l}(2\omega_1) \otimes t^{2j+1}.
	\end{equation}
	
	Fix a Cartan subalgebra $\mathfrak{h}$ in $\mathfrak{g}_{\bar 0}$, with $\mathfrak{g}_{\bar 0}$ the fixed point subalgebra of $\mu$ on
	 $\mathfrak{g}(X_l)$ as above. For a root vector $b = x\otimes t^{i}\in \mathfrak{g}(X_l^{(r)})$ we define the $\wt_\mathfrak{h}(b)$ 
	 as the weight of $x$ with respect to $\mathfrak{h}$ i.e. $\wt_\mathfrak{h}(b) = \nu$ if
	 $$
	 [h,x] = \nu(h) x
	 $$
	for all $h\in \mathfrak{h}$. By identifying $\mathfrak{h}^*$ with the subspace of $(\mathfrak{h} \oplus \mathbb{C}c \oplus \mathbb{C}d)^*$ 
	of weights that take the value $0$ on $c$ and $d$, we then have
	$$
	\wt(b) = \wt_{\mathfrak{h}}(b) + i\delta. 
	$$

For affine Lie algebras of the types $C_l^{(1)}$,  $D_{l+1}^{(2)}$ and $A_{2l}^{(2)}$, $l\geq 2$, we have the following relations for the imaginary root $\delta$ and the canonical central element $c$ of the affine Lie algebras (see \cite{Kac}):
$$
\begin{aligned}
C_{l}^{(1)}\colon\qquad &\delta=\alpha_0+2\alpha_1+\dots+2\alpha_{l-1}+\alpha_l,\quad c=h_0+h_1+\dots+h_{l-1}+h_l,\\
D_{l+1}^{(2)}\colon\qquad &\delta=\alpha_0+\alpha_1+\dots+\alpha_{l-1}+\alpha_l,\quad c=h_0+2h_1+\dots+2h_{l-1}+h_l,\\
A_{2l}^{(2)}\colon\qquad &\delta=2\alpha_0+2\alpha_1+\dots+2\alpha_{l-1}+\alpha_l,\quad c=h_0+2h_1+\dots+2h_{l-1}+2h_l.\\
\end{aligned}
$$

     \subsection{Standard modules and the Weyl--Kac character formula}

Let $A$ be a symmetrizable generalized Cartan matrix and let $\mathfrak{g}(A)$ be the associated Kac--Moody Lie algebra, see \cite[Section 1.3]{Kac}. Let $h_i, e_i, f_i$ be the usual Kac--Moody generators and $\{\alpha_1,\dots,\alpha_l\}$ the set of simple roots. The root system $\Delta$ then decomposes
$$
\Delta = \Delta^+ \cup -\Delta^+
$$
where $\Delta^+ = \Delta \cap \{\sum_i \mathbb{Z}_{\ge 0}\alpha_i\}$. With
$$
\mathfrak{n}_{+} = \bigoplus_{\alpha\in\Delta^+} \mathfrak{g}_\alpha, \qquad \mathfrak{n}_{-} = \bigoplus_{\alpha\in-\Delta^+} \mathfrak{g}_\alpha
$$
one then has the triangular decomposition
$$
\mathfrak{g}(A) = \mathfrak{n}_{+} \oplus \mathfrak{h} \oplus \mathfrak{n}_{-}, 
$$
where $\mathfrak{h} = \bigoplus_i \mathbb{C} h_i$ is the Cartan subalgebra. 

Let $\Lambda\in\mathfrak{h}^*$ be a dominant integral weight i.e. $\Lambda(h_i)\in\mathbb{Z}_{\ge 0}$ for all $i$.
The \emph{standard module} $L(\Lambda)$ is then the unique up to isomorphism irreducible highest weight $\mathfrak{g}(A)$-module with highest weight $\Lambda$ i.e. $L(\Lambda)$ contains a nonzero vector
$v_\Lambda$ of weight $\Lambda$, which is annihilated by $\mathfrak{n}_+$, and generates $L(\Lambda)$ as a $\mathfrak{g}(A)$-module, cf. \cite[Section 9.3]{Kac}. 

The Weyl group $W$ of $\mathfrak{g}(A)$ is the group generated by the simple reflections $r_{\alpha_i}$. The length $\ell(w)$ of an element $w\in W$ is the minimal $\ell$ such that $w$ has an expression $w = r_{\alpha_{i_1}}\cdots r_{\alpha_{i_\ell}}$ as a product of simple reflections. Fix an element $\rho\in\mathfrak{h}^*$ such that $\rho(h_i) = 1$ for all $i$.

To formulate the Weyl--Kac character formula we also need a certain ring of formal series. Consider formal series of the form $s = \sum_{\mu\in\mathfrak{h}^*}c_\mu e^\mu$, $c_\mu\in\mathbb{C}$. The \emph{support} of $s$ is then $\{\mu; c_\mu \neq 0\}$. The set of series with support contained in a finite union of sets $D(\lambda)$, where $D(\lambda) = \lambda + \sum_i \mathbb{Z}_{\ge 0}(-\alpha_i)$, forms a ring under the multiplication determined by $e^\lambda e^\mu = e^{\lambda + \mu}$. Formula \eqref{WKcharacter} below takes place in this ring.

Define the \emph {character} of $L(\Lambda)$ by
$$
\operatorname{ch}L(\Lambda) = \sum_{\mu\in\mathfrak{h}^*} \dim(L(\Lambda)_\mu) e^\mu,
$$
which is a series of the above form.

\begin{theorem} [{Weyl--Kac character formula \cite[Theorem 10.4]{Kac}}] 
	The character of the standard module $L(\Lambda)$ satisfies
	\begin{equation}\label{WKcharacter}
		\operatorname{ch}L(\Lambda) = \frac{\sum_{w\in W} (-1)^{\ell(w)}e^{w(\Lambda + \rho)-\rho}}{\prod_{\alpha\in\Delta^+}(1-e^{-\alpha})^{\dim(\mathfrak{g}_{-\alpha})}}.
		\end{equation}
	\end{theorem}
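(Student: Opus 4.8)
The plan is to follow the standard Harish-Chandra--Kac approach via the generalized Casimir operator, working throughout in the ring of formal series introduced above. First I would assemble the necessary ingredients: the standardized symmetric invariant bilinear form $(\cdot\,|\,\cdot)$ on $\mathfrak{g}(A)$ of \cite[Ch.~2]{Kac}, its induced form on $\mathfrak{h}^*$, and the generalized Casimir operator $\Omega$. The key preliminary computation is that $\Omega$ acts on any highest weight module of highest weight $\mu$ by the scalar $(\mu+\rho\,|\,\mu+\rho)-(\rho\,|\,\rho)$; this is what will let eigenvalue considerations separate weights lying on different ``spheres'' $(\mu+\rho\,|\,\mu+\rho)=\text{const}$.

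Next I would record the Verma module character. Since $M(\mu)=U(\mathfrak{g}(A))\otimes_{U(\mathfrak{n}_+\oplus\mathfrak{h})}\mathbb{C}_\mu$ is free over $U(\mathfrak{n}_-)$, the Poincar\'e--Birkhoff--Witt theorem gives $\operatorname{ch}M(\mu)=e^\mu\big/\prod_{\alpha\in\Delta^+}(1-e^{-\alpha})^{\dim\mathfrak{g}_{-\alpha}}$. I would then argue that, by peeling off Verma characters from the top, any module in the relevant category $\mathcal{O}$ admits an expansion into an integer combination of Verma characters; applied to $L(\Lambda)$ this yields $\operatorname{ch}L(\Lambda)=\sum_\mu c_\mu\operatorname{ch}M(\mu)$ with $c_\Lambda=1$. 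Since $\Omega$ acts on $L(\Lambda)$ by the single scalar $(\Lambda+\rho\,|\,\Lambda+\rho)-(\rho\,|\,\rho)$, every $\mu$ occurring must satisfy $\mu\le\Lambda$ and $(\mu+\rho\,|\,\mu+\rho)=(\Lambda+\rho\,|\,\Lambda+\rho)$, and a standard finiteness lemma of \cite{Kac} leaves only finitely many such $\mu$.

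The decisive step is the symmetry argument. Multiplying through by the denominator $D=\prod_{\alpha\in\Delta^+}(1-e^{-\alpha})^{\dim\mathfrak{g}_{-\alpha}}$ and then by $e^\rho$ produces $e^\rho D\cdot\operatorname{ch}L(\Lambda)=\sum_\mu c_\mu e^{\mu+\rho}$. Here the hard part is to exploit two symmetries at once: (i) because $\Lambda$ is dominant integral, $L(\Lambda)$ is integrable, each $e_i,f_i$ acts locally nilpotently, and standard $\mathfrak{sl}_2$-theory makes the multiplicities $\dim L(\Lambda)_\mu$, hence $\operatorname{ch}L(\Lambda)$, invariant under $W$; and (ii) using that each simple reflection $r_{\alpha_i}$ permutes $\Delta^+\setminus\{\alpha_i\}$ preserving multiplicities and satisfies $r_{\alpha_i}\alpha_i=-\alpha_i$, $r_{\alpha_i}\rho=\rho-\alpha_i$, a direct computation shows $e^\rho D$ is $W$-anti-invariant, i.e. $w(e^\rho D)=(-1)^{\ell(w)}e^\rho D$. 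Hence $\sum_\mu c_\mu e^{\mu+\rho}$ is $W$-anti-invariant. Combining anti-invariance with the norm constraint and $\mu\le\Lambda$ pins its support to the single $W$-orbit of $\Lambda+\rho$, forcing the coefficient of $e^{w(\Lambda+\rho)}$ to be $(-1)^{\ell(w)}c_\Lambda=(-1)^{\ell(w)}$ and killing all other terms. Dividing by $e^\rho$ gives $D\cdot\operatorname{ch}L(\Lambda)=\sum_{w\in W}(-1)^{\ell(w)}e^{w(\Lambda+\rho)-\rho}$, which is \eqref{WKcharacter}.

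I expect the genuine obstacle to be making the formal manipulations rigorous in the Kac--Moody setting, where $\Delta^+$, $W$, and the weight lattice are infinite. In particular, justifying the PBW-style expansion into Verma characters inside the completed ring, verifying that the Casimir/height test indeed admits only finitely many surviving $\mu$, and establishing the anti-invariance of $e^\rho D$ with the correct treatment of imaginary-root multiplicities all rest on the finiteness estimates proper to the category $\mathcal{O}$ of \cite{Kac}. By contrast, the $W$-invariance of $\operatorname{ch}L(\Lambda)$ via local nilpotency is comparatively routine once integrability of $L(\Lambda)$ is in hand.
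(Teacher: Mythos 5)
The paper does not prove this statement; it quotes it directly from \cite[Theorem~10.4]{Kac}. Your outline is precisely the standard Casimir-operator proof given there (Verma-character expansion constrained by the eigenvalue $(\Lambda+\rho\,|\,\Lambda+\rho)-(\rho\,|\,\rho)$, $W$-invariance of $\operatorname{ch}L(\Lambda)$ from integrability, anti-invariance of $e^{\rho}\prod_{\alpha\in\Delta^+}(1-e^{-\alpha})^{\dim\mathfrak{g}_{-\alpha}}$, and the pinning of the support to the orbit of $\Lambda+\rho$), so it is correct and coincides with the argument the paper relies on by citation.
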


	\section{Lepowsky's and Wakimoto's theorems on specializations for $C_l^{(1)}$}\label{section_03}
\noindent
Let $s = (s_0,\dots,s_l)$ be a sequence of positive integers. 
Following \cite{W1}, denote by $F_s$ the homomorphism
$$
F_s: \mathbb{C}[[e^{-\alpha_0},\dots,e^{-\alpha_l}]] \to \mathbb{C}[[q]]
$$
determined by $F_s(e^{-\alpha_j}) = q^{s_j}$, called the {\em $s$-specialization}.

Let $A$ be an affine GCM, and let
$$
\mathfrak{g}(A) = \coprod_{\alpha\in\Delta} \mathfrak{g}_\alpha \oplus \mathfrak{h} \oplus \mathbb{C}c \oplus \mathbb{C}d
$$
be the root space decomposition of $\mathfrak{g}(A)$. For $j\in\mathbb{Z}$ set
$$
\Delta_{j,s} = \{\alpha = \sum k_i\alpha_i\in \Delta\mid \sum k_is_i = j\},
$$
and using these define
$$
\mathfrak{g}_j(s;A) = \coprod_{\alpha\in\Delta_{j,s}} \mathfrak{g}_\alpha.
$$
Then
$$
\mathfrak{g}(A) = \coprod_{j\in\mathbb{Z}}\mathfrak{g}_j(s;A)
$$
is a $\mathbb{Z}$-gradation of $\mathfrak{g}(A)$, where $\mathfrak{g}_0 = \mathfrak{h}\oplus\mathbb{C}c\oplus\mathbb{C}d$.
We call this the $s$-gradation of $\mathfrak{g}(A)$. 

Following \cite{W1}, define
$$
Q(s;A) = F_s\left(\prod_{\alpha\in\Delta^+}(1-e^{-\alpha})^{\dim \mathfrak{g}_{-\alpha}}\right) = \prod_{j=1}^\infty (1-q^j)^{\dim \mathfrak{g}_j(s;A)},
$$
the $s$-specialization of the denominator in the Weyl--Kac character formula.

For $L(\Lambda)$ an integrable $\mathfrak{g}(A)$-module, we denote by
$$
\operatorname{ch}^{(s;A)} L(\Lambda) = F_s(e^{-\Lambda} \operatorname{ch} L(\Lambda))
$$
the $s$-specialized character. For $\Lambda = \sum k_i\Lambda_i$, set $s_\Lambda = (k_0, \dots,k_l)$.

\begin{theorem}[{Lepowsky's product formula \cite[Theorem 2.6.]{Lepowsky}}]
	 Let $\Lambda$ be a dominant integral weight, and set ${\bf 1} = (1,\dots,1)$. Then
$$
\operatorname{ch}^{(1,\dots,1;A)}L(\Lambda) = \frac{Q(s_\Lambda + {\bf 1};A^T)}{Q({\bf 1};A)}.
$$
\end{theorem}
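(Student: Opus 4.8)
The plan is to start from the Weyl--Kac character formula \eqref{WKcharacter}, multiply by $e^{-\Lambda}$, and apply the principal specialization $F_{\mathbf 1}$. Setting $\mu=\Lambda+\rho$ and using $w(\Lambda+\rho)-\rho-\Lambda=w\mu-\mu$, formula \eqref{WKcharacter} gives
$$
e^{-\Lambda}\operatorname{ch}L(\Lambda)=\frac{N}{\prod_{\alpha\in\Delta^+}(1-e^{-\alpha})^{\dim\mathfrak{g}_{-\alpha}}},\qquad N:=\sum_{w\in W}(-1)^{\ell(w)}e^{w\mu-\mu}.
$$
Since $F_{\mathbf 1}$ is multiplicative and the denominator is a unit in $\mathbb{C}[[q]]$, the denominator maps to $Q(\mathbf 1;A)$ by its very definition, and the theorem reduces to proving $F_{\mathbf 1}(N)=Q(s_\Lambda+\mathbf 1;A^T)$.

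To evaluate $F_{\mathbf 1}(N)$, I first note that for every $w$ the element $\mu-w\mu$ lies in the root lattice, so $\mu-w\mu=\sum_i m_i(w)\alpha_i$ and $F_{\mathbf 1}(e^{w\mu-\mu})=q^{\sum_i m_i(w)}=q^{\langle\mu-w\mu,\rho^\vee\rangle}$, where $\rho^\vee\in\mathfrak{h}$ is any element with $\langle\alpha_i,\rho^\vee\rangle=1$ for all $i$; such a $\rho^\vee$ exists since the $\alpha_i$ are linearly independent, and $w\rho^\vee-\rho^\vee$ is independent of the choice because any admissible difference is $W$-fixed. On the dual side I would invoke the duality between $\mathfrak{g}(A)$ and $\mathfrak{g}(A^T)$: identify $(\mathfrak{h}^T)^*$ with $\mathfrak{h}$ so that the simple roots $\alpha_i^T$ of $\mathfrak{g}(A^T)$ become the simple coroots $\alpha_i^\vee=h_i$, whence $\rho^T$ becomes $\rho^\vee$. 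Under this identification the canonical isomorphism $W(A^T)\cong W(A)$, $r_i^T\mapsto r_i$, preserves length and intertwines the action of $W(A^T)$ on $(\mathfrak{h}^T)^*$ with the action of $W(A)$ on $\mathfrak{h}$ given by $r_i(\lambda)=\lambda-\langle\alpha_i,\lambda\rangle\alpha_i^\vee$.

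Writing $s=s_\Lambda+\mathbf 1$ and letting $F^{A^T}_s$ denote the $s$-specialization for $\mathfrak{g}(A^T)$ (so $e^{-\alpha_i^T}\mapsto q^{s_i}$), the definition of $Q$ together with the denominator identity for $\mathfrak{g}(A^T)$ (the $\Lambda=0$ case of \eqref{WKcharacter}) gives
$$
Q(s_\Lambda+\mathbf 1;A^T)=F^{A^T}_s\Bigl(\sum_{w\in W(A^T)}(-1)^{\ell(w)}e^{w\rho^T-\rho^T}\Bigr)=\sum_{w\in W}(-1)^{\ell(w)}q^{\langle\mu,\,\rho^\vee-w\rho^\vee\rangle},
$$
because $\rho^\vee-w\rho^\vee=\sum_i n_i(w)\alpha_i^\vee$ yields $F^{A^T}_s(e^{w\rho^T-\rho^T})=q^{\sum_i n_i(w)s_i}$ with $s_i=\mu(h_i)=\langle\mu,\alpha_i^\vee\rangle$, so that $\sum_i n_i(w)s_i=\langle\mu,\rho^\vee-w\rho^\vee\rangle$. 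Matching the term indexed by $w$ in $F_{\mathbf 1}(N)$ with the term indexed by $w^{-1}$ in this sum, it remains to check the arithmetic identity $\langle\mu-w\mu,\rho^\vee\rangle=\langle\mu,\rho^\vee-w^{-1}\rho^\vee\rangle$, which follows at once from the $W$-invariance of the pairing between $\mathfrak{h}^*$ and $\mathfrak{h}$, namely $\langle w\mu,\rho^\vee\rangle=\langle\mu,w^{-1}\rho^\vee\rangle$. As $w\mapsto w^{-1}$ is a length-preserving bijection of $W$, summing over $w$ yields $F_{\mathbf 1}(N)=Q(s_\Lambda+\mathbf 1;A^T)$; equivalently, this common value is the recognizable numerator product $\prod_{\alpha\in\Delta^+}(1-q^{\langle\Lambda+\rho,\alpha^\vee\rangle})^{\dim\mathfrak{g}_{-\alpha}}$.

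The main obstacle I anticipate is the careful bookkeeping in the duality step: making the identification $(\mathfrak{h}^T)^*\cong\mathfrak{h}$ precise, verifying that $\rho^T\mapsto\rho^\vee$ and that the two Weyl-group actions genuinely coincide under $r_i^T\mapsto r_i$ with lengths preserved, and confirming that $F^{A^T}_s$ depends only on the root-lattice (coroot-lattice) part, so that the freedom in choosing $\rho^\vee$ is immaterial. Once this dictionary is in place, everything else is a formal manipulation of \eqref{WKcharacter} and its $\Lambda=0$ specialization, combined with the elementary fact that principal heights are computed by pairing against $\rho^\vee$.
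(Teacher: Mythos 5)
The paper offers no proof of this theorem: it is quoted directly from Lepowsky (Theorem 2.6 of the cited reference), with only the remark that it holds for any symmetrizable GCM. Your argument is correct and is in substance Lepowsky's original one: specialize the Weyl--Kac formula, use $F_{\mathbf 1}(e^{w\mu-\mu})=q^{\langle\mu-w\mu,\rho^\vee\rangle}$, identify $(\mathfrak h^T)^*$ with $\mathfrak h$ so that the denominator identity for $\mathfrak g(A^T)$ under the $(s_\Lambda+\mathbf 1)$-specialization produces $\sum_w(-1)^{\ell(w)}q^{\langle\mu,\rho^\vee-w\rho^\vee\rangle}$, and match terms via $\langle w\mu,\rho^\vee\rangle=\langle\mu,w^{-1}\rho^\vee\rangle$ and the length-preserving bijection $w\mapsto w^{-1}$. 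The only blemish is the closing aside: the product form of the numerator should carry the root multiplicities of $\mathfrak g(A^T)$ (equivalently, coroot multiplicities), not $\dim\mathfrak g_{-\alpha}$ for $\mathfrak g(A)$ --- these differ on imaginary roots precisely in the twisted cases relevant here (e.g.\ $C_l^{(1)}$ versus $D_{l+1}^{(2)}$); since that sentence is not used in the derivation, the proof stands.
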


\begin{remark}
	In \cite{Lepowsky} this is proved more generally in the setting where $A$ is a symmetrizable GCM.
\end{remark}

The case $A = C_l^{(1)}$ gives

\begin{theorem}
	For a dominant integral weight $\Lambda$,
	\begin{equation}
		\operatorname{ch}^{(1,\dots,1;C_l^{(1)})}L(\Lambda) = \frac{Q(s_\Lambda + {\bf 1};D_{l+1}^{(2)})}{Q({\bf 1};C_l^{(1)})} 
		\end{equation}
		(cf. Theorem \ref{T: Lepowskys formula}).
\end{theorem}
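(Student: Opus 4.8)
The plan is to read off this identity as the special case $A=C_l^{(1)}$ of the general product formula of Lepowsky recalled above, so that the whole argument reduces to the matrix identity
$$
\bigl(C_l^{(1)}\bigr)^{T}=D_{l+1}^{(2)},
$$
understood with the labelling $0,1,\dots,l$ of the nodes fixed once and for all as in Figure \ref{Dynkin diagrams}. Granting this, substitution of $A=C_l^{(1)}$ into $\operatorname{ch}^{(1,\dots,1;A)}L(\Lambda)=Q(s_\Lambda+{\bf 1};A^{T})/Q({\bf 1};A)$ gives
$$
\operatorname{ch}^{(1,\dots,1;C_l^{(1)})}L(\Lambda)=\frac{Q(s_\Lambda+{\bf 1};(C_l^{(1)})^{T})}{Q({\bf 1};C_l^{(1)})},
$$
and since $Q(s;A)$ depends on $A$ only through the matrix itself — namely through the graded dimensions $\dim\mathfrak{g}_j(s;A)$ of $\mathfrak{g}(A)$ — the matrix identity turns the numerator into $Q(s_\Lambda+{\bf 1};D_{l+1}^{(2)})$ and the assertion follows at once.

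To establish the matrix identity I would avoid writing out the full Cartan matrices and instead compare marks and comarks. Transposing a generalized Cartan matrix interchanges its right and left null vectors, hence interchanges the marks $a_i$, defined by $\delta=\sum_i a_i\alpha_i$, with the comarks $a_i^{\vee}$, defined by $c=\sum_i a_i^{\vee}h_i$. Reading these off the expressions for $\delta$ and $c$ listed in Section \ref{section_02}, the matrix $C_l^{(1)}$ has marks $(1,2,\dots,2,1)$ and comarks $(1,\dots,1)$; hence $(C_l^{(1)})^{T}$ has marks $(1,\dots,1)$ and comarks $(1,2,\dots,2,1)$, which are exactly the marks and comarks of $D_{l+1}^{(2)}$. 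Since transposition preserves affine type and leaves the underlying Dynkin graph unchanged, $(C_l^{(1)})^{T}$ is again an affine matrix on the linear chain carrying a double bond at each end, so it must be one of the three types $C_l^{(1)}$, $D_{l+1}^{(2)}$, $A_{2l}^{(2)}$ of Figure \ref{Dynkin diagrams}; as these are pairwise distinguished by their marks $(1,2,\dots,2,1)$, $(1,\dots,1)$, $(2,\dots,2,1)$, the computed marks $(1,\dots,1)$ single out $D_{l+1}^{(2)}$.

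The step demanding the most care is the bookkeeping of labels: because $s_\Lambda=(k_0,\dots,k_l)$, and thus $s_\Lambda+{\bf 1}$, is read node by node, I must make sure the identification $(C_l^{(1)})^{T}=D_{l+1}^{(2)}$ acts as the identity on the index set $\{0,1,\dots,l\}$, so that the entry $k_i+1$ is attached to the node of $D_{l+1}^{(2)}$ genuinely corresponding to node $i$ of $C_l^{(1)}$, with no reindexing of the specialization vector. Keeping the labelling of Figure \ref{Dynkin diagrams} fixed throughout the marks--comarks comparison is precisely what guarantees this, and with it in hand the substitution above is immediate.
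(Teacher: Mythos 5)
Your proposal is correct and takes essentially the same route as the paper: the theorem is obtained by specializing Lepowsky's product formula at $A=C_l^{(1)}$ together with the identity $\bigl(C_l^{(1)}\bigr)^{T}=D_{l+1}^{(2)}$ (with the node labelling of Figure \ref{Dynkin diagrams} preserved), which the paper invokes without comment. Your marks--comarks verification of that matrix identity, read off from the expressions for $\delta$ and $c$ in Section \ref{section_02}, is a sound and more explicit justification of the one step the paper leaves implicit.
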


Let $(k_0,\dots,k_l)$ be the coordinates of a dominant integral weight $\Lambda = \sum k_i\Lambda_i$.
Set
 $$
\phi(q) = \prod_{j=1}^\infty (1-q^j).
$$

\begin{theorem} [{\cite[Section 4]{W1}}] The following specializations of characters of $\mathfrak{g}(C_l^{(1)})$-modules hold:
	
\begin{equation}\label{W83_1}
\operatorname{ch}^{(2,1,\dots,1;C_l^{(1)})}L(\Lambda) = \phi(q)^{-l} Q(k_l+1,\dots,k_1+1,2(k_0+1);A_{2l}^{(2)}),
\end{equation}
(cf. Theorem \ref{T: Wakimotos formula for 21...1}),

\begin{equation}\label{W83_2}
\operatorname{ch}^{(1,\dots,1,2;C_l^{(1)})}L(\Lambda) = \phi(q)^{-l} Q(k_0+1,\dots,k_{l-1}+1,2(k_l+1);A_{2l}^{(2)}),
\end{equation}
(cf. Theorem \ref{T: Wakimotos formula for 1...12}),

\begin{equation}\label{W83_3}
\begin{split}
&\operatorname{ch}^{(2,1,\dots,1,2;C_l^{(1)})}L(\Lambda) \cr
&= \frac{Q(2(k_0+1),k_1+1,\dots,k_{l-1}+1,2(k_l+1);C_{l}^{(1)})}{Q(2,1,\dots,1,2;C_l^{(1)})},
\end{split}
\end{equation}
(cf. Theorem \ref{T: Wakimotos formula for 21...12}),

\begin{equation}\label{W83_4}
\begin{split}
&\operatorname{ch}^{(s; C_l^{(1)})}L(\sum_{i=0}^{l-1}(2n-1)\Lambda_i + (n-1)\Lambda_l) \cr
&= Q(ns_l,2ns_{l-1},\dots,2ns_0;A_{2l}^{(2)})/Q(s;C_l^{(1)}),
\end{split}
\end{equation}
(cf. Theorem \ref{T: Wakimotos formula for 2n-1,...,2n-1,n-1}),

\begin{equation}\label{W83_5}
\begin{split}
&\operatorname{ch}^{(s; C_l^{(1)})}L((n-1)\Lambda_0 +\sum_{i=1}^{l}(2n-1)\Lambda_i) \cr
&= Q(ns_0,2ns_1,\dots,2ns_l;A_{2l}^{(2)})/Q(s;C_l^{(1)}),
\end{split}
\end{equation}
(cf. Theorem \ref{T: Wakimotos formula for n-1,2n-1,...,2n-1}),

\begin{equation}\label{W83_6}
\begin{split}
&\operatorname{ch}^{(s; C_l^{(1)})}L((n-1)\Lambda_0 +\sum_{i=1}^{l-1}(2n-1)\Lambda_i + (n-1)\Lambda_l) \cr
&= Q(ns_0,2ns_1,\dots,2ns_{l-1},ns_l;D_{l+1}^{(2)})/Q(s;C_l^{(1)}),
\end{split}
\end{equation}
(cf. Theorem \ref{T: Wakimotos formula for n-1,2n-1,...,2n-1,n-1}).
\end{theorem}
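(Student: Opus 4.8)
The plan is to obtain all six identities from the Weyl--Kac formula \eqref{WKcharacter} by the specialization technique underlying Lepowsky's and Wakimoto's theorems. Applying $F_s$ to \eqref{WKcharacter} and invoking the definition of $Q(s;A)$, one gets $\operatorname{ch}^{(s;A)}L(\Lambda) = N_s(\Lambda)/Q(s;A)$, where $N_s(\Lambda) := F_s\big(\sum_{w\in W}(-1)^{\ell(w)}e^{w(\Lambda+\rho)-\rho-\Lambda}\big)$ is the specialized numerator. Writing $\Lambda=\sum k_i\Lambda_i$, so that $\Lambda+\rho=\sum(k_i+1)\Lambda_i$, already exposes the uniform shift by ${\bf 1}$ visible in the arguments of $Q$ on the right. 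Since $F_s(e^{-\sum c_i\alpha_i})=q^{\sum c_i s_i}$, the task is to evaluate the $s$-weighted-height generating function $N_s(\Lambda)=\sum_{w\in W}(-1)^{\ell(w)}q^{\langle s,\,w(\Lambda+\rho)-(\Lambda+\rho)\rangle}$ as an explicit infinite product.

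The structural input is the factorization $W=\overline W\ltimes T$ of the affine Weyl group, with $\overline W$ the finite Weyl group of $C_l$ and $T$ the translation lattice. Writing $w=\overline w\,t_\beta$ and using the affine translation action on $\mathfrak{h}^*$, the sum $N_s(\Lambda)$ reorganizes as $\sum_{\overline w\in\overline W}(-1)^{\ell(\overline w)}\Theta_{\overline w}$, where each $\Theta_{\overline w}$ is a theta-type lattice sum attached to the level $k+h^\vee$ of $\Lambda+\rho$. Under $F_s$ each theta sum becomes an infinite $q$-product by the Jacobi triple product identity, and the alternating sum over $\overline W$ collapses, via the affine Macdonald (denominator) identity, to a single product $\prod_{j\ge1}(1-q^j)^{m_j}$. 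The assertion of each formula is that this product coincides with $Q(s';A')$ for the indicated dual-type algebra $A'\in\{C_l^{(1)},D_{l+1}^{(2)},A_{2l}^{(2)}\}$ and a shifted--rescaled specialization $s'$, up to the factor $\phi(q)^{-l}$ present in \eqref{W83_1}--\eqref{W83_2}.

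I would then organize the six identities into two dual families. In \eqref{W83_1}--\eqref{W83_3} the specialization $s$ is fixed (to $(2,1,\dots,1)$, $(1,\dots,1,2)$, $(2,1,\dots,1,2)$) while $\Lambda$ is arbitrary, so the free data sit in the arguments of $Q$ as $k_i+1$; in \eqref{W83_4}--\eqref{W83_6} the weight $\Lambda$ is fixed to one of the distinguished families $\sum_{i<l}(2n-1)\Lambda_i+(n-1)\Lambda_l$ and its reflections, while $s$ is arbitrary, so the free data sit in the arguments as $ns_i$ or $2ns_i$. In both families the same theta/Macdonald mechanism identifies $N_s(\Lambda)$ with a dual-type denominator: the passage from $C_l^{(1)}$ to $D_{l+1}^{(2)}$ or $A_{2l}^{(2)}$ interchanges long and short roots, and it is this interchange that rescales one end of the diagram by $2$, producing the doubled coordinates $2(k_0+1)$, $2(k_l+1)$, $2ns_0$, and so on; the reversed order of the arguments records the left--right reversal of the Dynkin diagram under transpose. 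The marks and the level balance are read off from the tables of $\delta$ and $c$ recorded in Section \ref{section_02}.

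The main obstacle is exactly this final identification: proving that the single product delivered by the Macdonald identity is \emph{identically} $Q(s';A')$ with the precise arguments claimed, keeping faithful track of the factors of $2$ from the long/short-root normalization, the uniform shift by ${\bf 1}$ from $\rho$, and the parameter $n$ in the degenerate families. Two points demand particular care. First, the theta-function manipulations and the collapse to a product must be justified for the non-principal specializations $s\neq{\bf 1}$, where $F_s$ weights the simple roots unequally. Second, one must verify that the special shapes do force the factorization: in \eqref{W83_1}--\eqref{W83_3} it is the special form of $s$ that produces the factorization for every $\Lambda$, whereas in \eqref{W83_4}--\eqref{W83_6} it is the special form of $\Lambda$ that produces it for every $s$, and establishing each of these factorizations is where the representation-theoretic content of Wakimoto's argument resides.
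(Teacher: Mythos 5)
The paper gives no proof of this theorem: it is quoted verbatim from Wakimoto's preprint \cite[Section 4]{W1} (with Lepowsky's \cite{Lepowsky} as the prototype), and the surrounding sections only \emph{use} the six formulas, rewriting their right-hand sides via the specialized arrays of Sections \ref{section_05}--\ref{section_06}. So there is nothing in the paper to compare your argument against step by step; the question is whether your sketch would actually deliver the result.

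As it stands it would not, for two reasons. First, the decisive step is missing by your own admission: you reduce everything to showing that the specialized numerator $N_s(\Lambda)$ equals $Q(s';A')$ for the indicated dual algebra and shifted/rescaled specialization, and then declare this identification to be ``the main obstacle'' and ``where the representation-theoretic content of Wakimoto's argument resides'' without resolving it. That identification \emph{is} the theorem; everything before it is bookkeeping. The actual mechanism (Lepowsky's numerator formula) is that the unspecialized numerator $\sum_{w}(-1)^{\ell(w)}e^{w(\Lambda+\rho)-(\Lambda+\rho)}$ is itself an instance of the Weyl--Kac \emph{denominator} for the transposed GCM $A^{T}$ (here $C_l^{(1)\,T}=D_{l+1}^{(2)}$), evaluated at the specialization $s_{\Lambda}+\mathbf{1}$; Wakimoto's six cases then follow from coincidences between specialized denominators of $D_{l+1}^{(2)}$, $A_{2l}^{(2)}$ and $C_l^{(1)}$ for the particular $s$ (in \eqref{W83_1}--\eqref{W83_3}) or from the fact that $\Lambda+\rho$ is proportional to a dual $\rho$ for the particular $\Lambda$ (in \eqref{W83_4}--\eqref{W83_6}). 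None of this is carried out. Second, one intermediate claim is wrong as stated: you assert that ``under $F_s$ each theta sum becomes an infinite $q$-product by the Jacobi triple product identity.'' For $l\geq 2$ the individual theta sums $\Theta_{\overline w}$ attached to the rank-$l$ translation lattice do not factor; only the full alternating sum over $\overline W$ collapses to a product, and only via the Macdonald identity for the dual affine root system, which is precisely the numerator formula you have not invoked. Repairing the sketch means replacing the theta/JTP paragraph by an appeal to the numerator formula and then verifying, case by case, the denominator coincidences that produce the factors $2(k_0+1)$, $2(k_l+1)$, $ns_i$, $2ns_i$ and the reversal of arguments.
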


\section{Weighted crystals for  $C_l$ and  $B_l$}\label{section_04}

In this section, we introduce some weighted crystals for Cartan matrices of types $C_l$ and  $B_l$ and their tensor products---see \cite{HK, K90, K91, KKMMNN} for the definitions and results we use. Here we shall not distinguish properly between $U_q(\mathfrak g)$-modules $L(\lambda)$ for quantum groups and $\mathfrak g$-modules $L(\lambda)$ for simple Lie algebras---what we really want is a parametrization of the basis $\mathcal B(\lambda)$ of the $\mathfrak g$-module $L(\lambda)$ ``suggested'' by the crystal bases theory.

\subsection{Weighted crystals $\mathcal B_{C_l}(\omega_1)$ and $\mathcal B_{C_l}(\theta)$ for $C_l$, $l\geq2$}

We start with $l=2$ (see Figure \ref{F: crystal of vector representation for C2}). Kashiwara's \cite{K90, K91}   tensor product of crystals $\mathcal B_{C_2}(\omega_1)\otimes \mathcal B_{C_2}(\omega_1)$ is the union  $\mathcal B_{C_2}(\theta)\cup \mathcal B_{C_2}(\omega_2)\cup \mathcal B_{C_2}(0)$ of $C_2$-crystals (see Figure \ref{F: crystal of the tensor square of vector representation for C2}). 
\smallskip
\begin{figure}[h!]
$$\xymatrix {
 1 \ar@[blue][r]^{1} & 
2 \ar@[green][r]^{2} & 
\bar{2}\ar@[blue][r]^{1} & 
\bar{1}  
}.$$\caption{The crystal of the vector representation for $C_2$}
\label{F: crystal of vector representation for C2}
\end{figure}
\begin{figure}[h!]
$$\xymatrix {
1 \ar@[blue][r]^{1} & 
2 \ar@[green][r]^{2} & 
\bar{2}\ar@[blue][r]^{1} & 
\bar{1} & \\
11\ar@[blue][r]^{1} &
21\ar@[blue][d]^{1}\ar@[green][r]^{2} & 
\bar{2}1\ar@[blue][r]^{1} &
\bar{1}1\ar@[blue][d]^{1} &
1\ar@[blue][d]^{1}\\ 
12\ar@[green][d]^{2} &
22\ar@[green][r]^{2} & 
\bar{2}2\ar@[green][d]^{2} &
\bar{1}2\ar@[green][d]^{2} &
2\ar@[green][d]^{2} \\
1\bar{2}\ar@[blue][r]^{1} &
2\bar{2}\ar@[blue][d]^{1} & 
\bar{2}\bar{2}\ar@[blue][r]^{1} &
\bar{1}\bar{2}\ar@[blue][d]^{1} &
\bar{2}\ar@[blue][d]^{1}\\
1\bar{1} &
2\bar{1}\ar@[green][r]^{2} & 
\bar{2}\bar{1} &
\bar{1}\bar{1} &
\bar{1}      
}$$\caption{The crystal of the tensor square of vector representation for $C_2$}
\label{F: crystal of the tensor square of vector representation for C2}
\end{figure}

Note that the crystal $\mathcal B_{C_2}(\theta)$ parametrizes a weight basis of the adjoint $10$-dimensional representation $L_{C_2}(\theta)$ of the simple Lie algebra $\mathfrak{g}=\mathfrak{g}(C_2)$ of type $C_2$ with the highest weight vector $11$;  
the crystal $\mathcal B_{C_2}(\omega_2)$ parametrizes a weight basis of the $5$-dimensional representation $L_{C_2}(\omega_2)$ with highest weight vector $12$; 
the crystal $\mathcal B_{C_2}(0)=\{1\bar{1}\}$ parametrizes a weight basis of the $1$-dimensional trivial representation.

We can parametrize the weights of $\mathcal B_{C_2}(\omega_1)=\{1,2,\bar{2},\bar{1}\}$ and the root system $\Delta$ of $\mathfrak{g}$ as 
$$
\{\epsilon_1,\epsilon_2,-\epsilon_2,-\epsilon_1\}\quad\text{and}\quad \Delta=
\{\pm(\epsilon_i\pm\epsilon_j)\mid 1\leq i\leq j\leq 2\}\backslash\{0\},
$$
where, as usual, $\{\epsilon_1,\epsilon_2\}$ is the canonical basis of $\mathbb R^2$. Then $11$ is the root vector for the maximal root $\theta=2\epsilon_1$, and the root vectors $\bar{1}2$ and $\bar{2}\bar{2}$ for the negative simple roots $-\alpha_1=-\epsilon_1+\epsilon_2$ and $-\alpha_2=-2\epsilon_2$ are proportional to Chevalley generators $f_1$ and $f_2$. The elements $\bar{i}i$ are proportional to simple coroots $h_i=\alpha_i^\vee$,  $i=1,2$ in the Cartan subalgebra $\mathfrak h\subset\mathfrak g$ (cf. \cite{Bou1, Bou2, Car, Hum}). Moreover, the $i$-arrow $\overset{i}\longrightarrow$, $i=1,2$, denotes the action of the Kashiwara operator $\tilde f_i$ (which is, in this case, proportional to the action of the Chevalley generator  $f_i$, $i=1,2$). 

In Figure \ref{F: crystal of the tensor square of vector representation for C2}, the crystal for the adjoint representation has the shape of a right-angled triangle with vertices $11$, $\bar{1}1$ and $\bar{1}\bar{1}$. The hypotenuse $\{11,22,\bar{2}\bar{2}, \bar{1}\bar{1}\}$ consists of root vectors corresponding to the long roots $\{2\epsilon_1, 2\epsilon_2, -2\epsilon_2, -2\epsilon_1\}$, and the arrows on both catheti are congruent to the arrows of the crystal for the vector representation. 

For $l=3$, the crystal $\mathcal B_{C_3}(\omega_1)$ for the vector representation is shown in Figure \ref{F: crystal of vector representation for C3}.
\begin{figure}[h!]
$$\xymatrix {
1 \ar@[blue][r]^{1} & 
2 \ar@[green][r]^{2} & 
3 \ar@[red][r]^{3} & 
\bar{3} \ar@[green][r]^{2} & 
\bar{2}\ar@[blue][r]^{1} & 
\bar{1}  
}$$\caption{The crystal of the vector representation for $C_3$}
\label{F: crystal of vector representation for C3}
\end{figure}
The   tensor product of crystals $\mathcal B_{C_3}(\omega_1)\otimes \mathcal B_{C_3}(\omega_1)$ is the union  $\mathcal B_{C_3}(\theta)\cup \mathcal B_{C_3}(\omega_2)\cup \mathcal B_{C_3}(0)$ of $C_3$-crystals (see Figure \ref{F: crystal of the tensor square of vector representation for C3}). 
\begin{figure}[h!]
$$\xymatrix {
1 \ar@[blue][r]^{1} & 
2 \ar@[green][r]^{2} & 
3 \ar@[red][r]^{3} & 
\bar{3} \ar@[green][r]^{2} & 
\bar{2}\ar@[blue][r]^{1} & 
\bar{1}  &\\
11\ar@[blue][r]^{1} &
21\ar@[blue][d]^{1}\ar@[green][r]^{2} &
31\ar@[blue][d]^{1}\ar@[red][r]^{3} &
\bar{3}1\ar@[blue][d]^{1}\ar@[green][r]^{2} &
\bar{2}1\ar@[blue][r]^{1} &
\bar{1}1\ar@[blue][d]^{1} &
1\ar@[blue][d]^{1}\\
12\ar@[green][d]^{2} &
22\ar@[green][r]^{2} &
32\ar@[red][r]^{3}\ar@[green][d]^{2} &
\bar{3}2\ar@[green][r]^{2} &
\bar{2}2\ar@[green][d]^{2} &
\bar{1}2\ar@[green][d]^{2} &
2\ar@[green][d]^{2}\\
13\ar@[blue][r]^{1}\ar@[red][d]^{3} &
23\ar@[red][d]^{3} &
33\ar@[red][r]^{3} &
\bar{3}3\ar@[red][d]^{3} &
\bar{2}3\ar@[blue][r]^{1}\ar@[red][d]^{3} &
\bar{1}3\ar@[red][d]^{3} &
3\ar@[red][d]^{3}\ar@[red][d]^{3}\\
1\bar{3}\ar@[blue][r]^{1}\ar@[green][d]^{2} &
2\bar{3}\ar@[green][r]^{2} &
3\bar{3}\ar@[green][d]^{2} &
\bar{3}\bar{3}\ar@[green][r]^{2} &
\bar{2}\bar{3}\ar@[blue][r]^{1}\ar@[green][d]^{2} &
\bar{1}\bar{3}\ar@[green][d]^{2} &
\bar{3}\ar@[green][d]^{2}\\
1\bar{2}\ar@[blue][r]^{1} &
2\bar{2}\ar@[blue][d]^{1} &
3\bar{2}\ar@[blue][d]^{1}\ar@[red][r]^{3} &
\bar{3}\bar{2}\ar@[blue][d]^{1} &
\bar{2}\bar{2}\ar@[blue][r]^{1} &
\bar{1}\bar{2}\ar@[blue][d]^{1} &
\bar{2}\ar@[blue][d]^{1}\\
1\bar{1} &
2\bar{1}\ar@[green][r]^{2} &
3\bar{1}\ar@[red][r]^{3} &
\bar{3}\bar{1}\ar@[green][r]^{2} &
\bar{2}\bar{1} &
\bar{1}\bar{1} &
\bar{1}
}$$\caption{The crystal of the tensor square of vector representation for $C_3$}
\label{F: crystal of the tensor square of vector representation for C3}
\end{figure}
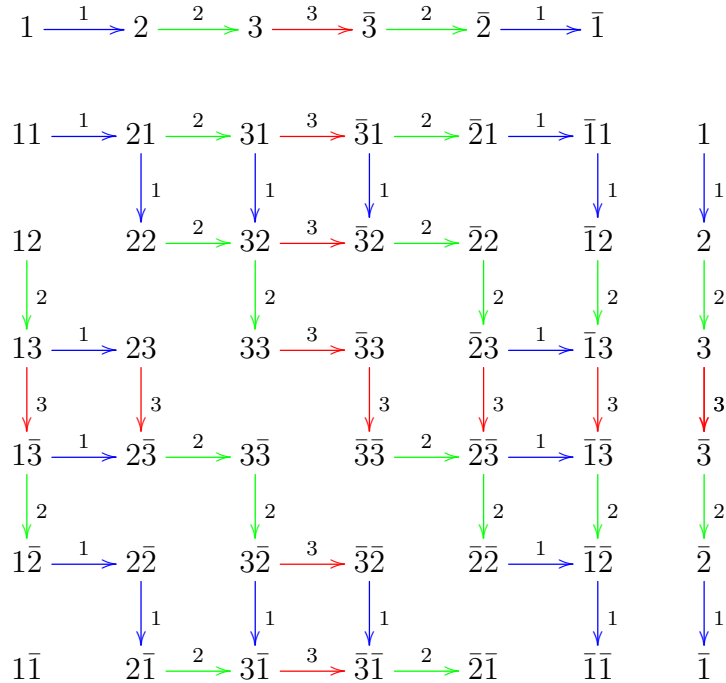
Except for more complicated crystals, we see again that the crystal $\mathcal B_{C_3}(\theta)$ for the adjoint representation has the shape of a right-angled triangle with vertices $11$, $\bar{1}1$ and $\bar{1}\bar{1}$ and that the sequences of arrows on both catheti are congruent to the sequence of arrows in the crystal $\mathcal B_{C_3}(\omega_1)$.
In general, for $l\geq 3$, we have the $C_l$-crystal $\mathcal B_{C_l}(\omega_1)$ for the vector representation (see Figure \ref{F: crystal of vector representation for Cl}) 
and the   tensor product of crystals $\mathcal B_{C_l}(\omega_1)\otimes \mathcal B_{C_l}(\omega_1)$ is the union  $\mathcal B_{C_l}(\theta)\cup \mathcal B_{C_l}(\omega_2)\cup \mathcal B_{C_l}(0)$ of $C_l$-crystals. 
The crystal $\mathcal B_{C_l}(\theta)$ for the adjoint representation of $\mathfrak g=\mathfrak g(C_l)$ of the type $C_l$ has the shape of a right-angled triangle with vertices $11$, $\bar{1}1$ and $\bar{1}\bar{1}$ and the sequence of arrows on both catheti are congruent to the sequence of arrows in the crystal $\mathcal B_{C_l}(\omega_1)$. 
\begin{figure}[h!]
$$\xymatrix {
1 \ar@[black][r]^{1} & 
2 \ar@[black][r]^{2} & 
\cdots&
\cdots \ar@[black][r]^{l-1} & 
l \ar@[black][r]^{l} & 
\bar{l} \ar@[black][r]^{l-1} & 
\cdots &  
\cdots \ar@[black][r]^{2} & 
\bar{2}\ar@[black][r]^{1} & 
\bar{1}  
}$$\caption{The crystal of the vector representation for $C_l$}
\label{F: crystal of vector representation for Cl}
\end{figure}

\subsection{Weighted crystals $\mathcal B_{B_l}(\omega_1)$ and $\mathcal B_{B_l}(\theta)$ for $B_l$, $l\geq2$}\label{ss: vector and adjoint crystals for B_ell}

We start with $l=2$ and the crystal $\mathcal B_{B_2}(\omega_1)$ for the Cartan matrix $B_2$---it is the crystal of the $5$-dimensional vector representation shown in Figure \ref{F: crystal of vector representation for B2}.
\begin{figure}[h!]
$$\xymatrix {
 1 \ar@[blue][r]^{1} & 
2 \ar@[green][r]^{2} & 
0 \ar@[green][r]^{2} & 
\bar{2}\ar@[blue][r]^{1} & 
\bar{1}.  
}$$\caption{The crystal of the vector representation for $B_2$}
\label{F: crystal of vector representation for B2}
\end{figure}
The   tensor product of crystals $\mathcal B_{B_2}(\omega_1)\otimes \mathcal B_{B_2}(\omega_1)$ is the union  $\mathcal B_{B_2}(2\omega_1)\cup \mathcal B_{B_2}(\theta)\cup \mathcal B_{B_2}(0)$ of $B_2$-crystals (see Figure \ref{F: crystal of the tensor square of vector representation for B2}). 
The crystal $\mathcal B_{B_2}(2\omega_1)$ parametrizes a weight basis of the $14$-dimensional representation $L_{B_2}(2\omega_1)$ with highest weight vector $11$; 
the crystal $\mathcal B_{B_2}(0)=\{1\bar{1}\}$ parametrizes a weight basis of the $1$-dimensional trivial representation. We analyze $\mathcal B_{B_2}(2\omega_1)$ in the next subsection.

Note that the crystal $\mathcal B_{B_2}(\theta)$ parametrizes a weight basis of the adjoint $10$-dimensional representation $L_{B_2}(\theta)$ of the simple Lie algebra $\mathfrak{g}=\mathfrak{g}(B_2)$ of type $B_2$ with highest weight vector $12$. 
By translating the diagonal points $00$ and $2\bar{2}$   down along the secondary diagonal $\left\{\bar{1}1, \bar{2}2,00,2\bar{2}, 1\bar{1}\right\} $, we
rectify this crystal to become a triangle (see Figure \ref{F: rectifying the crystal for adjoint B2 module}). Note that we did not change the sequence of arrows going down or going to the right. 
\begin{figure}[h!]
$$\xymatrix@=2em {
1 \ar@[blue][r]^{1} & 
2 \ar@[green][r]^{2} & 
0 \ar@[green][r]^{2} & 
\bar{2}\ar@[blue][r]^{1} & 
 \bar{1} & \\
11\ar@[blue][r]^{1} &
21\ar@[blue][d]^{1}\ar@[green][r]^{2} & 
01\ar@[green][r]^{2}\ar@[blue][d]^{1} &
\bar{2}1\ar@[blue][r]^{1} &
\bar{1}1\ar@[blue][d]^{1}&
1\ar@[blue][d]^{1}\\ 
12\ar@[green][d]^{2} &
22\ar@[green][r]^{2} & 
02\ar@[green][r]^{2} &
\bar{2}2\ar@[green][d]^{2} &
\bar{1}2\ar@[green][d]^{2}&
2\ar@[green][d]^{2} \\
10\ar@[blue][r]^{1}\ar@[green][d]^{2} &
20\ar@[green][r]^{2}&
00\ar@[green][d]^{2}&
\bar{2}0\ar@[blue][r]^{1}\ar@[green][d]^{2}&
\bar{1}0\ar@[green][d]^{2}&
0\ar@[green][d]^{2}\\
1\bar{2}\ar@[blue][r]^{1} &
2\bar{2}\ar@[blue][d]^{1} & 
0\bar{2}\ar@[blue][d]^{1} &
\bar{2}\bar{2}\ar@[blue][r]^{1} &
\bar{1}\bar{2}\ar@[blue][d]^{1}&
\bar{2}\ar@[blue][d]^{1}\\
1\bar{1} &
2\bar{1}\ar@[green][r]^{2} & 
0\bar{1}\ar@[green][r]^{2} &
\bar{2}\bar{1} &
\bar{1}\bar{1} &
\bar{1}      
}$$\caption{The crystal of the tensor square of vector representation for $B_2$}
\label{F: crystal of the tensor square of vector representation for B2}
\end{figure}
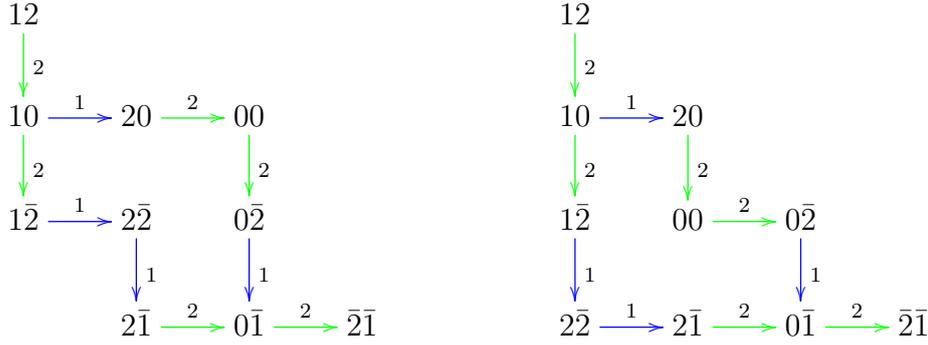
\begin{figure}[h!]
$$\xymatrix@=2em {
12\ar@[green][d]^{2} &&&&& \\
10\ar@[blue][r]^{1}\ar@[green][d]^{2} &
20\ar@[green][r]^{2}&
00\ar@[green][d]^{2}&&& \\
1\bar{2}\ar@[blue][r]^{1} &
2\bar{2}\ar@[blue][d]^{1} & 
0\bar{2}\ar@[blue][d]^{1} &&& \\
&2\bar{1}\ar@[green][r]^{2} & 
0\bar{1}\ar@[green][r]^{2} &
\bar{2}\bar{1} &&   
}
\xymatrix@=2em {
12\ar@[green][d]^{2} &&&&& \\
10\ar@[blue][r]^{1}\ar@[green][d]^{2} &
20\ar@[green][d]^{2}&&&& \\
1\bar{2}\ar@[blue][d]^{1} &
00\ar@[green][r]^{2}& 
0\bar{2}\ar@[blue][d]^{1} &&& \\
2\bar{2}\ar@[blue][r]^{1} &
2\bar{1}\ar@[green][r]^{2} & 
0\bar{1}\ar@[green][r]^{2} &
\bar{2}\bar{1} && 
}$$\caption{Rectifying the crystal for the adjoint $B_2$-module}
\label{F: rectifying the crystal for adjoint B2 module}
\end{figure}
We parametrize the weights of $\mathcal B_{B_2}(\omega_1)=\{1,2,0,\bar{2},\bar{1}\}$ and the root system $\Delta$ of $\mathfrak{g}$ as 
$$
\{\epsilon_1,\epsilon_2,0,-\epsilon_2,-\epsilon_1\}\quad\text{and}\quad \Delta=
\{ \pm(\epsilon_i\pm\epsilon_j)\mid 1\leq i< j\leq 2\}\cup \{\pm\epsilon_i\mid i=1, 2 \}.
$$
Then $12\in \mathcal B_{B_2}(\theta)$ is the root vector for the maximal root $\theta=\epsilon_1+\epsilon_2$, and the root vectors $2\bar{1}$ and $0\bar{2}$ for the negative simple roots $-\alpha_1=-\epsilon_1+\epsilon_2$ and $-\alpha_2=-\epsilon_2$ are proportional to the Chevalley generators $f_1$ and $f_2$. The elements $2\bar{2}$ and $00$ are proportional to the simple coroots $h_i=\alpha_i^\vee$,  $i=1,2$ in the Cartan subalgebra $\mathfrak h\subset\mathfrak g$ (cf. \cite{Bou1, Bou2, Car, Hum}). Moreover, the $i$-arrow, $i=1,2$, denotes the action of the Kashiwara  operator $\tilde f_i$ (which is, in this case, proportional to $f_i$). 
In Figure \ref{F: rectifying the crystal for adjoint B2 module}, the (rectified) crystal for the adjoint representation has the shape of a right-angled triangle with vertices $12$, $2\bar{2}$ and $\bar{2}\bar{1}$. The hypotenuse $\{12,20,0\bar{2}, \bar{2}\bar{1}\}$ consists of root vectors corresponding to the roots $\{\epsilon_1+\epsilon_2, \epsilon_2, -\epsilon_2, -\epsilon_1-\epsilon_2\}$. Unlike the $C_2$-case, the arrows on both catheti are not mutually congruent, nor congruent to the arrows of the crystal for the vector representation. Instead, the arrows on catheti $[12,2\bar 2]$ are congruent to the arrows on the segment $[2,\bar 1]$ of $\mathcal B_{B_2}(\omega_1)$, and the arrows on catheti $[2\bar2,\bar2\bar 1]$ are congruent to the arrows on the segment $[1,\bar 2]$ of $\mathcal B_{B_2}(\omega_1)$.

For $l=3$, the tensor product $\mathcal B_{B_3}(\omega_1)\otimes \mathcal B_{B_3}(\omega_1)$ is the union  $\mathcal B_{B_3}(2\omega_1)\cup \mathcal B_{B_3}(\theta)\cup \mathcal B_{B_3}(0)$ of $B_3$-crystals (see Figure \ref{F: crystal of the tensor square of vector representation for B3}). By translating the diagonal points $00$, $3\bar{3}$ and $2\bar{2}$ down along the secondary diagonal $\left\{\bar{1}1, \bar{2}2,\bar{3}3,00,3\bar{3},2\bar{2}, 1\bar{1}\right\} $, we rectify crystal $\mathcal B_{B_3}(\theta)$ to become a right-angled triangle with vertices $12$, $2\bar{2}$ and $\bar{2}\bar{1}$. Again the arrows on catheti $[12,2\bar 2]$ are congruent to the arrows on the segment $[2,\bar 1]$ of $\mathcal B_{B_3}(\omega_1)$, and the arrows on catheti $[2\bar2,\bar2\bar 1]$ are congruent to the arrows on the segment $[1,\bar 2]$ of $\mathcal B_{B_3}(\omega_1)$.
\begin{figure}[h!]
$$\xymatrix {
1 \ar@[blue][r]^{1} & 
2 \ar@[green][r]^{2} & 
3 \ar@[red][r]^{3} & 
0 \ar@[red][r]^{3} & 
\bar{3} \ar@[green][r]^{2} & 
\bar{2}\ar@[blue][r]^{1} & 
\bar{1}  &\\
11\ar@[blue][r]^{1} &
21\ar@[blue][d]^{1}\ar@[green][r]^{2} &
31\ar@[blue][d]^{1}\ar@[red][r]^{3} &
01 \ar@[blue][d]^{1}\ar@[red][r]^{3} & 
\bar{3}1\ar@[blue][d]^{1}\ar@[green][r]^{2} &
\bar{2}1\ar@[blue][r]^{1} &
\bar{1}1\ar@[blue][d]^{1} &
1\ar@[blue][d]^{1}\\
12\ar@[green][d]^{2} &
22\ar@[green][r]^{2} &
32\ar@[red][r]^{3}\ar@[green][d]^{2} &
02\ar@[red][r]^{3}\ar@[green][d]^{2} & 
\bar{3}2\ar@[green][r]^{2} &
\bar{2}2\ar@[green][d]^{2} &
\bar{1}2\ar@[green][d]^{2} &
2\ar@[green][d]^{2}\\
13\ar@[blue][r]^{1}\ar@[red][d]^{3} &
23\ar@[red][d]^{3} &
33\ar@[red][r]^{3} &
03 \ar@[red][r]^{3} & 
\bar{3}3\ar@[red][d]^{3} &
\bar{2}3\ar@[blue][r]^{1}\ar@[red][d]^{3} &
\bar{1}3\ar@[red][d]^{3} &
3\ar@[red][d]^{3}\ar@[red][d]^{3}\\
10\ar@[blue][r]^{1}\ar@[red][d]^{3} &
20\ar@[red][d]^{3}\ar@[green][r]^{2} &
30\ar@[red][r]^{3} &
00 \ar@[red][d]^{3} & 
\bar{3}0\ar@[red][d]^{3} \ar@[green][r]^{2}&
\bar{2}0\ar@[blue][r]^{1}\ar@[red][d]^{3} &
\bar{1}0\ar@[red][d]^{3} &
0\ar@[red][d]^{3}\ar@[red][d]^{3}\\
1\bar{3}\ar@[blue][r]^{1}\ar@[green][d]^{2} &
2\bar{3}\ar@[green][r]^{2} &
3\bar{3}\ar@[green][d]^{2} &
0\bar{3}  \ar@[green][d]^{2}& 
\bar{3}\bar{3}\ar@[green][r]^{2} &
\bar{2}\bar{3}\ar@[blue][r]^{1}\ar@[green][d]^{2} &
\bar{1}\bar{3}\ar@[green][d]^{2} &
\bar{3}\ar@[green][d]^{2}\\
1\bar{2}\ar@[blue][r]^{1} &
2\bar{2}\ar@[blue][d]^{1} &
3\bar{2}\ar@[blue][d]^{1}\ar@[red][r]^{3} &
0 \bar{2}\ar@[blue][d]^{1}\ar@[red][r]^{3} & 
\bar{3}\bar{2}\ar@[blue][d]^{1} &
\bar{2}\bar{2}\ar@[blue][r]^{1} &
\bar{1}\bar{2}\ar@[blue][d]^{1} &
\bar{2}\ar@[blue][d]^{1}\\
1\bar{1} &
2\bar{1}\ar@[green][r]^{2} &
3\bar{1}\ar@[red][r]^{3} &
0 \bar{1}\ar@[red][r]^{3} & 
\bar{3}\bar{1}\ar@[green][r]^{2} &
\bar{2}\bar{1} &
\bar{1}\bar{1} &
\bar{1}
}$$\caption{The crystal of the tensor square of vector representation for $B_3$}
\label{F: crystal of the tensor square of vector representation for B3}
\end{figure}
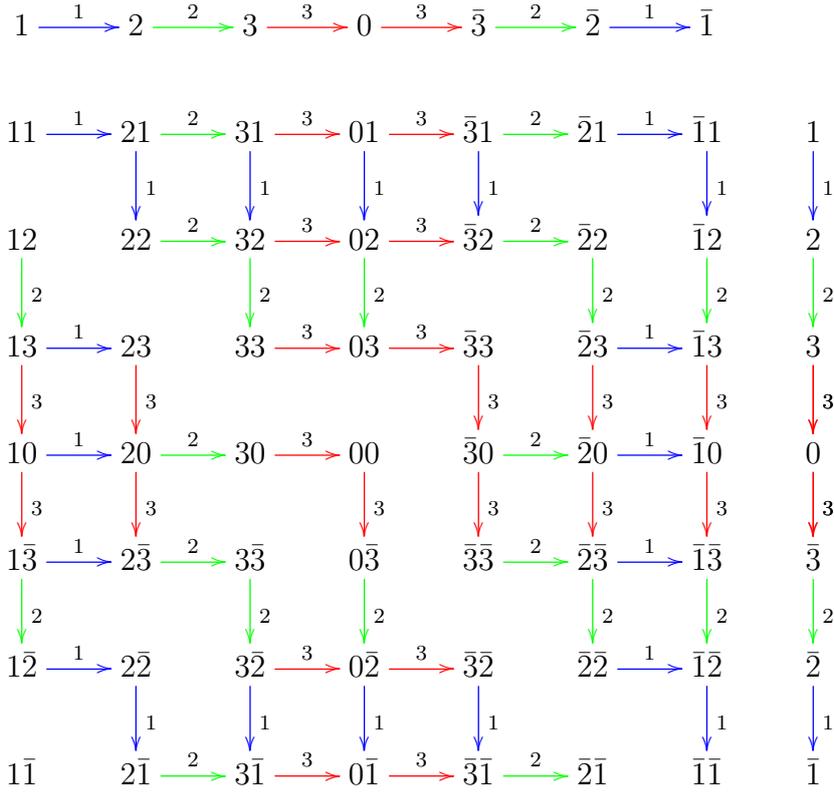
In general, for $l\geq 3$, we have the $B_l$ crystal $\mathcal B_{B_l}(\omega_1)$ for the vector representation (see Figure \ref{F: crystal of vector representation for Bl}) 
and the   tensor product of crystals $\mathcal B_{B_l}(\omega_1)\otimes \mathcal B_{B_l}(\omega_1)$ is the union  $\mathcal B_{B_l}(2\omega_1)\cup \mathcal B_{B_l}(\theta)\cup \mathcal B_{B_l}(0)$ of $B_l$-crystals. 
By translating the diagonal points diagonally down and to the left, we rectify crystal $\mathcal B_{B_l}(\theta)$ to become a right-angled triangle with vertices $12$, $2\bar{2}$ and $\bar{2}\bar{1}$. The arrows on catheti $[12,2\bar 2]$ are congruent to the arrows on the segment $[2,\bar 1]$ of $\mathcal B_{B_l}(\omega_1)$, and the arrows on catheti $[2\bar2,\bar2\bar 1]$ are congruent to the arrows on the segment $[1,\bar 2]$ of $\mathcal B_{B_l}(\omega_1)$.
\begin{figure}[h!]
$$\xymatrix {
1 \ar@[black][r]^{1} & 
2 \ar@[black][r]^{2} & 
\cdots&
\cdots \ar@[black][r]^{l-1} & 
l \ar@[black][r]^{l} & 
0 \ar@[black][r]^{l} & 
\bar{l} \ar@[black][r]^{l-1} & 
\cdots &  
\cdots \ar@[black][r]^{2} & 
\bar{2}\ar@[black][r]^{1} & 
\bar{1}  
}$$\caption{The crystal of the vector representation for $B_l$}
\label{F: crystal of vector representation for Bl}
\end{figure}

\subsection{Weighted crystals $\mathcal B_{B_l}(2\omega_1)$ and $\mathcal B_{B_l}(\theta)$ for $B_l$, $l\geq2$}\label{ss: vector and adjoint crystals for B_ell}

The crystal $\mathcal B_{B_l}(2\omega_1)$ parametrizes a weight basis of the irreducible representation $L_{B_l}(2\omega_1)$ of the simple Lie algebra $\mathfrak{g}=\mathfrak{g}(B_l)$ of type $B_l$ with the highest weight vector $11\in \mathcal B_{B_l}(\omega_1)\otimes \mathcal B_{B_l}(\omega_1)$ (see Figures \ref{F: crystal of the tensor square of vector representation for B2} and \ref{F: crystal of the tensor square of vector representation for B3} for $l=2$ and $l=3$). This crystal is a right-angled triangle with vertices $11$, $\bar{1}1$ and $\bar{1}\bar{1}$. Note that the midpoint on hypotenuse is missing (i.e. $00\in \mathcal B_{B_l}(\theta)$). The arrows on both catheti are congruent to the arrows on $\mathcal B_{B_l}(\omega_1)$.

\section{Arrays of negative root vectors for $C_l^{(1)}$, $D_{l+1}^{(2)}$ and $A^{(2)}_{2l}$}\label{section_05}

\subsection{The array of negative root vectors for $C_l^{(1)}$}

Since the crystal $\mathcal B_{C_l}(\lambda)$ parametrizes a weight basis of the $\mathfrak g(C_l)$-module $L_{C_l}(\lambda)$, from \eqref{E: affine g(Cl(1))} we get (a parametrization of) a weight basis of ${\mathfrak g}(C_l^{(1)})$
\begin{equation}\label{E: a basis of g(Cl(1))}
\{c, d\}\cup {\mathcal B}_{C_l^{(1)}}, \quad\text{where}\quad
{\mathcal B}_{C_l^{(1)}} =\bigcup_{j\in\mathbb Z}  {\mathcal B}_{C_l}(\theta)\otimes t^{j}.
\end{equation}
For $b\in {\mathcal B}_{C_l}(\theta)$ and $i\in\mathbb Z$, set $b_i=b\otimes t^i$. Note that this is a weight vector in the affine Lie algebra ${\mathfrak g}(C_l^{(1)})$ of weight 
$$
\wt(b\otimes t^i)=\wt_\mathfrak h (b)+i\delta.
$$
In each triangle ${\mathcal B}_{C_l}(\theta)\subset{\mathcal B}_{C_l}(\omega_1)\otimes{\mathcal B}_{C_l}(\omega_1)$ (see Figures \ref{F: crystal of the tensor square of vector representation for C2} and \ref{F: crystal of the tensor square of vector representation for C3} for $l=2$ and $l=3$), the $\mathfrak h$-weight changes by a negative simple root $-\alpha_i$, $i\in\{1, \dots, l\}$, when passing from one column to the next, or from one row to the next, according to the sequence of arrows in the crystal of the vector representation  in Figure \ref{F: crystal of vector representation for Cl}. We can ``glue'' the triangles ${\mathcal B}_{C_l}(\theta)\otimes t^{j}$ in ${\mathcal B}_{C_l^{(1)}}$ in such a way that this rule also holds for the negative simple root 
$$
-\alpha_0=\theta-\delta=2\epsilon_1-\delta,
$$
where ``gluing'' means that we place together the catheti 
of the triangles ${\mathcal B}_{C_2}(\theta)\otimes t^{i+1}$ and ${\mathcal B}_{C_2}(\theta)\otimes t^{i}$
along the congruent arrows (see Figures \ref{F: gluing the catheti of triangles for C2}  and  \ref{F: the crystal structure of basis for C2} for $l=2$ and Figure \ref{F: gluing the catheti of triangles for Cl} in general). In this way, ${\mathcal B}_{C_l^{(1)}}$ obtains the structure of a crystal graph (i.e. weighted oriented graph), where dashed   $0$-arrows represent the action of the Kashiwara  operator $\tilde f_0$ (or Chevalley's generator $f_0$). 
\begin{remark}
In the theory of crystal bases, the previous construction is called the affinization of ${\mathcal B}_{C_l}$ with the notation ${\mathcal B}_{C_l^{(1)}}=\bar {\mathcal B}_{C_l} = {\mathcal B}_{C_l}^\text{aff}$ (cf. \cite{HK}). Since the notion of a crystal has many layers, from a simple combinatorial notion of colored directed graph to the notion of crystal basis for a quantum group $U_q(\mathfrak g)$, for ${\mathcal B}_{C_l}$ we prefer the term {\em the array of root vectors for $C_l^{(1)}$} or {\em the arrangement of root vectors for $C_l^{(1)}$}. For us, ${\mathcal B}_{C_l^{(1)}}$ is a part of the basis $\{c, d\}\cup {\mathcal B}_{C_l^{(1)}}$ of ${\mathfrak g}(C_l^{(1)})$ and the $i$-arrows, $i=0, 1, \dots, l$, indicate how the action of Chevalley's generators $f_i$ changes the weight of a basis element, i.e.
$$
\wt(f_i b)=\wt(b)-\alpha_i,\qquad i=0, 1, \dots, l, \quad b\in{\mathcal B}_{C_l^{(1)}}.
$$ 
\end{remark}
\begin{figure}[h!]
$$\xymatrix {
 \bar{1}1_{i+1} \ar@[blue][r]^{1} \ar@{-->}@[red][d]^{0}& 
\bar{1}2_{i+1} \ar@[green][r]^{2}\ar@{-->}@[red][d]^{0} & 
\bar{1}\bar{2}_{i+1}\ar@[blue][r]^{1}\ar@{-->}@[red][d]^{0} & 
\bar{1}\bar{1}_{i+1} \ar@{-->}@[red][d]^{0} \\
11_{i} \ar@[blue][r]^{1} & 
21_{i} \ar@[green][r]^{2} & 
\bar{2}1_{i}\ar@[blue][r]^{1} & 
\bar{1}1_{i}  
}$$\caption{Gluing  the catheti of triangles in ${\mathcal B}_{C_2^{(1)}}$}
\label{F: gluing the catheti of triangles for C2}
\end{figure}
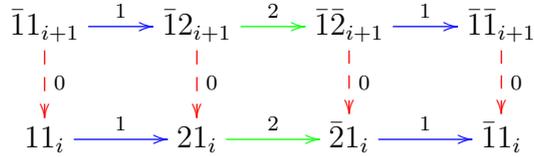
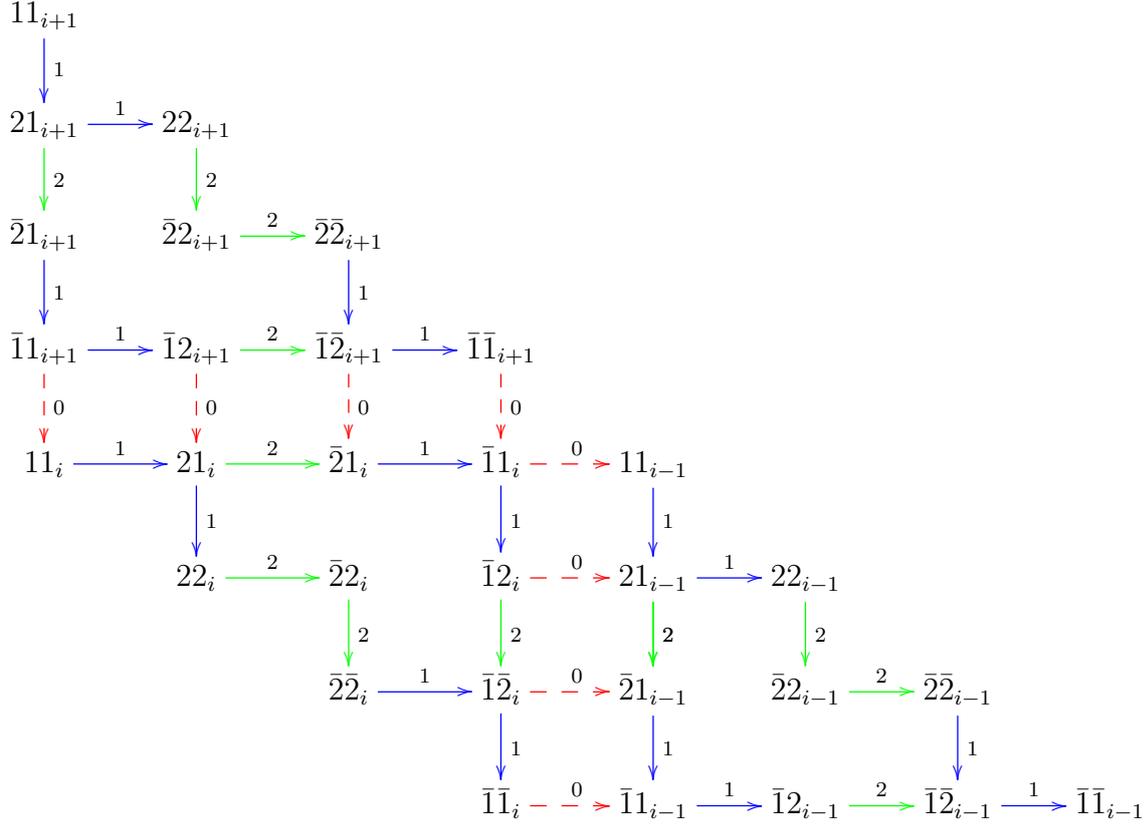
\begin{figure}[h!]
$$\xymatrix {
11_{i+1} \ar@[blue][d]^{1} &&&&&&&\\
21_{i+1} \ar@[blue][r]^{1}\ar@[green][d]^{2} & 
22_{i+1} \ar@[green][d]^{2} &&&&&&\\
\bar{2}1_{i+1} \ar@[blue][d]^{1} & 
\bar{2}2_{i+1} \ar@[green][r]^{2} & 
\bar{2}\bar{2}_{i+1}\ar@[blue][d]^{1} &&&&&\\
\bar{1}1_{i+1} \ar@[blue][r]^{1}\ar@{-->}@[red][d]^{0} & 
\bar{1}2_{i+1} \ar@[green][r]^{2}\ar@{-->}@[red][d]^{0} & 
\bar{1}\bar{2}_{i+1}\ar@[blue][r]^{1}\ar@{-->}@[red][d]^{0} & 
\bar{1}\bar{1}_{i+1}\ar@{-->}@[red][d]^{0} &&&&\\
11_{i}\ar@[blue][r]^{1} &
21_{i}\ar@[blue][d]^{1}\ar@[green][r]^{2} & 
\bar{2}1_{i}\ar@[blue][r]^{1} &
\bar{1}1_{i}\ar@[blue][d]^{1} \ar@{-->}@[red][r]^{0}&
11_{i-1}\ar@[blue][d]^{1}&&&\\ 
&22_{i}\ar@[green][r]^{2} & 
\bar{2}2_{i}\ar@[green][d]^{2} &
\bar{1}2_{i}\ar@[green][d]^{2}\ar@{-->}@[red][r]^{0} &
21_{i-1}\ar@[green][d]^{2} 
\ar@[blue][r]^{1}\ar@[green][d]^{2} & 
22_{i-1} \ar@[green][d]^{2} &&\\
&&\bar{2}\bar{2}_{i}\ar@[blue][r]^{1} &
\bar{1}\bar{2}_{i}\ar@[blue][d]^{1}\ar@{-->}@[red][r]^{0} &
\bar{2}1_{i-1}\ar@[blue][d]^{1}& 
\bar{2}2_{i-1} \ar@[green][r]^{2} & 
\bar{2}\bar{2}_{i-1}\ar@[blue][d]^{1} &&\\
&&&\bar{1}\bar{1}_{i}\ar@{-->}@[red][r]^{0}&\bar{1}1_{i-1}    \ar@[blue][r]^{1} & 
\bar{1}2_{i-1} \ar@[green][r]^{2} & 
\bar{1}\bar{2}_{i-1}\ar@[blue][r]^{1} & 
\bar{1}\bar{1}_{i-1} & 
}$$\caption{Gluing triangles in ${\mathcal B}_{C_2^{(1)}}$}
\label{F: the crystal structure of basis for C2}
\end{figure}
\begin{figure}[h!]
$$\xymatrix@=1.6em {
\bar{1}1_{i+1} \ar@[black][r]^{1}\ar@{-->}@[red][d]^{0} & 
\bar{1}2_{i+1} \ar@[black][r]^{2}\ar@{-->}@[red][d]^{0} &&
\cdots \ar@[black][r]^{l-1} & 
\bar{1}l_{i+1} \ar@[black][r]^{l} \ar@{-->}@[red][d]^{0}& 
\bar{1}\bar{l}_{i+1} \ar@[black][r]^{l-1}\ar@{-->}@[red][d]^{0} &&  
\cdots \ar@[black][r]^{2} & 
\bar{1}\bar{2}_{i+1}\ar@[black][r]^{1}\ar@{-->}@[red][d]^{0} & 
\bar{1}\bar{1} _{i+1}\ar@{-->}@[red][d]^{0}
\\
11_{i} \ar@[black][r]^{1} & 
21_{i} \ar@[black][r]^{2} &&
\cdots \ar@[black][r]^{l-1} & 
l 1_{i}\ar@[black][r]^{l} & 
\bar{l}1_{i} \ar@[black][r]^{l-1} &&  
\cdots \ar@[black][r]^{2} & 
\bar{2}1_{i}\ar@[black][r]^{1}& 
\bar{1}1 _{i}  
}$$\caption{Gluing the catheti of triangles ${\mathcal B}_{C_l}(\theta)\otimes t^{i+1}$ and ${\mathcal B}_{C_l}(\theta)\otimes t^{i}$}
\label{F: gluing the catheti of triangles for Cl}
\end{figure}
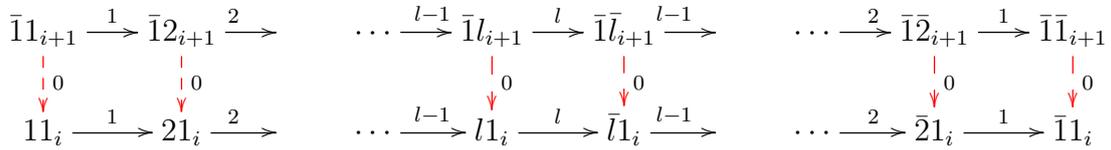
\begin{remark}
We shall usually write the array ${\mathcal B}_{C_l^{(1)}}$ rotated by $\pi/4$. We shall mainly write only negative root vectors ${\mathcal B}_{C_l^{(1)}}^-$, starting from the left column with $f_0=11\otimes t^{-1}$, $f_1=\bar{1}2\otimes  t^0$, \dots, $f_l=\bar{l}\bar{l}\otimes  t^0$ (for $l=2$ see Figure \ref{AC2}), with $f_0$ in the bottom row. Finally, for our purposes, it is not necessary to specify precisely the elements $b_i\in {\mathcal B}_{C_l^{(1)}}$, as all the information needed is encoded in the ``colored'' arrows and ``positions of triangles'' ${\mathcal B}_{C_l}(\theta)\otimes t^{i}$ (for $l=2$ see Figure \ref{F: the arrangement of negative root vectors  for C2(1)} where the positions of the triangles are denoted by  ``circles'' and ``bullets''). 
\end{remark}
\begin{proposition}
The {\em array   of  negative root vectors for $C_l^{(1)}$}, denoted by $\mathcal{B}^{-}_{C_l^{(1)}}$, is a colored directed graph.
Its  nodes, which represent the basis vectors of $\mathfrak g(C_l^{(1)})^{-}$, are organized into $2l+1$ rows and two sequences of diagonals with $2l+1$ (or fewer) nodes. Its arrows  indicate the action of Chevalley's generators $f_0,f_1,\ldots,f_l$ on the negative root subspaces and they are colored by  $0,1,\ldots ,l$, respectively. Removing the arrows of color $0$, which correspond to the action of  $f_0=11\otimes t^{-1}$, the graph decomposes into an infinite union of connected subgraphs. We shall refer to these subgraphs as {\em triangles}. The first triangle, positioned in the upper left corner,   corresponds to the root vector basis of  $l^2$-dimensional Lie subalgebra $\mathfrak{n}_{-}\otimes t^{0}\cong \mathfrak{n}_{-}$ of the simple Lie algebra $\mathfrak g(C_l)$ of type $C_l $. The remaining triangles
are crystal graphs ${\mathcal B}_{C_l}(\theta)$ of the adjoint representation of $\mathfrak g(C_l)$ given over $L_{C_l}(\theta) \otimes t^{i}$ with $i< 0$. 
The weights of nodes in $\mathcal{B}^{-}_{C_l^{(1)}}$ are periodic with  period $\delta$. We place the Chevalley generator $f_0$ in the bottom row.

The main property of  $\mathcal{B}^{-}_{C_l^{(1)}}$ is that the weights of the corresponding points on two adjacent diagonals differ by $-\alpha_i$ if there is an $i$-arrow between these two diagonals. The sequence of arrows between diagonals is determined by the sequence of arrows in ${\mathcal B}_{C_l}(\omega_1)$.
\end{proposition}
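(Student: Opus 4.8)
The plan is to deduce every assertion from the loop realization \eqref{E: affine g(Cl(1))}, the basis \eqref{E: a basis of g(Cl(1))}, and the description of the adjoint crystal $\mathcal{B}_{C_l}(\theta)$ obtained in Section \ref{section_04}. First I would pin down $\mathfrak{g}(C_l^{(1)})^{-}$. By the standard description of the affine root system of $C_l^{(1)}$, a root vector $x\otimes t^{i}$, with $x$ of $\mathfrak{h}$-weight $\nu$ (a root of $\mathfrak{g}(C_l)$ or $0$) and $i$ its $t$-degree, lies in $\mathfrak{n}_{-}$ precisely when $i<0$, or $i=0$ and $\nu$ is a negative root of $\mathfrak{g}(C_l)$. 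Combined with \eqref{E: a basis of g(Cl(1))} this gives
$$
\mathfrak{g}(C_l^{(1)})^{-}=\bigl(\mathfrak{n}_{-}\otimes t^{0}\bigr)\oplus\coprod_{i<0}\mathfrak{g}(C_l)\otimes t^{i},
$$
with $\mathfrak{n}_{-}\subset\mathfrak{g}(C_l)$ the negative part. Reading this off through the crystal parametrization shows at once that the components with $i<0$ are full adjoint crystals $\mathcal{B}_{C_l}(\theta)$ sitting over $L_{C_l}(\theta)\otimes t^{i}$, while the $i=0$ component is the root-vector basis of $\mathfrak{n}_{-}$, of dimension equal to the number of positive roots of $C_l$, namely $l^{2}$.

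Next I would import from Section \ref{section_04} the internal structure of each $\mathcal{B}_{C_l}(\theta)$: it is a right-angled triangle with vertices $11$, $\bar1 1$, $\bar1\bar1$ of $\mathfrak{h}$-weights $\theta$, $0$, $-\theta$, and along each cathetus the arrows colored $1,\dots,l$ reproduce, node by node, the arrow sequence of the vector-representation crystal $\mathcal{B}_{C_l}(\omega_1)$ of Figure \ref{F: crystal of vector representation for Cl}. In particular, every arrow of color $i\in\{1,\dots,l\}$ changes the $\mathfrak{h}$-weight by exactly $-\alpha_i$, and the numerical data claimed in the statement ($2l+1$ rows, and two families of diagonals with at most $2l+1$ nodes) are read off directly from the triangle shape and the subsequent gluing. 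Since all colors $1,\dots,l$ occur within a triangle and each $\mathcal{B}_{C_l}(\theta)$ is connected, deleting the color-$0$ arrows will leave exactly these triangles as connected components, as claimed.

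It then remains to install the color-$0$ arrows and read off periodicity. The key computation is that $f_0=11\otimes t^{-1}$ is the root vector of $-\alpha_0=\theta-\delta$, so that applying $f_0$ sends a weight $w+i\delta$ to $w+\theta+(i-1)\delta$: it lowers the $t$-degree by one and translates the $\mathfrak{h}$-weight by $\theta$. Applying this to the cathetus $[\bar1 1,\bar1\bar1]$ of the triangle at level $t^{i}$, whose $\mathfrak{h}$-weights range over $\{0,\dots,-\theta\}$, carries it onto the cathetus $[11,\bar1 1]$ of the triangle at level $t^{i-1}$, whose $\mathfrak{h}$-weights range over $\{\theta,\dots,0\}$; this is precisely the gluing of Figure \ref{F: gluing the catheti of triangles for Cl}. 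Because both catheti reproduce $\mathcal{B}_{C_l}(\omega_1)$, the match is node by node, the only inter-triangle arrows have color $0$, and the whole picture is invariant under the shift $\nu+i\delta\mapsto\nu+(i-1)\delta$, giving periodicity of weights with period $\delta$. Putting $f_0$ in the bottom row and assembling these weight computations yields the stated main property: two adjacent diagonals carry weights differing by $-\alpha_i$ exactly when an $i$-arrow joins them, with the arrow sequence between diagonals inherited from $\mathcal{B}_{C_l}(\omega_1)$.

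Most of this is bookkeeping once \eqref{E: affine g(Cl(1))} and Section \ref{section_04} are available; the one point needing genuine care is the gluing step, namely that $f_0$ matches the two catheti arrow by arrow and not merely weight by weight. I expect to settle this by observing that in $\mathcal{B}_{C_l}(\omega_1)$ distinct nodes carry distinct weights, so the color of each arrow is already determined by its weight change $-\alpha_i$; the node-by-node weight match established above then upgrades automatically to a color-preserving match, and the remaining numerical assertions (the count $l^{2}$, the $2l+1$ bound, and $\delta$-periodicity) follow at once.
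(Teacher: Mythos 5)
Your proposal is correct and follows essentially the same route as the paper, which establishes the Proposition not by a formal proof but by the construction laid out just before it: the decomposition $\mathfrak g(C_l^{(1)})^{-}=\mathfrak n_{-}\otimes t^{0}\oplus\coprod_{i<0}\mathfrak g(C_l)\otimes t^{i}$ read through the crystal parametrization \eqref{E: a basis of g(Cl(1))}, the triangle shape of $\mathcal B_{C_l}(\theta)$ from Section \ref{section_04}, and the gluing of catheti via $-\alpha_0=\theta-\delta$ as in Figure \ref{F: gluing the catheti of triangles for Cl}. Your added remark that distinct weights in $\mathcal B_{C_l}(\omega_1)$ force the node-by-node weight match of the catheti to be color-preserving is a small point the paper leaves implicit, but it is consistent with and does not change the argument.
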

\begin{example}
The array $\mathcal{B}^{-}_{C_2^{(1)}}$ is given in   Figure \ref{AC2}.  The triangle with vertices $\bar{2}\bar{2}_{0}$, $\bar{1}2_{0}$ and $\bar{1}\bar{1}_{0}$ corresponds to the basis of the $4$-dimensional Lie subalgebra $\mathfrak{n}_{-}\otimes t^{0}$ isomorphic to the nilpotent subalgebra $\mathfrak{n}_{-}$ of the Lie algebra of type $C_2$, while the remaining triangles ${\mathcal B}_{C_l}(\theta)\otimes t^i$, $i<0$, possess catheti consisting of two $1$-arrows and one $2$-arrow, which represent the action of  $f_1$ and $f_2$, respectively. Finally, the triangles are connected by the action of $f_0 $ which is indicated by the dashed $0$-arrows. The same array $\mathcal{B}^{-}_{C_2^{(1)}}$ without specified basis elements $b_i=b\otimes t^i$ is given in Figure \ref{F: the arrangement of negative root vectors  for C2(1)}.
\end{example}
\begin{figure}[h!]
$$\xymatrix@=
0.8em {
\bar{2}\bar{2}_{0}\ar@[blue][rd]^{1} &&
\bar{1}\bar{1}_{0}\ar@{-->}@[red][rd]^{0}&&
11_{-2}\ar@[blue][rd]^{1}&&
22_{-2}\ar@[green][rd]^{2}&&
\bar{2}\bar{2}_{-2}\ar@[blue][rd]^{1}&&
\bar{1}\bar{1}_{-2}&
\hspace{-20pt}\cdots
\\
&\bar{1}\bar{2}_{0}\ar@[blue][ru]^{1}\ar@{-->}@[red][rd]^{0}&&
\bar{1}1_{-1}\ar@[blue][rd]^{1}\ar@{-->}@[red][ru]^{0}&&
21_{-2}\ar@[blue][ru]^{1}\ar@[green][rd]^{2}&&
\bar{2}2_{-2}\ar@[green][ru]^{2}&&
\bar{1}\bar{2}_{-2}\ar@[blue][ru]^{1}\ar@{-->}@[red][rd]^{0}&&
 \\
\bar{1}2_{0}\ar@[green][ru]^{2}\ar@{-->}@[red][rd]^{0} &&
\bar{2}1_{-1}\ar@[blue][ru]^{1}&&
\bar{1}2_{-1}\ar@[green][rd]^{2}\ar@{-->}@[red][ru]^{0}&&
\bar{2}1_{-2}\ar@[blue][rd]^{1}&&
\bar{1}2_{-2}\ar@[green][ru]^{2}\ar@{-->}@[red][rd]^{0}&&
\bar{2}1_{-3}&
\hspace{-20pt}\cdots
\\
&21_{-1}\ar@[green][ru]^{2}\ar@[blue][rd]^{1}&&
\bar{2}2_{-1}\ar@[green][rd]^{2}&&
\bar{1}\bar{2}_{-1}\ar@[blue][rd]^{1}\ar@{-->}@[red][ru]^{0}&&
\bar{1}1_{-2}\ar@[blue][ru]^{1}\ar@{-->}@[red][rd]^{0}&&
21_{-3}\ar@[blue][rd]^{1}\ar@[green][ru]^{2}&&
 \\
11_{-1}\ar@[blue][ru]^{1} &&
22_{-1}\ar@[green][ru]^{2}&&
\bar{2}\bar{2}_{-1}\ar@[blue][ru]^{1}&&
\bar{1}\bar{1}_{-1}\ar@{-->}@[red][ru]^{0}&&
11_{-3}\ar@[blue][ru]^{1}&&
22 _{-3}&
\hspace{-20pt}\cdots
}$$\caption{The arrangement of negative root vectors with elements $b_i$ for $C_2^{(1)}$}\label{AC2}
\end{figure}
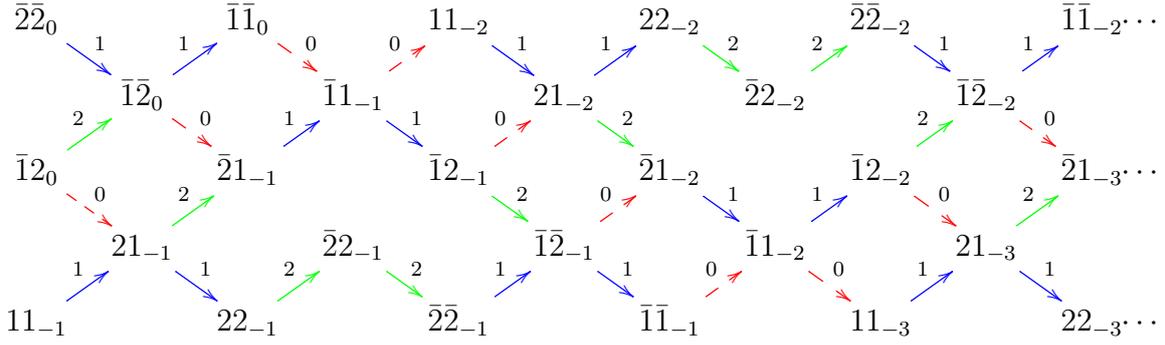
\begin{figure}[h!]
$$\xymatrix@=1.5em {
f_2\ar@[blue][rd]^{1} &&
\circ\ar@{-->}@[red][rd]^{0}&&
\circ\ar@[blue][rd]^{1}&&
\circ\ar@[green][rd]^{2}&&
\circ\ar@[blue][rd]^{1}&&
\circ&
\hspace{-20pt}\cdots
\\
&\circ\ar@[blue][ru]^{1}\ar@{-->}@[red][rd]^{0}&&
\bullet\ar@[blue][rd]^{1}\ar@{-->}@[red][ru]^{0}&&
\circ\ar@[blue][ru]^{1}\ar@[green][rd]^{2}&&
\circ\ar@[green][ru]^{2}&&
\circ\ar@[blue][ru]^{1}\ar@{-->}@[red][rd]^{0}&&
\\
f_1\ar@[green][ru]^{2}\ar@{-->}@[red][rd]^{0} & &
\bullet\ar@[blue][ru]^{1}&&
\bullet\ar@[green][rd]^{2}\ar@{-->}@[red][ru]^{0}&&
\circ\ar@[blue][rd]^{1}&&
\circ\ar@[green][ru]^{2}\ar@{-->}@[red][rd]^{0}&&
\bullet&
\hspace{-20pt}\cdots
\\
&\bullet\ar@[green][ru]^{2}\ar@[blue][rd]^{1}&&
\bullet\ar@[green][rd]^{2}&&
\bullet\ar@[blue][rd]^{1}\ar@{-->}@[red][ru]^{0}&&
\circ\ar@[blue][ru]^{1}\ar@{-->}@[red][rd]^{0}&&
\bullet\ar@[blue][rd]^{1}\ar@[green][ru]^{2}&&
 \\
f_0\ar@[blue][ru]^{1} &&
\bullet\ar@[green][ru]^{2}&&
\bullet\ar@[blue][ru]^{1}&&
\bullet\ar@{-->}@[red][ru]^{0}&&
\bullet\ar@[blue][ru]^{1}&&
\bullet &
\hspace{-20pt}\cdots
}$$\caption{The arrangement of negative root vectors  for $C_2^{(1)}$}
\label{F: the arrangement of negative root vectors  for C2(1)}
\end{figure}
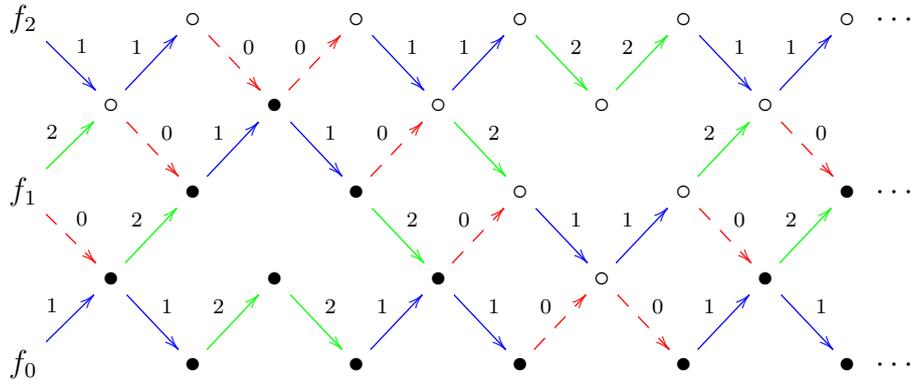

\subsection{The array of negative root vectors for $D_{l+1}^{(2)}$}

Since the crystal $\mathcal B_{B_l}(\lambda)$ parametrizes a weight basis of $\mathfrak g(B_l)$-module $L_{B_l}(\lambda)$, from  \eqref{E: affine g(Dl+1(2))}  we get (a parametrization of) a weight basis of ${\mathfrak g}(D_{l+1}^{(2)})$:
\begin{equation}\label{E: a basis of g(Dl+1(2))}
\{c, d\}\cup {\mathcal B}_{D_{l+1}^{(2)}}, \quad\text{where}\quad
{\mathcal B}_{D_{l+1}^{(2)}} =\bigcup_{j\in\mathbb Z}  {\mathcal B}_{B_l}(\theta)\otimes t^{2j} \ \cup \
\bigcup_{j\in\mathbb Z}  {\mathcal B}_{B_l}(\omega_1)\otimes t^{2j+1}.
\end{equation}
For $i\in\mathbb Z$, set $b_{2i}=b\otimes t^{2i}$ for $b\in {\mathcal B}_{B_l}(\theta)$ and 
$b_{2i+1}=b\otimes t^{2i+1}$ for $b\in {\mathcal B}_{B_l}(\omega_1)$. Note that these are weight vectors in the affine Lie algebra ${\mathfrak g}(D_{l+1}^{(2)})$ of weight 
$$
\wt(b\otimes t^j)=\wt_\mathfrak h (b)+j\delta.
$$
We can glue the catheti of the triangles ${\mathcal B}_{B_l}(\theta)\otimes t^{2i}$ to the lines ${\mathcal B}_{B_l}(\omega_1)\otimes t^{2i\pm1}$ in ${\mathcal B}_{D_{l+1}^{(2)}}$ along the congruent arrows; for $l=2$, this is shown in Figure \ref{F: gluing triangles with lines for D(3)(2)}, and, in general, in Figure \ref{F: gluing catheti of triangles with lines in D(l+1)(2)}. The 
dashed $0$-arrows represent the action of the Chevalley generator $f_0$ on the array of negative root vectors ${\mathcal B}_{D_{l+1}^{(2)}}$, which changes weights of the corresponding vectors by            
$$
-\alpha_0=\omega_1-\delta=\epsilon_1-\delta.
$$
We shall write the array of negative root vectors ${\mathcal B}_{D_{l+1}^{(2)}}^-$ rotated by $\pi/4$, starting with $f_0=1\otimes t^{-1}$, $f_1=2\bar{1}\otimes  t^0$, \dots,  $f_l=0\bar{l}\otimes  t^0$.
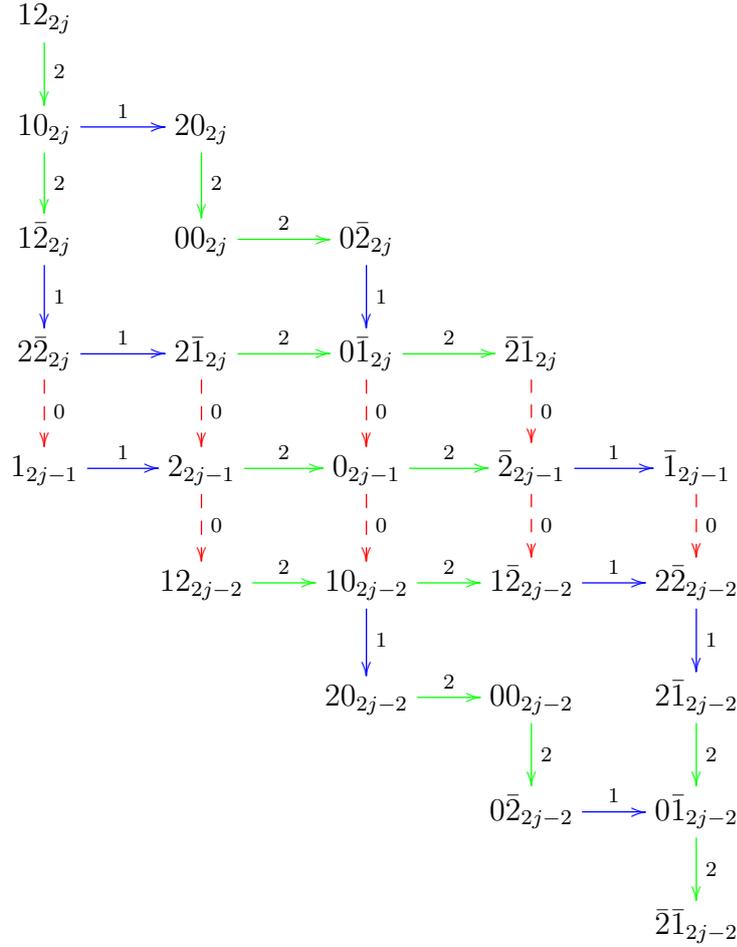
\begin{figure}[h!]
$$
\xymatrix@=2em {
12_{2j}\ar@[green][d]^{2} &
&&&& \\
10_{2j}\ar@[blue][r]^{1}\ar@[green][d]^{2} &
20_{2j}\ar@[green][d]^{2}&
&
&& \\
1\bar{2}_{2j}\ar@[blue][d]^{1} &
00_{2j}\ar@[green][r]^{2}& 
0\bar{2}_{2j}\ar@[blue][d]^{1} &
&& \\
2\bar{2}_{2j}\ar@[blue][r]^{1} \ar@{-->}@[red][d]^{0} &
2\bar{1}_{2j}\ar@[green][r]^{2} \ar@{-->}@[red][d]^{0}& 
0\bar{1}_{2j}\ar@[green][r]^{2}\ar@{-->}@[red][d]^{0} &
\bar{2}\bar{1}_{2j} \ar@{-->}@[red][d]^{0}&&   \\
1_{2j-1} \ar@[blue][r]^{1} & 
2_{2j-1}  \ar@[green][r]^{2} \ar@{-->}@[red][d]^{0}& 
0_{2j-1}  \ar@[green][r]^{2}\ar@{-->}@[red][d]^{0} & 
\bar{2}_{2j-1} \ar@[blue][r]^{1}\ar@{-->}@[red][d]^{0} & 
 \bar{1}_{2j-1}  \ar@{-->}@[red][d]^{0} & \\
& 12_{2j-2}\ar@[green][r]^{2} &
10_{2j-2}\ar@[green][r]^{2}\ar@[blue][d]^{1} & 
1\bar{2}_{2j-2}\ar@[blue][r]^{1} &
2\bar{2}_{2j-2}\ar@[blue][d]^{1} &   \\
& &
20_{2j-2}\ar@[green][r]^{2} & 
00_{2j-2}\ar@[green][d]^{2} &
2\bar{1}_{2j-2} \ar@[green][d]^{2}&   \\
& & & 
0\bar{2}_{2j-2}\ar@[blue][r]^{1} &
0\bar{1}_{2j-2} \ar@[green][d]^{2}&   \\
& & & &
\bar{2}\bar{1}_{2j-2} &   \\
}$$\caption{Gluing triangles to lines in ${\mathcal B}_{D_{3}^{(2)}}$}
\label{F: gluing triangles with lines for D(3)(2)}
\end{figure}
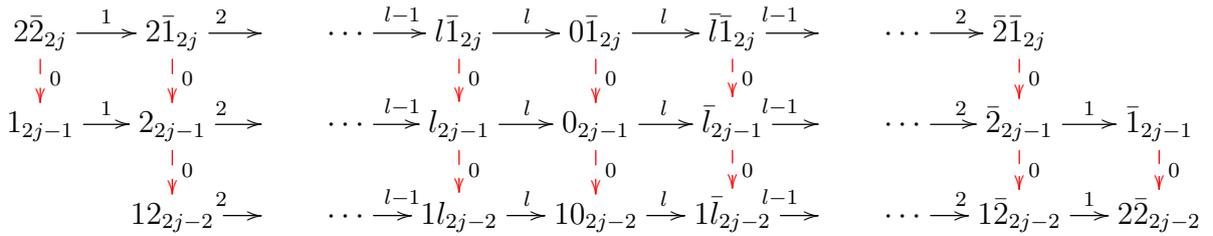
\begin{figure}[h!]
$$\xymatrix@=1.2em  {
2\bar{2}_{2j} \ar@[black][r]^{1} \ar@{-->}@[red][d]^{0} & 
2\bar{1}_{2j} \ar@[black][r]^{2} \ar@{-->}@[red][d]^{0} & 
&
\cdots \ar@[black][r]^{l-1} & 
l\bar{1}_{2j} \ar@[black][r]^{l} \ar@{-->}@[red][d]^{0} & 
0\bar{1}_{2j} \ar@[black][r]^{l}  \ar@{-->}@[red][d]^{0}& 
\bar{l}\bar{1}_{2j} \ar@[black][r]^{l-1} \ar@{-->}@[red][d]^{0} & 
&  
\cdots \ar@[black][r]^{2}  & 
\bar{2}\bar{1}_{2j} \ar@{-->}@[red][d]^{0} & 
  \\
1_{2j-1} \ar@[black][r]^{1} & 
2_{2j-1} \ar@[black][r]^{2}\ar@{-->}@[red][d]^{0} & 
&
\cdots \ar@[black][r]^{l-1} & 
l_{2j-1} \ar@[black][r]^{l} \ar@{-->}@[red][d]^{0}& 
0_{2j-1} \ar@[black][r]^{l} \ar@{-->}@[red][d]^{0}& 
\bar{l}_{2j-1} \ar@[black][r]^{l-1} \ar@{-->}@[red][d]^{0}& 
&  
\cdots \ar@[black][r]^{2} & 
\bar{2}_{2j-1}\ar@[black][r]^{1} \ar@{-->}@[red][d]^{0}& 
\bar{1}_{2j-1} \ar@{-->}@[red][d]^{0} \\
 & 
12_{2j-2} \ar@[black][r]^{2} & 
&
\cdots \ar@[black][r]^{l-1} & 
1l_{2j-2} \ar@[black][r]^{l} & 
10_{2j-2} \ar@[black][r]^{l} & 
1\bar{l}_{2j-2} \ar@[black][r]^{l-1} & 
&  
\cdots \ar@[black][r]^{2} & 
1\bar{2}_{2j-2}\ar@[black][r]^{1} & 
2\bar{2}_{2j-2}  
}$$\caption{Gluing catheti of triangles to lines in ${\mathcal B}_{D_{l+1}^{(2)}}$}
\label{F: gluing catheti of triangles with lines in D(l+1)(2)}
\end{figure}
\begin{figure}[h!]
$$\xymatrix@=0.15em {
 0\bar{2}_{0}\ar@[blue][dr]^{1}&&
\bar{2}\bar{1}_{0}\ar@{-->}@[red][rd]^{0}&&
\bar{1}_{-1}\ar@{-->}@[red][rd]^{0}&&
1_{-3}\ar@[blue][dr]^{1}&&
12_{-4}\ar@[green][dr]^{2}&&
20_{-4}\ar@[green][dr]^{2}&&
0\bar{2}_{-4}\ar@[blue][dr]^{1}&&&
\hspace{-10pt}\cdots\\
&
0\bar{1}_{0}\ar@{-->}@[red][rd]^{0}\ar@[green][ur]^{2}&&
\bar{2}_{-1}\ar@{-->}@[red][rd]^{0}\ar@[blue][ur]^{1}&&
2\bar{2}_{-2}\ar@[blue][dr]^{1}\ar@{-->}@[red][ru]^{0}&&
2_{-3}\ar@{-->}@[red][ru]^{0}\ar@[green][dr]^{2}&&
10_{-4}\ar@[green][dr]^{2}\ar@[blue][ur]^{1}&&
00_{-4}\ar@[green][ur]^{2}&&
0\bar{1}_{-4}&&
\\
2\bar{1}_{0}\ar@{-->}@[red][rd]^{0}\ar@[green][ur]^{2}&&
0_{-1}\ar@{-->}@[red][rd]^{0}\ar@[green][ur]^{2}&&
1\bar{2}_{-2}\ar@[blue][ur]^{1}&&
2\bar{1}_{-2}\ar@{-->}@[red][ru]^{0}\ar@[green][dr]^{2}&&
0_{-3}\ar@{-->}@[red][ru]^{0}\ar@[green][dr]^{2}&&
1\bar{2}_{-4}\ar@[blue][dr]^{1}&&
2\bar{1}_{-4}\ar@{-->}@[red][rd]^{0}\ar@[green][ur]^{2}&&&
\hspace{-10pt}\cdots\\
&
2_{-1}\ar@{-->}@[red][rd]^{0}\ar@[green][ur]^{2}&&
10_{-2}\ar@[green][ur]^{2}\ar@[blue][dr]^{1}&&
00_{-2}\ar@[green][dr]^{2}&&
0\bar{1}_{-2}\ar@{-->}@[red][ru]^{0}\ar@[green][dr]^{2}&&
\bar{2}_{-3}\ar@{-->}@[red][ru]^{0}\ar@[blue][dr]^{1}&&
2\bar{2}_{-4}\ar@[blue][ur]^{1}\ar@{-->}@[red][rd]^{0}&&
2_{-5}&&
\\
1_{-1}\ar@[blue][ur]^{1}&&
12_{-2}\ar@[green][ur]^{2}&&
20_{-2}\ar@[green][ur]^{2}&&
0\bar{2}_{-2}\ar@[blue][ur]^{1}&&
\bar{2}\bar{1}_{-2}\ar@{-->}@[red][ru]^{0}&&
\bar{1}_{-3}\ar@{-->}@[red][ru]^{0}&&
1_{-5}\ar@[blue][ur]^{1}&&&
\hspace{-10pt}\cdots
}$$\caption{Arrangement of negative root vectors with elements $b_i$ for $D_3^{(2)}$}\label{AD3}
\end{figure}
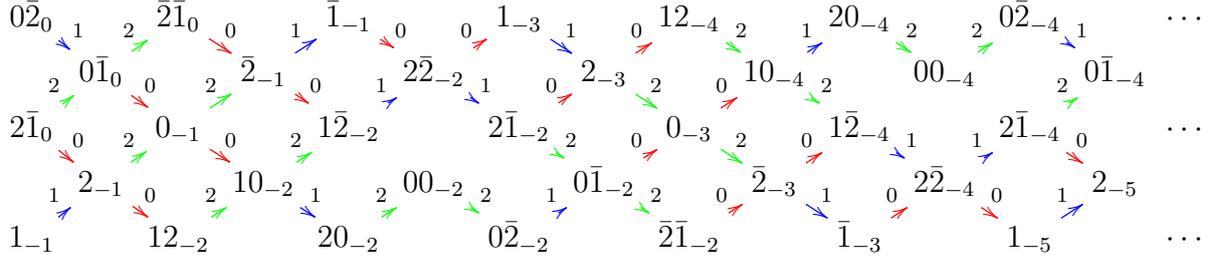
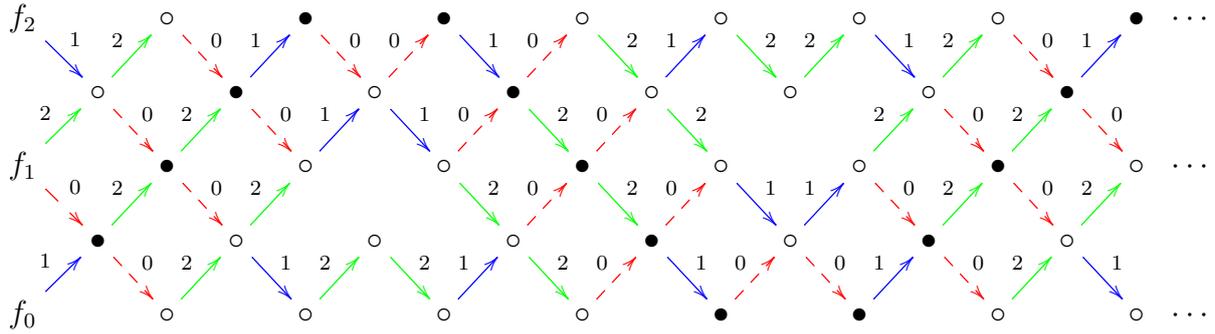
\begin{figure}[h!]
$$\xymatrix@=1.1em {
f_2\ar@[blue][dr]^{1}&
&
\circ\ar@{-->}@[red][rd]^{0}&
&
\bullet\ar@{-->}@[red][rd]^{0}&
&
\bullet\ar@[blue][dr]^{1}&
&
\circ\ar@[green][dr]^{2}&
&
\circ\ar@[green][dr]^{2}&
&
\circ\ar@[blue][dr]^{1}&
&
\circ\ar@{-->}@[red][rd]^{0}&
&
\bullet&\hspace{-10pt}\cdots\\
&
\circ\ar@{-->}@[red][rd]^{0}\ar@[green][ur]^{2}&
&
\bullet\ar@{-->}@[red][rd]^{0}\ar@[blue][ur]^{1}&
&
\circ\ar@[blue][dr]^{1}\ar@{-->}@[red][ru]^{0}&
&
\bullet\ar@{-->}@[red][ru]^{0}\ar@[green][dr]^{2}&
&
\circ\ar@[green][dr]^{2}\ar@[blue][ur]^{1}&
&
\circ\ar@[green][ur]^{2}&
&
\circ\ar@{-->}@[red][rd]^{0}\ar@[green][ur]^{2}&
&
\bullet\ar@{-->}@[red][rd]^{0}\ar@[blue][ur]^{1}\\
f_1\ar@{-->}@[red][rd]^{0}\ar@[green][ur]^{2}&
&
\bullet
\ar@{-->}@[red][rd]^{0}\ar@[green][ur]^{2}&
&
\circ\ar@[blue][ur]^{1}&
&
\circ\ar@{-->}@[red][ru]^{0}\ar@[green][dr]^{2}&
&
\bullet\ar@{-->}@[red][ru]^{0}\ar@[green][dr]^{2}&
&
\circ\ar@[blue][dr]^{1}&
&
\circ\ar@{-->}@[red][rd]^{0}\ar@[green][ur]^{2}&
&
\bullet\ar@{-->}@[red][rd]^{0}\ar@[green][ur]^{2}&
&
\circ&\hspace{-10pt}\cdots\\
&
\bullet\ar@{-->}@[red][rd]^{0}\ar@[green][ur]^{2}&
&
\circ\ar@[green][ur]^{2}\ar@[blue][dr]^{1}&
&
\circ\ar@[green][dr]^{2}&
&
\circ\ar@{-->}@[red][ru]^{0}\ar@[green][dr]^{2}&
&
\bullet\ar@{-->}@[red][ru]^{0}\ar@[blue][dr]^{1}&
&
\circ\ar@[blue][ur]^{1}\ar@{-->}@[red][rd]^{0}&
&
\bullet\ar@{-->}@[red][rd]^{0}\ar@[green][ur]^{2}&
&
\circ\ar@[green][ur]^{2}\ar@[blue][dr]^{1}\\
f_0\ar@[blue][ur]^{1}&
&
\circ\ar@[green][ur]^{2}&
&
\circ\ar@[green][ur]^{2}&
&
\circ\ar@[blue][ur]^{1}&
&
\circ\ar@{-->}@[red][ru]^{0}&
&
\bullet\ar@{-->}@[red][ru]^{0}&
&
\bullet\ar@[blue][ur]^{1}&
&
\circ\ar@[green][ur]^{2}&
&
\circ&\hspace{-10pt}\cdots
}$$\caption{Arrangement of negative root vectors  for $D_3^{(2)}$}
\label{F: arrangement of negative root vectors for D3(2)}
\end{figure} 
\begin{proposition}
The {\em array   of  negative root vectors for $D_{l+1}^{(2)}$}, denoted by $\mathcal{B}^{-}_{D_{l+1}^{(2)}}$, is a colored directed graph.
Its  nodes, which represent the basis vectors of $\mathfrak g(D_{l+1}^{(2)})^{-}$, are organized into $2l+1$ rows and two sequences of diagonals with $2l+1$ (or fewer) nodes. Its arrows  indicate the action of the Chevalley generators $f_0,f_1,\ldots,f_{l}$ on the negative root subspaces and they are colored by  $0,1,\ldots ,l$, respectively. Removing the arrows of color $0$, which correspond to the action of  $f_0=1\otimes t^{-1}$, the graph decomposes into an infinite union of connected subgraphs. We shall refer to these subgraphs as {\em triangles} and {\em diagonals}. The first triangle, positioned in the upper left corner,  corresponds to the root vector basis  of the  $l^2$-dimensional Lie subalgebra $\mathfrak{n}_{-}$ of the simple Lie algebra $\mathfrak{g}(B_l)$ of type $B_l$.  
The remaining triangles
are crystal graphs ${\mathcal B}_{B_l}(\theta)$ of the adjoint representation of ${\mathfrak g}(B_l)$ given over $L_{B_l}(\theta) \otimes t^{2i}$ with $i< 0$.
Finally,  the diagonals, which consist of $2l+1$ nodes, are crystal graphs 
${\mathcal B}_{B_l}(\omega_1)$ of the vector representation $L_{B_{l}}(\omega_1)$ given over $\mathbb{C}^{2l+1} \otimes t^{2i+1}$   with $i <0$. 
The weights of nodes in $\mathcal{B}^{-}_{D_{l+1}^{(2)}}$ are periodic with  period $2\delta$. We place the Chevalley generator $f_0$ in the bottom row.

The main property of  $\mathcal{B}^{-}_{D_{l+1}^{(2)}}$ is that the weights of the corresponding nodes on two adjacent diagonals differ by $-\alpha_i$ if there is an $i$-arrow between these two diagonals. The sequence of arrows between diagonals is determined by the sequence of arrows in ${\mathcal B}_{B_l}(\omega_1)$. 
\end{proposition}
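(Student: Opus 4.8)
The plan is to assemble the statement from three ingredients already in place: the realization \eqref{E: affine g(Dl+1(2))} of $\mathfrak{g}(D_{l+1}^{(2)})$, the weighted crystal structures of $\mathcal{B}_{B_l}(\theta)$ and $\mathcal{B}_{B_l}(\omega_1)$ from Section \ref{section_04}, and the weight formula $\wt(b\otimes t^j)=\wt_{\mathfrak{h}}(b)+j\delta$. First I would pin down the node set. Intersecting the basis \eqref{E: a basis of g(Dl+1(2))} with $\mathfrak{n}_-$ and using that (in the homogeneous gradation by $d$) a root vector $b\otimes t^j$ lies in $\mathfrak{n}_-$ exactly when $j<0$, or $j=0$ and $\wt_{\mathfrak{h}}(b)$ is a negative root of $B_l$, one sees that the negative root vectors are precisely the truncated piece $\mathfrak{n}_-(B_l)\otimes t^0$, the full copies $\mathcal{B}_{B_l}(\theta)\otimes t^{2i}$ for $i<0$, and the copies $\mathcal{B}_{B_l}(\omega_1)\otimes t^{2i+1}$ for $i\leq -1$. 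This yields the advertised nodes; the $2l+1$ rows come from the $(2l+1)$-element vector-representation diagonals $\mathcal{B}_{B_l}(\omega_1)$, into which the triangular pieces $\mathcal{B}_{B_l}(\theta)$ fit, and the truncated first triangle has dimension equal to the number of positive roots of $B_l$, namely $l^2$.

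Next I would read off the arrows. For $i\in\{1,\dots,l\}$ the generator $f_i$ is a Chevalley generator of the fixed-point subalgebra $\mathfrak{g}(B_l)=\mathfrak{g}_{\bar 0}$, so $\operatorname{ad}(f_i)$ acts inside each $t$-homogeneous piece through the $\mathfrak{g}(B_l)$-module structure, which is exactly what the crystal graphs $\mathcal{B}_{B_l}(\theta)$ (on the triangles) and $\mathcal{B}_{B_l}(\omega_1)$ (on the diagonals) record; the weight shift by $-\alpha_i$ is then automatic from the weight formula. The genuinely affine generator is $f_0=1\otimes t^{-1}\in L_{B_l}(\omega_1)\otimes t^{-1}$. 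Here I would use that the graded bracket sends $[\mathfrak{g}(B_l)\otimes t^{2i},\,L_{B_l}(\omega_1)\otimes t^{-1}]$ into $L_{B_l}(\omega_1)\otimes t^{2i-1}$ by the module action and $[L_{B_l}(\omega_1)\otimes t^{2i+1},\,L_{B_l}(\omega_1)\otimes t^{-1}]$ into $\mathfrak{g}(B_l)\otimes t^{2i}$ by the pairing $L_{B_l}(\omega_1)\wedge L_{B_l}(\omega_1)\to\mathfrak{g}(B_l)$, which is just the identification $\wedge^2\mathbb{C}^{2l+1}\cong\mathfrak{so}(2l+1)=\mathfrak{g}(B_l)$. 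Consequently $\operatorname{ad}(f_0)$ alternately carries the triangle at level $2i$ down to the diagonal at level $2i-1$ and the diagonal at level $2i+1$ down to the triangle at level $2i$, shifting weights by $-\alpha_0=\epsilon_1-\delta$ (again forced, since $1$ has $\mathfrak{h}$-weight $\epsilon_1$).

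With the arrows in hand the decomposition is immediate: deleting the colour-$0$ arrows disconnects the graph into its $t$-homogeneous pieces, the even ones being the triangles $\mathcal{B}_{B_l}(\theta)\otimes t^{2i}$ (truncated to $\mathfrak{n}_-(B_l)$ at $i=0$) and the odd ones the diagonals $\mathcal{B}_{B_l}(\omega_1)\otimes t^{2i+1}$. Periodicity with period $2\delta$ follows because the shift $b\otimes t^j\mapsto b\otimes t^{j-2}$ preserves the parity of $j$, hence the crystal type, and translates all weights by $-2\delta$. For the ``main property'' I would invoke the structural fact from Section \ref{section_04}: the catheti of the rectified triangle $\mathcal{B}_{B_l}(\theta)$ carry $f_i$-arrow sequences congruent to the segments $[2,\bar 1]$ and $[1,\bar 2]$ of $\mathcal{B}_{B_l}(\omega_1)$, precisely because $\mathcal{B}_{B_l}(\theta)$ is realized as the adjoint component of $\mathcal{B}_{B_l}(\omega_1)\otimes\mathcal{B}_{B_l}(\omega_1)$. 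The $f_0$-arrows then glue each such cathetus to the neighbouring vector-representation diagonal along congruent arrows, so the arrow sequence between two adjacent diagonals is dictated by $\mathcal{B}_{B_l}(\omega_1)$, as drawn in Figure \ref{F: gluing catheti of triangles with lines in D(l+1)(2)}.

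The main obstacle is this last step, namely verifying that $\operatorname{ad}(f_0)$ realizes exactly the congruent-arrow matching, not merely a connection of the correct pieces. The subtlety is the twisted, order-$2$ structure: odd $t$-powers carry the vector module $L_{B_l}(\omega_1)$ and even powers the adjoint module $\mathfrak{g}(B_l)$, so $f_0$ alternates between two different brackets, and one must check that both are governed by the same vector-representation arrows used to describe the triangles. Once this compatibility is recorded --- essentially the observation that the identification $\wedge^2\mathbb{C}^{2l+1}\cong\mathfrak{g}(B_l)$ and the module action on $\mathbb{C}^{2l+1}$ are both read off the crystal $\mathcal{B}_{B_l}(\omega_1)$ --- the remaining claims are bookkeeping, and the argument runs parallel to the preceding Proposition for $C_l^{(1)}$, the only genuinely new feature being the alternation of triangles with diagonals and the resulting doubling of the period.
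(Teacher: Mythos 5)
Your proposal is correct and follows essentially the same route as the paper, which establishes the proposition by the construction immediately preceding it: the realization \eqref{E: affine g(Dl+1(2))}, the crystal parametrizations of $\mathcal B_{B_l}(\theta)$ and $\mathcal B_{B_l}(\omega_1)$ with the congruent-catheti observation from Section \ref{section_04}, and the gluing along $f_0$-arrows with weight shift $-\alpha_0=\epsilon_1-\delta$ as in Figures \ref{F: gluing triangles with lines for D(3)(2)} and \ref{F: gluing catheti of triangles with lines in D(l+1)(2)}. Your explicit appeal to the pairing $\wedge^2\mathbb{C}^{2l+1}\cong\mathfrak{g}(B_l)$ to justify the alternating action of $\operatorname{ad}(f_0)$ makes precise a compatibility the paper leaves implicit in the figures, but it is the same argument.
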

\begin{example}
The array $\mathcal{B}^{-}_{D_{3}^{(2)}}$ is given by   Figure \ref{AD3}.  
The triangle with vertices $0\bar{2}_{0}$, $2\bar{1}_{0}$, $\bar{2}\bar{1}_{0}$  corresponds to the basis of the $4$-dimensional Lie subalgebra $\mathfrak{n}_{-}$ of the simple Lie algebra  of type $B_2$.  The remaining triangles $\mathcal B_{B_2}(\theta)\otimes t^{2i}$, $i<0$, possess catheti consisting of two $2$-arrows and one $1$-arrow, which represent the action of  $f_2$ and $f_1$, respectively. Finally, the diagonals $\mathcal B_{B_2}(\omega_1)\otimes t^{2i+1}$, $i<0$, consist of two $1$-arrows, at the beginning and at the  end, and two $2$-arrows in the middle.
The triangles and the diagonals are connected by  the action of $f_0 $ which is indicated by the dashed $0$-arrows.

As noted before, for our purposes it is not necessary to precisely specify the elements $b_i\in {\mathcal B}_{D_{l+1}^{(2)}}^-$; all the information needed is encoded in arrows and ``positions of triangles $B(\theta)\otimes t^{2j}$ and lines $B(\omega_1)\otimes t^{2j+1}$''  like in Figure \ref{F: arrangement of negative root vectors for D3(2)}.
\end{example}

\subsection{The array of negative root vectors for $A^{(2)}_{2l}$}

Since the crystal $\mathcal B_{B_l}(\lambda)$ parametrizes a weight basis of $\mathfrak g(B_l)$-module $L_{B_l}(\lambda)$, from  \eqref{E: affine g(A2l(2))}  we get (a parametrization of) a weight basis of ${\mathfrak g}(A^{(2)}_{2l})$:
\begin{equation}\label{E: a basis of g(Dl+1(2))}
\{c, d\}\cup {\mathcal B}_{A^{(2)}_{2l}}, \quad\text{where}\quad
{\mathcal B}_{A^{(2)}_{2l}} =\bigcup_{j\in\mathbb Z}  {\mathcal B}_{B_l}(\theta)\otimes t^{2j} \ \cup \
\bigcup_{j\in\mathbb Z}  {\mathcal B}_{B_l}(2\omega_1)\otimes t^{2j+1}.
\end{equation}
For $i\in\mathbb Z$, set $b_{2i}=b\otimes t^{2i}$ for $b\in {\mathcal B}_{B_l}(\theta)$ and 
$b_{2i+1}=b\otimes t^{2i+1}$ for $b\in {\mathcal B}_{B_l}(2\omega_1)$. Note that these are weight vectors in the affine Lie algebra ${\mathfrak g}(A^{(2)}_{2l})$ of weight 
$$
\wt(b\otimes t^j)=\wt_\mathfrak h (b)+j\delta.
$$
We can glue the catheti of the small triangles ${\mathcal B}_{B_l}(\theta)\otimes t^{2i}$ to the catheti of big triangles ${\mathcal B}_{B_l}(2\omega_1)\otimes t^{2i\pm1}$ in ${\mathcal B}_{A^{(2)}_{2l}}$ along the congruent arrows (see the small and big triangles in Figures \ref{F: crystal of the tensor square of vector representation for B2} and \ref{F: crystal of the tensor square of vector representation for B3} for $l=2$ and $l=3$); for $l=2$, this is shown in Figure \ref{F: gluing small and big triangles in for A4(2)}, and, in general, in Figure \ref{F: gluing catheti of small and big triangles in A2l(2)}. The dashed $0$-arrows represent the action of the Chevalley generator $f_0$ on the array of negative root vectors ${\mathcal B}_{A^{(2)}_{2l}}$, which changes weights of the corresponding vectors by            
$$
-\alpha_0=2\omega_1-\delta=2\epsilon_1-\delta.
$$
We shall usually write the array of negative root vectors ${\mathcal B}_{A^{(2)}_{2l}}^-$ rotated by $\pi/4$, starting with $f_0=0\bar{l}\otimes t^{0}$, $f_1=l\overline{l-1}\otimes  t^0$, \dots,  $f_{l-1}=2\bar{1}\otimes  t^0$ and $f_l= 11\otimes t^{-1}$.
\begin{figure}[h!]
$$\xymatrix@=1.1em {
12_{2j}\ar@[green][d]^{2} &
&&&& 
&&&\\
10_{2j}\ar@[blue][r]^{1}\ar@[green][d]^{2} &
20_{2j}\ar@[green][d]^{2}&
&
&& 
&&&\\
1\bar{2}_{2j}\ar@[blue][d]^{1} &
00_{2j}\ar@[green][r]^{2}& 
0\bar{2}_{2j}\ar@[blue][d]^{1} &
&& 
&&&\\
2\bar{2}_{2j}\ar@[blue][r]^{1}\ar@{-->}@[red][d]^{0} &
2\bar{1}_{2j}\ar@[green][r]^{2} \ar@{-->}@[red][d]^{0}& 
0\bar{1}_{2j}\ar@[green][r]^{2} \ar@{-->}@[red][d]^{0}&
\bar{2}\bar{1}_{2j} \ar@{-->}@[red][d]^{0}&& 
&&&\\
11_{2j-1}\ar@[blue][r]^{1} &
21_{2j-1}\ar@[blue][d]^{1}\ar@[green][r]^{2} & 
01_{2j-1}\ar@[green][r]^{2}\ar@[blue][d]^{1} &
\bar{2}1_{2j-1}\ar@[blue][r]^{1} &
\bar{1}1_{2j-1}\ar@[blue][d]^{1}&&&&
\\ 
&
22_{2j-1}\ar@[green][r]^{2} & 
02_{2j-1}\ar@[green][r]^{2} &
\bar{2}2_{2j-1}\ar@[green][d]^{2} &
\bar{1}2_{2j-1}\ar@[green][d]^{2}\ar@{-->}@[red][r]^{0}&
12_{2j-2}\ar@[green][d]^{2} &&& 
\\
&&&
\bar{2}0_{2j-1}\ar@[blue][r]^{1}\ar@[green][d]^{2}&
\bar{1}0_{2j-1}\ar@[green][d]^{2}\ar@{-->}@[red][r]^{0}&
10_{2j-2}\ar@[blue][r]^{1}\ar@[green][d]^{2} &
20_{2j-2}\ar@[green][d]^{2}&&
\\
&& &
\bar{2}\bar{2}_{2j-1}\ar@[blue][r]^{1} &
\bar{1}\bar{2}_{2j-1}\ar@[blue][d]^{1}\ar@{-->}@[red][r]^{0}&
1\bar{2}_{2j-2}\ar@[blue][d]^{1} &
00_{2j-2}\ar@[green][r]^{2}& 
0\bar{2}_{2j-2}\ar@[blue][d]^{1} & 
\\
&&&&
\bar{1}\bar{1}_{2j-1} \ar@{-->}@[red][r]^{0}& 
2\bar{2}_{2j-2}\ar@[blue][r]^{1} &
2\bar{1}_{2j-2}\ar@[green][r]^{2} & 
0\bar{1}_{2j-2}\ar@[green][r]^{2} &
\bar{2}\bar{1}_{2j-2}      
}$$\caption{Gluing small and big triangles in ${\mathcal B}_{A^{(2)}_{4}}$}
\label{F: gluing small and big triangles in for A4(2)}
\end{figure}
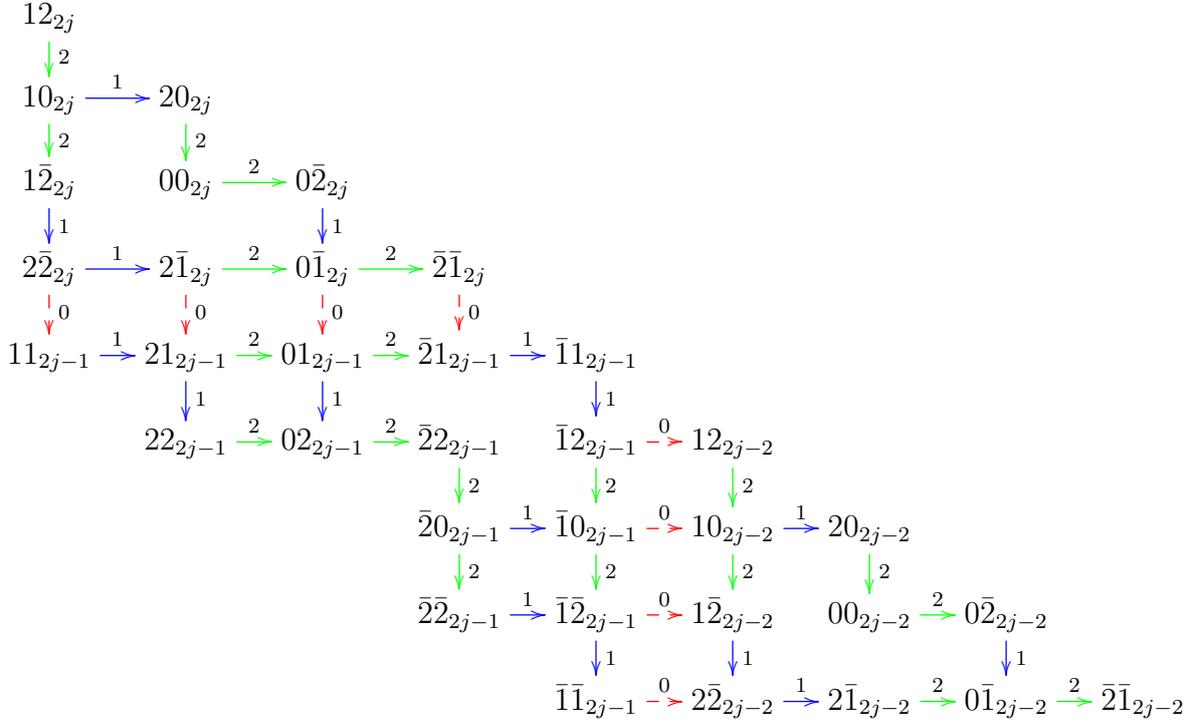
\begin{figure}[h!]
$$
\xymatrix@=1.1em  {
2\bar{2}_{2j} \ar@[black][r]^{1} \ar@{-->}@[red][d]^{0} & 
2\bar{1}_{2j} \ar@[black][r]^{2} \ar@{-->}@[red][d]^{0} & 
&
\cdots \ar@[black][r]^{l-1} & 
l\bar{1}_{2j} \ar@[black][r]^{l} \ar@{-->}@[red][d]^{0} & 
0\bar{1}_{2j} \ar@[black][r]^{l}  \ar@{-->}@[red][d]^{0}& 
\bar{l}\bar{1}_{2j} \ar@[black][r]^{l-1} \ar@{-->}@[red][d]^{0} & 
&  
\cdots \ar@[black][r]^{2}  & 
\bar{2}\bar{1}_{2j} \ar@{-->}@[red][d]^{0} & 
  \\
11_{2j-1} \ar@[black][r]^{1} & 
21_{2j-1} \ar@[black][r]^{2} & 
&
\cdots \ar@[black][r]^{l-1} & 
l1_{2j-1} \ar@[black][r]^{l} & 
01_{2j-1} \ar@[black][r]^{l} & 
\bar{l}1_{2j} \ar@[black][r]^{l-1} & 
&  
\cdots \ar@[black][r]^{2} & 
\bar{2}1_{2j}\ar@[black][r]^{1} & 
\bar{1}1_{2j}  \\
&&&&&&&&&&\\
\bar{1}1_{2j-1} \ar@[black][r]^{1} & 
\bar{1}2_{2j-1} \ar@[black][r]^{2}\ar@{-->}@[red][d]^{0} & 
&
\cdots \ar@[black][r]^{l-1} & 
\bar{1}l_{2j-1} \ar@[black][r]^{l} \ar@{-->}@[red][d]^{0}& 
\bar{1}0_{2j-1} \ar@[black][r]^{l} \ar@{-->}@[red][d]^{0}& 
\bar{1}\bar{l}_{2j} \ar@[black][r]^{l-1} \ar@{-->}@[red][d]^{0}& 
&  
\cdots \ar@[black][r]^{2} & 
\bar{1}\bar{2}_{2j}\ar@[black][r]^{1} \ar@{-->}@[red][d]^{0}& 
\bar{1}\bar{1}_{2j} \ar@{-->}@[red][d]^{0} \\
 & 
12_{2j-2} \ar@[black][r]^{2} & 
&
\cdots \ar@[black][r]^{l-1} & 
1l_{2j-2} \ar@[black][r]^{l} & 
10_{2j-2} \ar@[black][r]^{l} & 
1\bar{l}_{2j-2} \ar@[black][r]^{l-1} & 
&  
\cdots \ar@[black][r]^{2} & 
1\bar{2}_{2j-2}\ar@[black][r]^{1} & 
2\bar{2}_{2j-2}  
}$$\caption{Gluing catheti of small and big triangles in ${\mathcal B}_{A^{(2)}_{2l}}$}
\label{F: gluing catheti of small and big triangles in A2l(2)}
\end{figure}
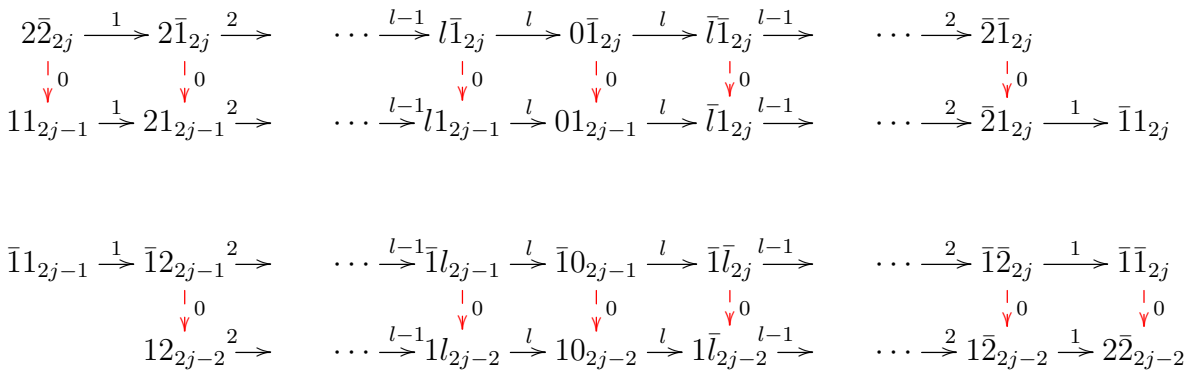
\begin{proposition}
The {\em array   of  negative root vectors for $ A^{(2)}_{2l} $}, denoted by $\mathcal{B}^{-}_{A^{(2)}_{2l}}$ is a colored directed graph.
Its  nodes, which represent the basis vectors of $\mathfrak g(A^{(2)}_{2l})^{-}$, are organized into $2l+1$ rows and two sequences of diagonals with $2l+1$ (or fewer) nodes. 
Its arrows  indicate the action of the Chevalley generators $f_0,f_1,\ldots,f_{l}$ on the negative root subspaces and they are colored by  $0,1,\ldots ,l$, respectively. Removing the arrows of color $l$, which correspond to the action of  $f_l=11\otimes t^{-1}$, the graph decomposes into an infinite union of connected subgraphs which we refer to  as {\em triangles}. The first triangle, positioned in the upper left corner,  corresponds to the root vector basis  of the  $l^2$-dimensional Lie subalgebra $\mathfrak{n}_{-}$ of the simple Lie algebra $\mathfrak{g}(B_l)$ of type $B_l$.  
The remaining triangles alternate between the crystals $\mathcal B_{B_l}(2\omega_1)$ of  $\mathfrak{g}(B_l)$ of the representation given over $L_{B_1}(2\omega_1) \otimes t^{2i+1}$ with $i< 0$, and the crystals $\mathcal B_{B_l}(\theta)$ of the adjoint representation given over $L_{B_1}(\theta) \otimes t^{2i}$ with $i< 0$. 
We shall refer to these subgraphs as {\em big triangles} and {\em small triangles}. The hypotenuses of the big triangles miss the midpoints. The weights of nodes in $\mathcal{B}^{-}_{A^{(2)}_{2l}}$ are periodic with  period $2\delta$. 
We place the hypotenuses of the big triangles on the bottom row; hence the Chevalley generator $f_0$ is in the top row.

The main property of  $\mathcal{B}^{-}_{A^{(2)}_{2l}}$ is that the weights of the corresponding nodes on two adjacent diagonals differ by $-\alpha_i$ if there is an $i$-arrow between these two diagonals. The sequence of arrows between diagonals is determined by the sequence of arrows in ${\mathcal B}_{B_l}(\omega_1)$.
\end{proposition}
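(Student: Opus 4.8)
The plan is to prove the statement by unwinding the construction preceding it: by its very definition $\mathcal{B}^{-}_{A^{(2)}_{2l}}$ is the set of negative root vectors read off from the weight basis attached to the realization \eqref{E: affine g(A2l(2))}, arranged by gluing the $B_l$-crystals of Section~\ref{section_04}. Thus the proof splits into three verifications: that the nodes are exactly the negative root vectors and are organized into $2l+1$ rows and two sequences of diagonals; that each colored arrow records the correct weight shift; and that deleting the color-$l$ arrows yields the asserted connected components together with the periodicity.

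First I would fix the nodes. From \eqref{E: affine g(A2l(2))} and the weight formula $\wt(b\otimes t^{j})=\wt_{\h}(b)+j\delta$, the negative part $\g(A^{(2)}_{2l})^{-}$ is spanned by $\mathcal{B}_{B_l}(\theta)\otimes t^{2i}$ and $\mathcal{B}_{B_l}(2\omega_1)\otimes t^{2i+1}$ for $i<0$, together with the $\n_{-}$-part of $\mathcal{B}_{B_l}(\theta)\otimes t^{0}$; this last piece is the root-vector basis of the $l^{2}$-dimensional subalgebra $\n_{-}\subset\g(B_l)$ and is the first triangle, while the powers $t^{2i}$ and $t^{2i+1}$ with $i<0$ give the alternating small and big triangles.

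Next I would check the arrows. For the generators $f_0,\dots,f_{l-1}$, which act inside a fixed power of $t$, I would invoke Section~\ref{section_04}: on $\mathcal{B}_{B_l}(\theta)$ and $\mathcal{B}_{B_l}(2\omega_1)$ the Kashiwara operators $\tilde f_i$ are proportional to the Chevalley generators of $\g(B_l)$, so each such arrow lowers the $\h$-weight by a simple root and hence the affine weight by the corresponding $-\alpha_i$. For the affinizing generator $f_l=11\otimes t^{-1}$ (the color-$l$ arrows, drawn dashed in Figures~\ref{F: gluing small and big triangles in for A4(2)} and~\ref{F: gluing catheti of small and big triangles in A2l(2)}), I would compute directly from the weight formula that, since $11$ is the highest weight vector of $L_{B_l}(2\omega_1)$, its action shifts the weight by $2\omega_1-\delta=2\epsilon_1-\delta=-\alpha_l$; this supplies the remaining case of the \emph{main property}, the other colors following from the $B_l$-crystal structure.

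Finally, deleting the color-$l$ arrows removes precisely the bonds that cross powers of $t$, so the surviving graph is the disjoint union of the irreducible $B_l$-crystals $\mathcal{B}_{B_l}(\theta)\otimes t^{2i}$ and $\mathcal{B}_{B_l}(2\omega_1)\otimes t^{2i+1}$ (the latter with hypotenuse missing its midpoint, since $00\in\mathcal{B}_{B_l}(\theta)$), which alternate as $i$ decreases; the substitution $i\mapsto i+1$ multiplies the power of $t$ by $t^{2}$, shifts all weights by $2\delta$, and yields the periodicity. The step I expect to be the main obstacle is the coherence of the gluing used in the third paragraph: one must show that the catheti of the small triangles and the matching boundary segments of the big triangles carry arrow-sequences congruent to the vector crystal $\mathcal{B}_{B_l}(\omega_1)$, so that the single imaginary-root shift $-\alpha_l=2\omega_1-\delta$ joins congruent boundaries of consecutive triangles into one colored graph. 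Unlike the $C_l^{(1)}$ case, here two distinct crystal shapes alternate, so tracking which cathetus matches which segment of $\mathcal{B}_{B_l}(\omega_1)$---carried out via the rectification and triangle analysis of Section~\ref{section_04}---is what secures the asserted adjacent-diagonal property.
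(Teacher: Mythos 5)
Your proposal is correct and follows essentially the same route as the paper: the proposition is stated there without a separate proof because it is a summary of the construction in Section~\ref{section_05} (the decomposition \eqref{E: affine g(A2l(2))}, the crystal parametrizations of $L_{B_l}(\theta)$ and $L_{B_l}(2\omega_1)$ from Section~\ref{section_04}, and the gluing of catheti along congruent arrow sequences), which is exactly what you verify. Your identification of the affinizing arrow's weight shift $2\omega_1-\delta=2\epsilon_1-\delta$ and of the missing midpoint $00\in\mathcal{B}_{B_l}(\theta)$ on the big-triangle hypotenuse matches the paper's treatment.
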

\begin{example}
The array $\mathcal{B}^{-}_{A^{(2)}_{4}}$ is given by Figure \ref{AA2}, and by Figure \ref{F: arrangement of negative root vectors  for A4(2)} without specifying vectors $b_i$.
\end{example}
\begin{figure}[h!]
$$\xymatrix@=0.3em {
 0\bar{2}_{0}\ar@[blue][dr]^{1}&
&
\bar{2}\bar{1}_{0}\ar@{-->}@[red][rd]^{2}&
&
\bar{1}1_{-1}\ar@[blue][dr]^{1}&
&
12_{-2}\ar@[green][dr]^{0}&
&
20_{-2}\ar@[green][dr]^{0}&
&
0\bar{2}_{-2}\ar@[blue][dr]^{1}&
&
\bar{2}\bar{1}_{-2}&
\hspace{-20pt}\ \cdots \\
&
0\bar{1}_{0}\ar@{-->}@[red][rd]^{2}\ar@[green][ur]^{0}&
&
\bar{2}1_{-1}\ar@[blue][ur]^{1}&
&
\bar{1}2_{-1}\ar@[green][dr]^{0}\ar@{-->}@[red][ru]^{2}&
&
10_{-2}\ar@[green][dr]^{0}\ar@[blue][ur]^{1}&
&
00_{-2}\ar@[green][ur]^{0}&
&
0\bar{1}_{-2}\ar@[green][ur]^{0}\ar@{-->}@[red][rd]^{2}&\\
2\bar{1}_{0}\ar@{-->}@[red][rd]^{2}\ar@[green][ur]^{0}&
&
01_{-1}\ar@[green][ur]^{0}\ar@[blue][dr]^{1}&
&
\bar{2}2_{-1}\ar@[green][dr]^{0}&
&
\bar{1}0_{-1}\ar@{-->}@[red][ru]^{2}\ar@[green][dr]^{0}&
&
1\bar{2}_{-2}\ar@[blue][dr]^{1}&
&
2\bar{1}_{-2}\ar@{-->}@[red][rd]^{2}\ar@[green][ur]^{0}&
&
01_{-3}&
\hspace{-20pt}\cdots \\
&
21_{-1}\ar@[green][ur]^{0}\ar@[blue][dr]^{1}&
&
02_{-1}\ar@[green][ur]^{0}&
&
\bar{2}0_{-1}\ar@[green][dr]^{0}\ar@[blue][ur]^{1}&
&
\bar{1}\bar{2}_{-1}\ar@{-->}@[red][ru]^{2}\ar@[blue][dr]^{1}&
&
2\bar{2}_{-2}\ar@[blue][ur]^{1}\ar@{-->}@[red][rd]^{2}&
&
21_{-3}\ar@[green][ur]^{0}\ar@[blue][dr]^{1}\\
11_{-1}\ar@[blue][ur]^{1}&
&
22_{-1}\ar@[green][ur]^{0}&
&
&
&
\bar{2}\bar{2}_{-1}\ar@[blue][ur]^{1}&
&
\bar{1}\bar{1}_{-1}\ar@{-->}@[red][ru]^{2}&
&
11_{-3}\ar@[blue][ur]^{1}&
&
22_{-3}&
\hspace{-20pt}\cdots 
}$$\caption{Arrangement of negative root vectors with elements $b_i$ for $A_4^{(2)}$}\label{AA2}
\end{figure}
\begin{figure}[h!]
$$\xymatrix@=1.6em {
 f_0\ar@[blue][dr]^{1}&
&
\circ\ar@{-->}@[red][rd]^{2}&
&
\bullet\ar@[blue][dr]^{1}&
&
\circ\ar@[green][dr]^{0}&
&
\circ\ar@[green][dr]^{0}&
&
\circ\ar@[blue][dr]^{1}&
&
\circ&
\hspace{-20pt}\ \cdots \\
&
\circ\ar@{-->}@[red][rd]^{2}\ar@[green][ur]^{0}&
&
\bullet\ar@[blue][ur]^{1}&
&
\bullet\ar@[green][dr]^{0}\ar@{-->}@[red][ru]^{2}&
&
\circ\ar@[green][dr]^{0}\ar@[blue][ur]^{1}&
&
\circ\ar@[green][ur]^{0}&
&
\circ\ar@[green][ur]^{0}\ar@{-->}@[red][rd]^{2}&\\
f_1\ar@{-->}@[red][rd]^{2}\ar@[green][ur]^{0}&
&
\bullet\ar@[green][ur]^{0}\ar@[blue][dr]^{1}&
&
\bullet\ar@[green][dr]^{0}&
&
\bullet\ar@{-->}@[red][ru]^{2}\ar@[green][dr]^{0}&
&
\circ\ar@[blue][dr]^{1}&
&
\circ\ar@{-->}@[red][rd]^{2}\ar@[green][ur]^{0}&
&
\bullet&
\hspace{-20pt}\cdots \\
&
\bullet\ar@[green][ur]^{0}\ar@[blue][dr]^{1}&
&
\bullet\ar@[green][ur]^{0}&
&
\bullet\ar@[green][dr]^{0}\ar@[blue][ur]^{1}&
&
\bullet\ar@{-->}@[red][ru]^{2}\ar@[blue][dr]^{1}&
&
\circ\ar@[blue][ur]^{1}\ar@{-->}@[red][rd]^{2}&
&
\bullet\ar@[green][ur]^{0}\ar@[blue][dr]^{1}\\
f_2\ar@[blue][ur]^{1}&
&
\bullet\ar@[green][ur]^{0}&
&
&
&
\bullet\ar@[blue][ur]^{1}&
&
\bullet\ar@{-->}@[red][ru]^{2}&
&
\bullet\ar@[blue][ur]^{1}&
&
\bullet&
\hspace{-20pt}\cdots 
}$$\caption{Arrangement of negative root vectors  for $A_4^{(2)}$}
\label{F: arrangement of negative root vectors  for A4(2)}
\end{figure}

\section{Specialized arrays of negative root vectors}\label{section_06}

\subsection{Specialized arrays of negative root vectors for $ C_l^{(1)}$}

Let $s_0, s_1,\ldots, s_l \in \mathbb{N}$ be fixed. By assigning  degrees
$$  \deg f_0=s_0, \  \deg f_1=s_1, \ \ldots, \  \deg f_l=s_l$$
to the arrows in the array $\mathcal{B}^{-}_{C_l^{(1)}}$ we  obtain an array $\mathcal{N}_{C_l^{(1)}}^{(s_0,s_1,\ldots, s_l)}$ of positive integers, which we call the {\em $(s_0,s_1,\ldots, s_l)$-specialized weighted array for $C_l^{(1)}$}. The array $\mathcal{N}_{C_l^{(1)}}^{(s_0,s_1,\ldots, s_l)}$ is periodic with period $s_0+2\sum_{i=1}^{l-1}s_i+ s_l$.

For $s=(1,\dots,1)$ we have the so called {\it principal specialization}. In the principally specialized array of negative root vectors the labels of nodes  increase  by one  when we move one place to the upper right or lower right. Hence we have the following lemma:
\begin{lemma}\label{L: Q(1...1;C)}
For $l \geq 1$, the array $\mathcal{N}_{C_l^{(1)}}^{(1,\ldots,1)}$ consists of $l$ copies of positive integers and one copy of odd positive integers. Equivalently, 
$$
Q(1,\ldots,1; C_l^{(1)})=\prod_{i\in\mathbb N}(1-q^i)^ l\prod_{j\in\mathbb N}(1-q^{2j-1}).
$$
\end{lemma}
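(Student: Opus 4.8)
The plan is to read off the graded dimensions $\dim \mathfrak g_j(\mathbf 1;C_l^{(1)})$ by counting roots by height, and then recognize the answer. By the definition of $Q$ recalled in Section \ref{section_03}, the $\mathbf 1$-specialization sends $e^{-\alpha}$ to $q^{\operatorname{ht}\alpha}$, so that
$$
Q(\mathbf 1;C_l^{(1)}) = \prod_{j\geq 1}(1-q^j)^{\dim \mathfrak g_j(\mathbf 1;C_l^{(1)})},
\qquad
\dim \mathfrak g_j(\mathbf 1;C_l^{(1)}) = \sum_{\substack{\alpha\in\Delta^+\\ \operatorname{ht}\alpha=j}}\dim\mathfrak g_\alpha .
$$
This integer is exactly the number of nodes carrying the label $j$ in the array $\mathcal N_{C_l^{(1)}}^{(1,\dots,1)}$: each arrow raises the label by $s_i=1$, so the label of a (negative) root vector is the height of its root. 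Thus the two formulations in the statement record the same data, and it suffices to prove that $\dim \mathfrak g_j(\mathbf 1;C_l^{(1)})$ equals $l+1$ when $j$ is odd and $l$ when $j$ is even, for every $j\geq 1$.

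Next I would split the positive roots of $C_l^{(1)}$ into their imaginary and real parts. The imaginary roots $n\delta$ ($n\geq 1$) have multiplicity $l$ and height $n\operatorname{ht}\delta=2ln$, since $\operatorname{ht}\delta=2l$. The real roots are $\alpha+n\delta$ of multiplicity $1$, where $\alpha$ runs over all finite roots for $n\geq 1$ and over the positive finite roots for $n=0$. Writing $p_m$ for the number of positive roots of $\mathfrak g(C_l)$ of height $m$, and collecting, for each positive finite root of height $m$, the terms $q^m/(1-q^{2l})$ (from $\alpha$ with $n\geq 0$) and $q^{2l-m}/(1-q^{2l})$ (from $-\alpha$ with $n\geq 1$), the generating function becomes
$$
P(q):=\sum_{j\geq 1}\dim\mathfrak g_j(\mathbf 1;C_l^{(1)})\,q^j
=\frac{1}{1-q^{2l}}\sum_{m=1}^{2l-1}\bigl(p_m+p_{2l-m}\bigr)q^m+\frac{l\,q^{2l}}{1-q^{2l}} .
$$
The numbers $p_m$ are governed by the exponents $1,3,\dots,2l-1$ of $C_l$ (equivalently, read off the root system directly): one finds $p_{2a-1}=l-a+1$ and $p_{2a}=l-a$.

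The crux is the symmetry identity
$$
p_m+p_{2l-m}=\begin{cases} l+1,& m\text{ odd},\\ l,& m\text{ even},\end{cases}\qquad 1\leq m\leq 2l-1,
$$
which I would confirm by a one-line parity check: for $m=2a-1$ one gets $p_{2l-m}=a$, hence $p_m+p_{2l-m}=(l-a+1)+a=l+1$, while for $m=2a$ one gets $p_{2l-m}=a$, hence $(l-a)+a=l$. This is the only place where the special feature of $C_l$, that all its exponents are odd, is used, and I expect it to be the main (if short) obstacle; everything else is bookkeeping of heights, which is precisely what the array $\mathcal N_{C_l^{(1)}}^{(1,\dots,1)}$ is designed to organize.

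Granting the identity, the bracketed numerator together with the imaginary term $l\,q^{2l}$ equals $l(q+q^2+\dots+q^{2l})+(q+q^3+\dots+q^{2l-1})$. Dividing by $1-q^{2l}$ and summing the two finite geometric series gives
$$
P(q)=\frac{l\,q}{1-q}+\frac{q}{1-q^2}=\sum_{j\geq 1}l\,q^j+\sum_{j\geq 1}q^{2j-1},
$$
so $\dim\mathfrak g_j(\mathbf 1;C_l^{(1)})=l+1$ for $j$ odd and $l$ for $j$ even; equivalently the labels of $\mathcal N_{C_l^{(1)}}^{(1,\dots,1)}$ form $l$ copies of $\mathbb N$ together with one copy of the odd positive integers. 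Feeding these multiplicities back into the product yields
$$
Q(\mathbf 1;C_l^{(1)})=\prod_{j\geq 1}(1-q^j)^{l}\,\prod_{j\geq 1,\ j\text{ odd}}(1-q^j)=\prod_{i\in\mathbb N}(1-q^i)^{l}\prod_{j\in\mathbb N}(1-q^{2j-1}),
$$
which is the asserted formula; the case $l=1$ is covered as well, since then the exponent set is $\{1\}$.
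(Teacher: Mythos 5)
Your proof is correct, but it takes a different route from the paper's. The paper treats the lemma as an immediate consequence of the structure of the array: in the principal specialization every arrow increases the label by $1$, so each of the $2l+1$ rows of $\mathcal{N}_{C_l^{(1)}}^{(1,\ldots,1)}$ is a full arithmetic progression of step $2$, alternating between the odd and the even positive integers; since $l+1$ rows are odd and $l$ are even (see Figure \ref{fig:arrayC1}), pairing each even row with an odd one yields $l$ copies of $\mathbb N$ plus one leftover copy of the odd integers. You instead bypass the array entirely and compute $\dim\mathfrak g_j(\mathbf 1;C_l^{(1)})$ from the root system: the height $2l$ of $\delta$, the multiplicity $l$ of imaginary roots, the height distribution $p_{2a-1}=l-a+1$, $p_{2a}=l-a$ of the positive roots of $C_l$ (governed by the exponents $1,3,\dots,2l-1$), and the symmetry $p_m+p_{2l-m}=l+1$ or $l$ according to the parity of $m$. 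All of these ingredients check out, and the geometric-series bookkeeping gives the right answer, including the degenerate case $l=1$. What your argument buys is independence from the pictures and a clear identification of where the structure of $C_l$ enters (all exponents odd); what the paper's argument buys is brevity and the fact that it exhibits the $l$ copies of $\mathbb N$ and the copy of the odds as explicit rows of the array, which is the form in which the lemma is reused later (e.g.\ in the proof of Theorem \ref{T: Lepowskys formula} and in Example \ref{e21..12}).
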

\begin{example}\label{eC1...1}
Figure \ref{fig:arrayC1} represents the array $\mathcal{N}_{C_4^{(1)}}^{(1,1,1,1,1)}$.  Note that   the labels corresponding to the different adjoint triangles    are written in italic and bold, so that they can be immediately detected in the image. 
\end{example}

\begin{figure}[htb!]
$$
\begin{array}{cccccccccccccccc}
{\textit{1} }& & {\textit{3}}& & {\textit{5}}& & {\textit{7}}& & \textit{9} & &\textit{11} &  &\textit{13} & &\textit{15} &  \\
&{\textit{2}} & &{\textit{4}} & &{\textit{6}} & &\textbf{8}& &\textit{10} & & \textit{12} &  &\textit{14} & &\textit{16}\\
{\textit{1} } & &{\textit{3}} & &{\textit{5}}& &\textbf{7} & &\textbf{9} &  &\textit{11} & &\textit{13} &  &\textit{15} &  \\
&{\textit{2}} & &{\textit{4}}  & &\textbf{6}& &\textbf{8} & &  \textbf{10}& &\textit{12} &  &\textit{14}&  &\textit{16} \\
{\textit{1} } & &{\textit{3}} & &\textbf{5} & &\textbf{7} & &\textbf{9} & & \textbf{11}& &\textit{13} &  &\textit{15} &  \\
&{\textit{2}}  & &\textbf{4}& &\textbf{6} & &\textbf{8} & &\textbf{10} & & \textbf{12}& &\textit{14} &  &\textit{16}\\
{\textit{1} }& &\textbf{3}& &\textbf{5} & &\textbf{7} & &\textbf{9} & &\textbf{11} & &  \textbf{13}& &\textit{15} &\\
&\textbf{2} & &\textbf{4} & &\textbf{6} & &\textbf{8} & &\textbf{10} & &\textbf{12} & & \textbf{14}& &\textit{16}\\
\textbf{1 }& &\textbf{3} & &\textbf{5} & &\textbf{7} & &\textbf{9} & &\textbf{11} & & \textbf{13} & &\textbf{15}&\\
\end{array}\quad\dots
$$
\caption{Array $\mathcal{N}_{C_4^{(1)}}^{(1,1,1,1,1)}$}
  \label{fig:arrayC1}
\end{figure}
\begin{example}\label{e21..12}
The array $\mathcal{N}_{C_4^{(1)}}^{(2,1,1,1,2)}$ consists of three copies of positive integers, one copy of even positive integers and one copy of positive integers congruent to $5$ modulo $10$. We shall now demonstrate how such a conclusion can be easily obtained   examining the  array in the case $l=4$.  First, note that the array $\mathcal{N}_{C_4^{(1)}}^{(2,1,1,1,2)}$ is periodic with period $10$, so it is enough to consider the first two triangles corresponding to $\mathfrak n_-\otimes t^0$ and $\mathfrak n_+\otimes t^{-1}$ (see Figure \ref{fig:arrayC2}). Then, as in Figure \ref{fig:arrayC1}, we divide the array  $\mathcal{N}_{C_4^{(1)}}^{(2,1,1,1,2)}$ into triangles; see Figure \ref{fig:arrayC2}. Next, suppose that all   triangles with italic labels slide one place down the catheti of the adjacent   triangles with  bold labels; see Figure \ref{fig:arrayC2pomaknut}.
\noindent  Finally, in Figure \ref{fig:arrayC2pomaknut}, we notice three copies of the positive integers (we indicate their positions in Figure \ref{fig:arrayC2pomaknut2} by the symbols  $    \textcolor{red}{\tcircled{1}},\textcolor{green}{\tcircled{2}},\textcolor{blue}{\tcircled{3}}$), one copy of the even positive  integers (indicated by $\textcolor{cyan}{\tcircled{E}}$ in Figure \ref{fig:arrayC2pomaknut2}) and one copy  of the positive integers congruent to $5$ modulo $10$ (indicated by $\textcolor{brown}{\tcircled{5}}$ in Figure \ref{fig:arrayC2pomaknut2}).
\end{example}

\begin{figure}[htb!]
\begin{minipage}{0.48\textwidth}
     \centering
$$
\begin{array}{cccccccccccccccc}
{\textit{2} }& & {\textit{4}}& & {\textit{6}}& & {\textit{8}} \\
&{\textit{3}} & &{\textit{5}} & &{\textit{7}} & &\textbf{10} \\
{\textit{1} } & &{\textit{4}} & &{\textit{6}}& &\textbf{9} \\
&{\textit{2}} & &{\textit{5}}  & &\textbf{8}& &\textbf{10}  \\
{\textit{1} } & &{\textit{3}} & &\textbf{7} & &\textbf{9}   \\
&{\textit{2}}  & &\textbf{5}& &\textbf{8} & &\textbf{10}  \\
{\textit{1} }& &\textbf{4}& &\textbf{6} & &\textbf{9}  \\
&\textbf{3} & &\textbf{5} & &\textbf{7} & &\textbf{10} \\
\textbf{2 }& &\textbf{4} & &\textbf{6} & &\textbf{8}   
\end{array}\quad\dots
$$
\caption{Array $\mathcal{N}_{C_4^{(1)}}^{(2,1,1,1,2)}$}
  \label{fig:arrayC2}
\end{minipage}\hfill
\begin{minipage}{0.48\textwidth}
     \centering
$$
\begin{array}{ccccccccccccccccccc}
{\textit{2} } & & {\textit{4}} & &{\textit{6}} & &{\textit{8}} & &\textbf{10}\\
& {\textit{3} } & &{\textit{5}} & &{\textit{7}}& &\textbf{9} \\
{\textit{1}} & &{\textit{4}}  & & {\textit{6}}  & &\textbf{8}& &\textbf{10}\\
& {\textit{2} } & &{\textit{5}} & &\textbf{7} & &\textbf{9} \\
{\textit{1}}  & & {\textit{3}}  & &\textbf{5}& &\textbf{8} & &\textbf{10}\\
& {\textit{2} }& &\textbf{4}& &\textbf{6} & &\textbf{9} \\
{\textit{1} }& &\textbf{3} & &\textbf{5} & &\textbf{7} & &\textbf{10} \\
& \textbf{2 }& &\textbf{4} & &\textbf{6} & &\textbf{8}  
\end{array}\quad\dots
$$
\caption{Translated triangles in the array $\mathcal{N}_{C_4^{(1)}}^{(2,1,1,1,2)}$}
  \label{fig:arrayC2pomaknut}
	\end{minipage}
\end{figure}

\begin{figure}[htb!]
$$
\begin{array}{ccccccccccc}
\textcolor{cyan}{\circled{E}} & & \textcolor{cyan}{\circled{E}} & &\textcolor{cyan}{\circled{E}} & &\textcolor{cyan}{\circled{E}}& &  \textcolor{cyan}{\circled{E}}\\
& \textcolor{blue}{\circled{3}} & &     \textcolor{green}{\circled{2}} & &     \textcolor{red}{\circled{1}} & &\textcolor{blue}{\circled{3}} &  \\
 \textcolor{red}{\circled{1}}  & &\textcolor{blue}{\circled{3}} & &     \textcolor{green}{\circled{2}}  & &        \textcolor{red}{\circled{1}} & & \textcolor{blue}{\circled{3}}   \\     
& \textcolor{red}{\circled{1}}  & &\textcolor{blue}{\circled{3}} & &      \textcolor{green}{\circled{2}}  & &       \textcolor{red}{\circled{1}}  & \\
 \textcolor{green}{\circled{2}}  & &      \textcolor{red}{\circled{1}}  & &\textcolor{brown}{\circled{5}}& &       \textcolor{green}{\circled{2}}  & &  \textcolor{red}{\circled{1}}\\ 
&      \textcolor{green}{\circled{2}} & &       \textcolor{red}{\circled{1}} & &\textcolor{blue}{\circled{3}} & &      \textcolor{green}{\circled{2}}  & \\
\textcolor{blue}{\circled{3}}& &     \textcolor{green}{\circled{2}}  & &  \textcolor{red}{\circled{1}}  & &\textcolor{blue}{\circled{3}} & &     \textcolor{green}{\circled{2}}  \\
& \textcolor{blue}{\circled{3}}& &      \textcolor{green}{\circled{2}}  & &      \textcolor{red}{\circled{1}}  & & \textcolor{blue}{\circled{3}} & 
\end{array}\quad\dots
$$
\caption{Three copies of the positive  integers ($\textcolor{red}{\tcircled{1}},\textcolor{green}{\tcircled{2}},\textcolor{blue}{\tcircled{3}}$), one  copy of the even positive integers ($\textcolor{cyan}{\tcircled{E}}$)  and one copy  of the positive  integers congruent to $5$ modulo $10$ ($\textcolor{brown}{\tcircled{5}}$) in the array $\mathcal{N}_{C_4^{(1)}}^{(2,1,1,1,2)}$}
 \label{fig:arrayC2pomaknut2}
\end{figure}
The previous argument can be generalized to the case of  arbitrary $l\geq3$:
\begin{lemma}\label{L: Q(21...1;C)}
The array $\mathcal{N}_{C_l^{(1)}}^{(2,1,\ldots ,1,2)}$ consists of $l-1$ copies of the positive integers, one copy of the even positive integers and one copy of the positive integers congruent to  $l+1$ modulo $2(l+1)$. Equivalently, 
$$
Q(2,1,\ldots,1,2; C_l^{(1)})=\prod_{i \in \mathbb{N}}(1-q^i)^{l-1}\prod_{j \in \mathbb{N}}(1-q^{2j})\prod_{r\equiv  (l+1) \mod 2(l+1)}(1-q^r).
$$
\end{lemma}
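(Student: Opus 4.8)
The plan is to compute $Q(2,1,\ldots,1,2;C_l^{(1)})$ directly from the definition $Q(s;A)=\prod_{j\ge 1}(1-q^j)^{\dim\mathfrak g_j(s;A)}$, i.e.\ to determine, for each $j\ge 1$, the multiplicity $\dim\mathfrak g_j(s;C_l^{(1)})$ with which $j$ occurs as a label in $\mathcal N^{(2,1,\ldots,1,2)}_{C_l^{(1)}}$. This is the rigorous counterpart of the sliding procedure of Example~\ref{e21..12}: instead of moving triangles in the picture, I count positive roots of $\mathfrak g(C_l^{(1)})$ according to their $s$-degree. With $s_0=s_l=2$ and $s_1=\cdots=s_{l-1}=1$ the induced degree is the linear functional $\deg\alpha_i=s_i$; from $\alpha_i=\epsilon_i-\epsilon_{i+1}$ and $\alpha_l=2\epsilon_l$ one gets $\deg\epsilon_i=l+1-i$, whence $\deg\delta=2(l+1)=:D$, recovering the period stated before the lemma.

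First I would split the positive roots into three families and assemble the degree generating function $G(q)=\sum_{j\ge1}\dim\mathfrak g_j(s)\,q^j$. The finite positive roots contribute $P^+(q)=A(q)+B(q)+C(q)$, where $A(q)=\sum_{m=1}^{l-1}(l-m)q^m$ comes from the roots $\epsilon_i-\epsilon_j$, where $B(q)=\sum_{1\le a<b\le l}q^{a+b}$ comes from $\epsilon_i+\epsilon_j$, and where $C(q)=\sum_{c=1}^{l}q^{2c}$ comes from $2\epsilon_i$. The real roots $\beta+n\delta$ with $n\ge1$ (all finite $\beta$) and the imaginary roots $n\delta$ of multiplicity $l$ contribute $\tfrac{q^D}{1-q^D}\bigl(P(q)+l\bigr)$, where $P(q)=P^+(q)+P^+(q^{-1})$ since finite roots occur in $\pm$ pairs. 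Collecting terms,
$$
G(q)=\frac{P^+(q)+q^DP^+(q^{-1})+l\,q^D}{1-q^D}=:\frac{N(q)}{1-q^D}.
$$
Here $N(q)$ is a polynomial supported in degrees $1,\dots,D$, which already makes the periodicity of the multiplicities transparent, as $G(q)=N(q)(1+q^D+q^{2D}+\cdots)$.

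The key simplification is the index symmetry $i\mapsto l+1-i$ (equivalently $\{\deg\epsilon_i\}=\{1,\dots,l\}$), which yields $q^DC(q^{-1})=C(q)$ and $q^DB(q^{-1})=B(q)$; hence the ``$\epsilon_i+\epsilon_j$'' and ``$2\epsilon_i$'' families enter $N(q)$ palindromically as $2B(q)+2C(q)$, and only the ``$\epsilon_i-\epsilon_j$'' family breaks the symmetry:
$$
N(q)=A(q)+q^DA(q^{-1})+2B(q)+2C(q)+l\,q^D.
$$
It then remains to match the coefficient $c_k$ of $N$ against the target $t_k=(l-1)+[2\mid k]+[k=l+1]$ for $1\le k\le D$, the latter being the coefficient list of $(l-1)\tfrac{q}{1-q}+\tfrac{q^2}{1-q^2}+\tfrac{q^{l+1}}{1-q^{D}}$. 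I would verify this by ranges of $k$: the initial range $k\le l-1$ (controlled by $A$ and $C$), the reflected range $l+3\le k\le 2l+1$ (controlled by $q^DA(q^{-1})$ and $C$), the endpoints $k=l,2l$, the central value $k=l+1$, and finally $k=D$, where the whole count is supplied by the imaginary multiplicity $l=(l-1)+1$. The analogous computation with $s=\mathbf 1$ reproduces Lemma~\ref{L: Q(1...1;C)}, with the long roots $2\epsilon_i$ there producing the odd class.

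The hard part will be the coefficient bookkeeping, specifically controlling $\beta_k:=\#\{a<b\le l: a+b=k\}$, the number of short roots $\epsilon_i+\epsilon_j$ of a given degree. One must check that $2\beta_k$, combined with the contributions of $A$, $C$ and the imaginary roots, collapses for every $k$ to exactly $l-1$, augmented by $1$ when $k$ is even and by a further $1$ when $k\equiv l+1\pmod{D}$. The delicate points are the single interior value $k=l+1$, where $\beta_k$ attains its maximum $\lfloor l/2\rfloor$ and creates the residue class, and $k=D$; isolating these two effects uniformly in the parity of $l$ is what requires care, since the ``even'' surplus is distributed across $A$, $C$ and the imaginary term rather than carried by one family. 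The resulting tallies are exactly the three colored families displayed, for $l=4$, in Figures~\ref{fig:arrayC2pomaknut} and~\ref{fig:arrayC2pomaknut2}.
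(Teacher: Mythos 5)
Your proposal is correct, and it reaches the lemma by a genuinely different route than the paper. The paper does not compute root multiplicities at all: it justifies the lemma by generalizing the pictorial argument of Example \ref{e21..12} --- partition the specialized array into the triangles coming from $\mathfrak n_-\otimes t^0$ and $\mathcal B_{C_l}(\theta)\otimes t^j$, slide each upper (italic) triangle one step down the cathetus of its lower (bold) neighbour, and read off the $l-1$ copies of $\mathbb N$, the copy of $2\mathbb N$ and the residue class $l+1\bmod 2(l+1)$ directly from the resulting picture (Figures \ref{fig:arrayC2pomaknut} and \ref{fig:arrayC2pomaknut2}). You instead compute $\dim\mathfrak g_j(s;C_l^{(1)})$ exactly from the root system, and your setup is right: $\deg\epsilon_i=l+1-i$, $D=2(l+1)$, the palindromy $q^DB(q^{-1})=B(q)$ and $q^DC(q^{-1})=C(q)$, and the reduction to matching $N(q)$ against $(l-1)(q+\cdots+q^D)+(q^2+q^4+\cdots+q^D)+q^{l+1}$. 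The bookkeeping you defer does close: with $\beta_k=\lfloor(k-1)/2\rfloor$ for $k\le l+1$ and $\beta_k=\lfloor(k-1)/2\rfloor-k+l+1$ for $k\ge l+1$, each range gives $c_k=l$ for $k$ even and $c_k=l-1$ for $k$ odd, the value $k=l+1$ picks up the extra $+1$ in either parity of $l$ (odd $l$: $2B$ jumps by the ``even'' unit missing from $C$ plus one more; even $l$: $2\beta_{l+1}=l$ outright), and at $k=D$ the imaginary multiplicity $l=(l-1)+1$ supplies exactly the even-class unit, consistent with $D\not\equiv l+1$. What each approach buys: yours is self-contained and uniform in $l$, at the cost of a case analysis; the paper's is shorter and makes the three ``copies'' visible as geometric pieces of the array, but is only exhibited for $l=4$ and asks the reader to accept the generalization.
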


\begin{lemma}\label{L: Q(21...1;C)}
The array $\mathcal{N}_{C_l^{(1)}}^{(2,1,\ldots, 1)}$ consists of $l$ copies of the positive integers. Equivalently, 
$$
Q(2,1,\ldots,1; C_l^{(1)})=\prod_{i\in\mathbb N}(1-q^i)^ l.
$$
\end{lemma}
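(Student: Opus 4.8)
The plan is to prove the equivalent statement that each positive integer occurs exactly $l$ times as a label in $\mathcal{N}_{C_l^{(1)}}^{(2,1,\ldots,1)}$. By the definition $Q(s;A)=\prod_{j\geq1}(1-q^{j})^{\dim\mathfrak g_{j}(s;A)}$, this is the same as showing $\dim\mathfrak g_{j}(2,1,\ldots,1;C_l^{(1)})=l$ for every $j\geq1$. Since the $s$-degree of $\delta$ equals $s_0+2\sum_{i=1}^{l-1}s_i+s_l=2+2(l-1)+1=2l+1$, adding $\delta$ raises the degree of every positive root by exactly $2l+1$ and is a bijection of positive roots respecting root-space dimensions; hence $d\mapsto\dim\mathfrak g_{d}$ is periodic with period $2l+1$, and it suffices to treat $d\in\{1,\ldots,2l+1\}$.

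First I would record the degrees. Using $\alpha_0=\delta-\theta$ together with $s_0=2$ and $s_1=\cdots=s_l=1$, one gets $\deg_s(n\delta)=n(2l+1)$, while for a finite root $\beta=\sum_{i=1}^{l}c_i\alpha_i\in\Delta^+_{C_l}$ one has $\deg_s(\pm\beta+n\delta)=n(2l+1)\pm\operatorname{ht}(\beta)$, where $\operatorname{ht}(\beta)=\sum_i c_i$ because $\beta$ involves no $\alpha_0$ and the remaining $s_i$ all equal $1$. A short check of the ranges shows that within one period the contributing roots are precisely the finite positive roots $\beta$ (at degrees $\operatorname{ht}(\beta)\in\{1,\ldots,2l-1\}$), their mirrors $\delta-\beta$ (at degrees $2l+1-\operatorname{ht}(\beta)\in\{2,\ldots,2l\}$), and the imaginary root $\delta$, whose root space $\mathfrak h\otimes t$ has dimension $l$, at degree $2l+1$. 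Writing $p(h)$ for the number of positive roots of $C_l$ of height $h$, the multiplicity of degree $d$ is thus $p(1)=l$ at $d=1$, then $p(d)+p(2l+1-d)$ for $2\leq d\leq 2l-1$, then $p(1)=l$ at $d=2l$, and $l$ at $d=2l+1$.

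The crux is therefore the height symmetry $p(h)+p(2l+1-h)=l$ for $1\leq h\leq 2l$ (setting $p(h)=0$ outside $\{1,\ldots,2l-1\}$). This I would deduce from the explicit height distribution of $\Delta^+_{C_l}$: height $1$ occurs $l$ times, and for each $k=1,\ldots,l-1$ the heights $2k$ and $2k+1$ each occur $l-k$ times, i.e.\ $p(h)=l-\lceil(h-1)/2\rceil$. This is Kostant's count $p(h)=\#\{\text{exponents}\geq h\}$ for the exponents $1,3,\ldots,2l-1$ of $C_l$, which is exactly the combinatorial shadow of the fact that the two catheti of the adjoint triangle $\mathcal B_{C_l}(\theta)$ are congruent to $\mathcal B_{C_l}(\omega_1)$. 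Since $(h-1)+(2l-h)=2l-1$ is odd, we have $\lceil(h-1)/2\rceil+\lceil(2l-h)/2\rceil=l$, so $p(h)+p(2l+1-h)=2l-l=l$; substituting gives multiplicity $l$ at every degree, whence $Q(2,1,\ldots,1;C_l^{(1)})=\prod_{i\in\mathbb N}(1-q^i)^{l}$.

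I expect the main obstacle to be the bookkeeping that converts the pictorial ``array of triangles'' into this root count: pinning down that one period consists precisely of the $\beta$, the $\delta-\beta$, and $\delta$ with multiplicity $l$, and that the imaginary contribution carries the correct dimension $l$. The height symmetry itself is the one genuinely structural input, and it is where the shape of $C_l$ enters — in particular the \emph{oddness} of the period $2l+1$, which pairs heights of opposite parity and so avoids the parity defect that, for the \emph{even} period $2l$ of the principal case, produced the extra factor $\prod_j(1-q^{2j-1})$ in Lemma \ref{L: Q(1...1;C)}. An alternative proof, closer in spirit to Example \ref{e21..12}, would slide the triangles along the glued catheti until the labels realign into $l$ columns of consecutive integers; this is geometrically transparent but rests on the same symmetry.
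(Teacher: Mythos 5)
Your proof is correct, but it takes a genuinely different route from the paper's. The paper argues pictorially, exactly as in its treatment of the $(2,1,\ldots,1,2)$-array: within each triangle the labels increase by one per step to the upper or lower right, they jump by two when passing between adjacent triangles, and sliding each upper triangle one place down the catheti of the lower one realigns the nodes into $l$ interleaved copies of $\mathbb{N}$. You instead compute $\dim\mathfrak g_j(2,1,\ldots,1;C_l^{(1)})$ directly from the affine root system: the period is $2l+1$, one period contains the finite positive roots $\beta$ at degree $\operatorname{ht}(\beta)$, the mirrors $\delta-\beta$ at degree $2l+1-\operatorname{ht}(\beta)$, and $\delta$ with multiplicity $l$, so everything reduces to the height-distribution identity $p(h)+p(2l+1-h)=l$, which you correctly derive from Kostant's count via the exponents $1,3,\ldots,2l-1$ of $C_l$ (I checked the parity argument and the edge cases $h=1,2l$; the total $\sum_h p(h)=l^2$ also confirms your formula for $p$). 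What each approach buys: the paper's sliding argument is short and visually transparent but somewhat informal, resting on the reader accepting the realignment of the pictures; yours is fully rigorous and self-contained modulo the one external input (the exponents of $C_l$), and it isolates the structural reason the answer is clean --- the \emph{odd} period $2l+1$ pairs heights of opposite parity, which is precisely what fails for the principal specialization of period $2l$ and produces the extra factor $\prod_j(1-q^{2j-1})$ in Lemma \ref{L: Q(1...1;C)}. Your closing remark correctly identifies the paper's actual proof as the ``alternative'' sliding argument.
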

\begin{proof}
In each triangle the labels of nodes  increase  by one  when we move
one place to the upper right or lower right.  Moreover,  when we move one place from one triangle to  another one, the labels  increase by two. Figure \ref{fig:arrayC3} shows three adjacent triangles in the case   $l=4$. Now we can argue as before by sliding the upper triangles one place down the catheti of the adjacent lower triangles.
\end{proof}
\begin{remark}
It is worth noting that the array $\mathcal{N}_{C_l^{(1)}}^{(1,1,\ldots,1, 2)}$ is the transpose of the array $\mathcal{N}_{C_l^{(1)}}^{(2,1,\ldots ,1,1)}$; see  Figure  \ref{fig:arrayC4corr} for $l=4$.
\end{remark}
\begin{figure}[htb!]
$$
\begin{array}{cccccccccccccccc}
{\textit{1} }& & {\textit{3}}& & {\textit{5}}& & {\textit{7}}& & \textit{11} & &\textit{13} &  &\textit{15} & &\textit{17} &  \\
&{\textit{2}} & &{\textit{4}} & &{\textit{6}} & &\textbf{9}& &\textit{12} & & \textit{14} &  &\textit{16} & &\textit{18}\\
{\textit{1} } & &{\textit{3}} & &{\textit{5}}& &\textbf{8} & &\textbf{10} &  &\textit{13} & &\textit{15} &  &\textit{17} &  \\
&{\textit{2}} & &{\textit{4}}  & &\textbf{7}& &\textbf{9} & &  \textbf{11}& &\textit{14} &  &\textit{16}&  &\textit{18} \\
{\textit{1} } & &{\textit{3}} & &\textbf{6} & &\textbf{8} & &\textbf{10} & & \textbf{12}& &\textit{15} &  &\textit{17} &  \\
&{\textit{2}}  & &\textbf{5}& &\textbf{7} & &\textbf{9} & &\textbf{11} & & \textbf{13}& &\textit{16} &  &\textit{18}\\
{\textit{1} }& &\textbf{4}& &\textbf{6} & &\textbf{8} & &\textbf{10} & &\textbf{12} & &  \textbf{14}& &\textit{17} &\\
&\textbf{3} & &\textbf{5} & &\textbf{7} & &\textbf{9} & &\textbf{11} & &\textbf{13} & & \textbf{15}& &\textit{18}\\
\textbf{2 }& &\textbf{4} & &\textbf{6} & &\textbf{8} & &\textbf{10} & &\textbf{12} & & \textbf{14} & &\textbf{16}&\\
\end{array}\quad\dots
$$
\caption{Array $\mathcal{N}_{C_4^{(1)}}^{(2,1,1,1,1)}$}
  \label{fig:arrayC3}
\end{figure}
\begin{figure}[htb!]
$$
\begin{array}{cccccccccccccccc}
\textit{2 }& &\textit{4} & &\textit{6} & &\textit{8} & &\textit{10} & &\textit{12} & & \textit{14} & &\textit{16}&\\
&\textit{3} & &\textit{5} & &\textit{7} & &\textbf{9} & &\textit{11} & &\textit{13} & & \textit{15}& &\textit{18}\\
{\textit{1} }& &\textit{4}& &\textit{6} & &\textbf{8} & &\textbf{10} & &\textit{12} & &  \textit{14}& &\textit{17} &\\
&{\textit{2}}  & &\textit{5}& &\textbf{7} & &\textbf{9} & &\textbf{11} & & \textit{13}& &\textit{16} &  &\textit{18}\\
{\textit{1} } & &{\textit{3}} & &\textbf{6} & &\textbf{8} & &\textbf{10} & & \textbf{12}& &\textit{15} &  &\textit{17} &  \\
&{\textit{2}} & &{\textbf{4}}  & &\textbf{7}& &\textbf{9} & &  \textbf{11}& &\textbf{14} &  &\textit{16}&  &\textit{18} \\
{\textit{1} } & &{\textbf{3}} & &{\textbf{5}}& &\textbf{8} & &\textbf{10} &  &\textbf{13} & &\textbf{15} &  &\textit{17} &  \\
&{\textbf{2}} & &{\textbf{4}} & &{\textbf{6}} & &\textbf{9}& &\textbf{12} & & \textbf{14} &  &\textbf{16} & &\textit{18}\\
{\textbf{1} }& & {\textbf{3}}& & {\textbf{5}}& & {\textbf{7}}& & \textbf{11} & &\textbf{13} &  &\textbf{15} & &\textbf{17} &  \\
\end{array}\quad\dots
$$
\caption{Array $\mathcal{N}_{C_4^{(1)}}^{(1,1,1,1,2)}$}
  \label{fig:arrayC4corr}
\end{figure}
\begin{example} The labels of  nodes in the specialized array $\mathcal{N}_{C_4^{(1)}}^{(4,3,2,3,4)}$ are the  positive integers congruent to
 \begin{align*}
0,0,0,0, \pm 2, \pm 3,\pm 3, \pm 4,\pm 4,\pm 5, \pm 5,\pm 7,\pm 7,\pm 8, \pm 9,  \pm 9, \pm 10, \pm 10, 
\nonumber  \pm 12,\pm 12
\end{align*} 
modulo $24$; see Figure \ref{fig:cong3corr}.
\end{example}

\begin{figure}[htb!]
$$
\begin{array}{ccccccccccccccccc}
{\textit{4} }& & {\textit{10}}& & {\textit{14}}& & {\textit{20}}& & {\textit{28}} & &{\textit{34}} &  &{\textit{38}} & &{\textit{44}} &  \\
&{\textit{7}} & &{\textit{12}} & &{\textit{17}} & & {\textbf{24}} & &{\textit{31}} & & {\textit{36}} &  &{\textit{41}} & &{\textit{48}}\\
{\textit{3} } & &{\textit{9}} & &{\textit{15}}& &\textbf{21} & &\textbf{27} &  &{\textit{33}} & &{\textit{39}} &  &{\textit{45}} &  \\
&{\textit{5}} & &{\textit{12}}  & &\textbf{19}& &\textbf{24} & & \textbf{29}& &{\textit{36}} &  &{\textit{43}}&  &{\textit{48}} \\
{\textit{2} } & &{\textit{8}} & &\textbf{16} & &\textbf{22} & &\textbf{26} & & \textbf{32}& &{\textit{40}} &  &{\textit{46}} &  \\
&{\textit{5}}  & &\textbf{12}& &\textbf{19} & &\textbf{24} & &\textbf{29} & &  \textbf{36}& &{\textit{43}} &  &{\textit{48}}\\
{\textit{3} }& &\textbf{9}& &\textbf{15} & &\textbf{21} & &\textbf{27} & &\textbf{33} & &  \textbf{39}& &{\textit{45}} &\\
&\textbf{7} & &\textbf{12} & &\textbf{17} & &\textbf{24} & &\textbf{31} & &\textbf{36} & & \textbf{41}& &{\textit{48}}\\
\textbf{4 }& &\textbf{10} & &\textbf{14} & &\textbf{20} & &\textbf{28} & &\textbf{34} & & \textbf{38} & &\textbf{44}&\\
\end{array}\quad\dots
$$
\caption{Array $\mathcal{N}_{C_4^{(1)}}^{(4,3,2,3,4)}$}
  \label{fig:cong3corr}
\end{figure}

\subsection{Specialized arrays of negative roots for $D_{l+1}^{(2)}$}\label{subsect_52}

As in the previous case, fix  $s_0, s_1,\ldots, s_l \in \mathbb{N}$ and then  assign the  degrees
$$  \deg f_0=s_0, \  \deg f_1=s_1, \ \ldots, \  \deg f_l=s_l$$
to arrows in the array $\mathcal{B}^{-}_{D_{l+1}^{(2)}}$. Thus, we obtain an array $\mathcal{N}_{D_{l+1}^{(2)}}^{(s_0,s_1,\ldots, s_l)}$ of positive integers, which we call the {\em $(s_0,s_1,\ldots, s_l)$-specialized weighted array for $D_{l+1}^{(2)}$}. The array $\mathcal{N}_{D_{l+1}^{(2)}}^{(s_0,s_1,\ldots, s_l)}$ is periodic with period $2(s_0+s_1+\ldots +s_l)$.

In the principally specialized array of negative root vectors the labels of nodes  increase  by one  when we move one place to the upper right or lower right.  Hence we have the following:
\begin{lemma}\label{L: Q(1...1;D)}
The array $\mathcal{N}_{D_{l+1}^{(2)}}^{(1,\ldots, 1)}$ consists of $l$ copies of the positive integers and one copy of the odd positive integers. Equivalently, 
$$
Q(1,\ldots,1; D_{l+1}^{(2)})=\prod_{i\in\mathbb N}(1-q^i)^ l\prod_{j\in\mathbb N}(1-q^{2j-1}).
$$
\end{lemma}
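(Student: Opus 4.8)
The plan is to reduce the identity to a count of the labels of $\mathcal{N}^{(1,\dots,1)}_{D_{l+1}^{(2)}}$ and then read that count off the array. By the definition of $Q(s;A)$ together with $\dim\mathfrak{g}_j(s;A)=\dim\mathfrak{g}_{-j}(s;A)$, the latter being the number of nodes of $\mathcal{N}^{(1,\dots,1)}_{D_{l+1}^{(2)}}$ of label $j$, the asserted product is equivalent to the claim that each even positive integer occurs exactly $l$ times as a label and each odd positive integer exactly $l+1$ times; indeed, granting this,
$$
Q(1,\dots,1;D_{l+1}^{(2)})=\prod_{j\geq1}(1-q^{j})^{\,l+[\,j\text{ odd}\,]}=\prod_{i\in\mathbb{N}}(1-q^{i})^{l}\prod_{j\in\mathbb{N}}(1-q^{2j-1}).
$$
So everything comes down to determining the multiset of labels.

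First I would pin the labels down using the principal specialization. Since $s_0=\dots=s_l=1$, every arrow has degree one, so by the main property of $\mathcal{B}^{-}_{D_{l+1}^{(2)}}$ recorded in the preceding proposition (adjacent diagonals differ by a single $-\alpha_i$) the label rises by exactly one at each diagonal; hence it is constant on every diagonal and equals $d+1$ on the $d$-th one, numbering the leftmost diagonal $d=0$. Thus the number of nodes of label $j$ is the number of nodes on the diagonal $d=j-1$. The leftmost diagonal is exactly the set of $l+1$ Chevalley generators $f_0=1\otimes t^{-1},\,f_1=2\bar1\otimes t^0,\dots,f_l=0\bar l\otimes t^0$, all of label $1$; because the array is staggered these lie in the $l+1$ rows of one parity among the $2l+1$ rows, while the remaining $l$ rows first appear on the diagonal $d=1$ with label $2$. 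Reading along a row, successive nodes sit two diagonals apart, so each row carries positive integers of a single parity, and I would conclude that the $l+1$ rows beginning at label $1$ realize the odd positive integers and the $l$ rows beginning at label $2$ realize the even positive integers.

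The one step that needs justification—the main obstacle—is that this row picture is accurate only if $\mathcal{N}^{(1,\dots,1)}_{D_{l+1}^{(2)}}$ fills the staggered half-strip of $2l+1$ rows with no gaps, i.e. each row really contains every integer of its parity exactly once (this is precisely what fails for $A_{2l}^{(2)}$, whose big triangles miss their hypotenuse midpoints). I would derive fullness from the gluing construction: the catheti of the triangles $\mathcal{B}_{B_l}(\theta)\otimes t^{2i}$ are attached to the lines $\mathcal{B}_{B_l}(\omega_1)\otimes t^{2i+1}$ along congruent arrows, as in Figures \ref{F: gluing triangles with lines for D(3)(2)} and \ref{F: gluing catheti of triangles with lines in D(l+1)(2)}, leaving no holes (cf. Figures \ref{AD3} and \ref{F: arrangement of negative root vectors for D3(2)} for $l=2$). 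A clean numerical confirmation is that one period $2\delta$ spans $2(l+1)$ consecutive diagonals and consists of the $l(2l+1)$ nodes of one triangle $\mathcal{B}_{B_l}(\theta)$ together with the $2l+1$ nodes of one line $\mathcal{B}_{B_l}(\omega_1)$, i.e. $(l+1)(2l+1)$ nodes, exactly the total carried by a full strip whose diagonals alternate between $l+1$ and $l$ nodes. Granting fullness, every even-indexed diagonal holds $l+1$ nodes and every odd-indexed diagonal holds $l$ nodes, which yields the label multiplicities above and hence the product formula.
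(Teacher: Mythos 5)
Your proof is correct and follows essentially the same route as the paper, which simply observes that in the principal specialization every label increases by one per diagonal step and reads the multiplicities ($l+1$ rows of odd integers, $l$ rows of even integers) directly off the array, as in Figure \ref{fig:arrayD1}. Your additional verification of "fullness" via the node count $(l+1)(2l+1)$ per period is a welcome piece of rigor that the paper leaves implicit, but it does not change the nature of the argument.
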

\begin{example}\label{eD1...1}
In  Figure \ref{fig:arrayD1} we have the array $\mathcal{N}_{D_5^{(2)}}^{(1,1,1,1,1)}$. Its triangles (resp.  diagonals) are indicated by labels in italic (resp. bold). 
\end{example}

\begin{figure}[htb!]
$$
\begin{array}{cccccccccccccccccccc}
{\textit{1} }& & {\textit{3}}& & {\textit{5}}& & {\textit{7}}& & \textbf{9} & &\textbf{11} &  &\textit{13} & &\textit{15} &&\textit{17} & &\textit{19} & \\
&{\textit{2}} & &{\textit{4}} & &{\textit{6}} & &\textbf{8}& &\textit{10} & & \textbf{12} &  &\textit{14} & &\textit{16}&&\textit{18} &&\textit{20} \\
{\textit{1} } & &{\textit{3}} & &{\textit{5}}& &\textbf{7} & &\textit{9} &  &\textit{11} & &\textbf{13} &  &\textit{15} &&\textit{17} & &\textit{19}  \\
&{\textit{2}} & &{\textit{4}}  & &\textbf{6}& &\textit{8} & &  \textit{10}& &\textit{12} &  &\textbf{14}&  &\textit{16}&& \textit{18} &&\textit{20}\\
{\textit{1} } & &{\textit{3}} & &\textbf{5} & &\textit{7} & &\textit{9} & & \textit{11}& &\textit{13} &  &\textbf{15} &&  \textit{17} & &\textit{19} \\
&{\textit{2}}  & &\textbf{4}& &\textit{6} & &\textit{8} & &\textit{10} & & \textit{12}& &\textit{14} &  &\textbf{16}&&\textit{18} &&\textit{20} \\
{\textit{1} }& &\textbf{3}& &\textit{5} & &\textit{7} & &\textit{9} & &\textit{11} & &  \textit{13}& &\textit{15} &&\textbf{17}&&\textit{19}&\\
&\textbf{2} & &\textit{4} & &\textit{6} & &\textit{8} & &\textit{10} & &\textit{12} & & \textit{14}& &\textit{16}&&\textbf{18}&&\textit{20}\\
\textbf{1 }& &\textit{3} & &\textit{5} & &\textit{7} & &\textit{9} & &\textit{11} & & \textit{13} & &\textit{15}&&\textit{17}&&\textbf{19}&\\
\end{array}\quad\dots
$$
\caption{Array $\mathcal{N}_{D_5^{(2)}}^{(1,1,1,1,1)}$}
  \label{fig:arrayD1}
\end{figure}

\subsection{Specialized arrays of negative root vectors for $A^{(2)}_{2l}$}

As before, for fixed integers  $s_0, s_1,\ldots, s_l \in \mathbb{N}$ we  assign the  degrees
$$  \deg f_0=s_0, \  \deg f_1=s_1, \ \ldots, \  \deg f_l=s_l$$
to the arrows in the array $\mathcal{B}^{-}_{A_{2l}^{(2)}}$, thus getting the array $\mathcal{N}_{A_{2l}^{(2)}}^{(s_0,s_1,\ldots, s_l)}$ of positive integers, which we call the {\em $(s_0,s_1,\ldots, s_l)$-specialized weighted array for $A_{2l}^{(2)}$}. It is periodic with period $2(s_l+2\sum_{i=0}^{l-1}s_i)$.
By using the orientation-reversing automorphism of the Dynkin diagram of type $A_{2l}^{(2)}$ that
sends the node $i$ to   $l- i$, we obtain a weighted crystal graph of type $\mathcal{B}^{-}_{{A^{(2)}_{2l}}^T}$ and $(s_0,s_1,\ldots, s_l)$-specialized weighted crystal of negative roots for ${A^{(2)}_{2l}}^T$.
\begin{lemma}\label{L: Q(1...1;a)} 
For $l \geq 1$, the array $\mathcal{N}_{A_{2l}^{(2)}}^{(1,\ldots,1)}$ consists of $l$ copies of the  positive integers and one copy of the odd positive integers which are not congruent to $2l+1$ modulo $ 2(2l+1)$. Equivalently, 
$$
Q(1,\ldots,1; A_{2l}^{(2)})=\prod_{i\in\mathbb N}(1-q^i)^ l\prod_{j\not\equiv l+1\mod ( 2l+1 )}(1-q^{2j-1}).
$$
\end{lemma}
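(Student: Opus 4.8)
The plan is to read the multiset of labels of $\mathcal{N}_{A_{2l}^{(2)}}^{(1,\ldots,1)}$ directly off the geometry of the array $\mathcal{B}^{-}_{A_{2l}^{(2)}}$ described in the preceding proposition, exactly as for $C_l^{(1)}$ in Lemma~\ref{L: Q(1...1;C)} and for $D_{l+1}^{(2)}$ in Lemma~\ref{L: Q(1...1;D)}. In the principal specialization the label of a node is the height of the corresponding negative root; since moving one place to the upper right or to the lower right increases the label by one, consecutive entries in a single (horizontal) row differ by two. The array is periodic with period $2(2l+1)$, because $\operatorname{ht}(\delta)=2+2(l-1)+1=2l+1$, and it decomposes into the small triangles $\mathcal{B}_{B_l}(\theta)\otimes t^{2i}$ and the big triangles $\mathcal{B}_{B_l}(2\omega_1)\otimes t^{2i+1}$, the hypotenuses of the latter lying on the bottom row. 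I would treat the bottom row and the remaining $2l$ rows separately.

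First I would analyse the bottom row. Its left-most node is $f_l=11\otimes t^{-1}$, of degree $1$, and the hypotenuse labels then run through $1,3,5,\ldots$, increasing by two, except that one central position of every big triangle is vacant, since the midpoint $00$ belongs to $\mathcal{B}_{B_l}(\theta)$ and not to $\mathcal{B}_{B_l}(2\omega_1)$ (see Figures~\ref{F: gluing small and big triangles in for A4(2)} and \ref{F: gluing catheti of small and big triangles in A2l(2)}). The vacant midpoint of the big triangle over $t^{2i+1}$ has weight $(2i+1)\delta$, hence would-be degree $|2i+1|(2l+1)$, an odd multiple of $2l+1$ and therefore congruent to $2l+1$ modulo $2(2l+1)$. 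Thus the bottom row contributes exactly the odd positive integers not congruent to $2l+1$ modulo $2(2l+1)$; for $l=2$ this is the sequence $1,3,7,9,11,13,17,19,\ldots$, with $5,15,25,\ldots$ missing, in agreement with Figure~\ref{AA2}.

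Next I would show that the remaining $2l$ rows yield $l$ copies of the positive integers. Following the proof of Lemma~\ref{L: Q(1...1;C)}, I would slide each small triangle down the catheti of the neighbouring big triangle so that, row by row, the labels line up into arithmetic progressions of step one; the $2l$ non-bottom rows then assemble into $l$ complete copies of $\mathbb{N}$. A counting check supports this: one period contains $\dim L_{B_l}(\theta)+\dim L_{B_l}(2\omega_1)=(2l^2+l)+(2l^2+3l)=4l^2+4l$ nodes, of which the bottom row accounts for $2l$ (the odd integers of $[1,4l+2]$ other than $2l+1$), leaving $l(4l+2)$ nodes, i.e.\ $l$ copies of each positive integer. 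In particular the odd multiples of $2l+1$ reappear among these, produced by the $l$ interior weight-zero nodes of each big triangle, so that such integers occur with total multiplicity $l$ rather than $l+1$.

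The main obstacle is this last step: converting the pictorial sliding of triangles into an argument valid for every $l\ge 1$, namely that after the rearrangement each positive integer occurs exactly $l$ times among the non-bottom rows. I expect this to go through by the very mechanism already used for $C_l^{(1)}$ and $D_{l+1}^{(2)}$, the only new ingredient being the position of the vacant midpoints found above. It then remains only to reconcile the two indexings of the missing odd degrees: an odd integer $2j-1$ is an odd multiple of $2l+1$, i.e.\ $2j-1\equiv 2l+1\pmod{2(2l+1)}$, precisely when $2j\equiv 2(l+1)\pmod{2(2l+1)}$, that is $j\equiv l+1\pmod{2l+1}$. Hence the bottom row contributes $\prod_{j\not\equiv l+1\,(2l+1)}(1-q^{2j-1})$ and the non-bottom rows contribute $\prod_{i\in\mathbb N}(1-q^i)^{l}$, which together give the asserted product formula for $Q(1,\ldots,1;A_{2l}^{(2)})$.
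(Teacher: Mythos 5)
Your argument is correct and takes essentially the same route as the paper, which states this lemma without a written proof, relying on the observation that in the principal specialization every arrow adds $1$ to the degree together with the displayed array for $A_8^{(2)}$; in particular you correctly identify the missing bottom-row entries as the vacant hypotenuse midpoints $00\otimes t^{2i+1}$ of degree $|2i+1|(2l+1)\equiv 2l+1\pmod{2(2l+1)}$, and your node count per period is right. One small simplification: no sliding of triangles is needed here, since with all $s_i=1$ there are no degree jumps at triangle boundaries, so each of the $2l$ non-bottom rows is already a complete arithmetic progression of step $2$ (alternately the odd and the even positive integers), and pairing an odd row with an even row yields the $l$ copies of $\mathbb{N}$ directly.
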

\begin{example}\label{eA1...1}
In Figure \ref{fig:arrayA1}, we have the array $\mathcal{N}_{A_8^{(2)}}^{(1,1,1,1,1)}$.  Its triangles are again indicated by italic and bold labels.
\end{example}
\begin{figure}[htb!]
$$\begin{array}{ccccccccccccccccccc}
\textit{1}& & \textit{3}& & \textit{5}& & \textit{7}& & \textbf{9}&  & \textit{11} & &\textit{13} &  &\textit{15} & &\textit{17} &  \\
&\textit{2} & &\textit{4} & &\textit{6} & &\textbf{8}& & \textbf{10} & & \textit{12} &  &\textit{14} & &\textit{16} &  &\textit{18}\\
\textit{1 } & &\textit{3} & &\textit{5}& &\textbf{7} & &\textbf{9} & & \textbf{11}& &\textit{13} &  &\textit{15} & &\textit{17} &   \\
&\textit{2} & &\textit{4}  & &\textbf{6}& &\textbf{8}& &\textbf{10}& &  \textbf{12}& &\textit{14} &  &\textit{16}&  &\textit{18} \\
\textit{1 } & &\textit{3} & &\textbf{5} & &\textbf{7} & &\textbf{9}&  &\textbf{11} & & \textbf{13}& &\textit{15} &  &\textit{17} &  \\
&\textit{2}  & &\textbf{4}& &\textbf{6}& &\textbf{8} & &\textbf{10}&  &\textbf{12} & &  \textbf{14}& &\textit{16} &  &\textit{18}\\
\textit{1 }& &\textbf{3}& &\textbf{5}& &\textbf{7} & &\textbf{9}& & \textbf{11} & &\textbf{13} & &  \textbf{15}& &\textit{17} &\\
&\textbf{2} & &\textbf{4} & &\textbf{6} & &\textbf{8} & &\textbf{10}&  &\textbf{12} & & \textbf{14} & &  \textbf{16}& &\textit{18}\\
\textbf{1}& &\textbf{3} & &\textbf{5} & &\textbf{7}& & &  &\textbf{11} & &  \textbf{13} & &\textbf{15} & &  \textbf{17}&\\
\end{array}\quad\dots
$$
\caption{Array $\mathcal{N}_{A_{8}^{(2)}}^{(1,1,1,1,1)}$}
  \label{fig:arrayA1}
\end{figure}

\begin{example}
The labels of nodes in the specialized array $\mathcal{N}_{A_{8}^{(2)}}^{(3,2,2,3,4)}$ are the positive integers congruent to
 \begin{eqnarray}\nonumber
0,0,0,0, \pm 2,\pm 2, \pm 3,\pm 3,\pm4, \pm 5, \pm 5,\pm 7,\pm 7,\pm 7,\pm 8, \pm 9,\pm 10, \pm 10, \pm 11,  \pm 12 
\end{eqnarray} 
modulo 24 and
the positive integers congruent to
$
\pm 4,  \pm 10, \pm 14,\pm 18 
$
modulo 48; see Figure \ref{fig:cong6}.
\end{example}
\begin{figure}[htb!]
$$
\begin{array}{ccccccccccccccccccc}
\textit{3 }& & \textit{8}& & \textit{12}& & \textit{17}& & \textbf{24}&  & \textit{31} & &\textit{36} &  &\textit{40} & &\textit{45} &  \\
&\textit{5} & &\textit{10} & &\textit{15} & &\textbf{21}& & \textbf{27} & & \textit{33} &  &\textit{38} & &\textit{43} &  &\textit{48}\\
\textit{2 } & &\textit{7} & &\textit{13}& &\textbf{19} & &\textbf{24} & & \textbf{29}& & \textit{35} &  &\textit{41} & &\textit{46} &   \\
&\textit{4} & &\textit{10}  & &\textbf{17}& &\textbf{22}& &\textbf{26}& &  \textbf{31}& &\textit{38} &  &\textit{44}&  &\textit{48} \\
\textit{2 } & &\textit{7} & &\textbf{14} & &\textbf{20} & &\textbf{24}&  &\textbf{28} & & \textbf{34}& &\textit{41} &  &\textit{46} &  \\
&\textit{5}  & &\textbf{11}& &\textbf{17}& &\textbf{22} & &\textbf{26}&  &\textbf{31} & &  \textbf{37}& &\textit{43} &  &\textit{48}\\
\textit{3 }& &\textbf{9}& &\textbf{14}& &\textbf{19} & &\textbf{24}& & \textbf{29} & &\textbf{34} & &  \textbf{39}& &\textit{45} &\\
&\textbf{7} & &\textbf{12} & &\textbf{16} & &\textbf{21} & &\textbf{27}&  &\textbf{32} & & \textbf{36} & &  \textbf{41}& &\textit{48}\\
\textbf{4}& &\textbf{10} & &\textbf{14} & &\textbf{18}& & &  &\textbf{30} & &  \textbf{34} & &\textbf{38} & &  \textbf{44}&\\
\end{array}\quad\dots
$$
\caption{Array $\mathcal{N}_{A_{8}^{(2)}}^{(3,2,2,3,4)}$}
  \label{fig:cong6}
\end{figure}

\begin{example}
The labels of nodes in the specialized array  $\mathcal{N}_{{A^{(2)}_{8}}^T}^{(6,2,2,3,2)}$ are the positive integers congruent to
$$
0,0,0,0, \pm 2,\pm 2, \pm 2,\pm 3,\pm4, \pm 5, \pm 5,\pm 7,\pm 7,\pm 7,\pm 8, \pm 9, \pm 9,\pm 10, \pm 11,\pm 12
$$
modulo 24 and the positive integers
congruent to
$\pm 6,  \pm 10, \pm 14,\pm 20 $ modulo 48;
see Figure \ref{fig:cong7}.
\end{example}
\begin{figure}[htb!]
$$
\begin{array}{ccccccccccccccccccc}
\textit{2 }& & \textit{7}& & \textit{12}& & \textit{16}& & \textbf{24}&  & \textit{32} & &\textit{36} &  &\textit{41} & &\textit{46} &  \\
&\textit{5} & &\textit{9} & &\textit{14} & &\textbf{22}& & \textbf{26} & & \textit{34} &  &\textit{39} & &\textit{43} &  &\textit{48}\\
\textit{3 } & &\textit{7} & &\textit{11}& &\textbf{20} & &\textbf{24} & & \textbf{28}& &\textit{37} &  &\textit{41} & &\textit{45} &   \\
&\textit{5} & &\textit{9}  & &\textbf{17}& &\textbf{22}& &\textbf{26}& &  \textbf{31}& &\textit{39} &  &\textit{43}&  &\textit{48} \\
\textit{2 } & &\textit{7} & &\textbf{15} & &\textbf{19} & &\textbf{24}&  &\textbf{29} & & \textbf{33}& &\textit{41} &  &\textit{46} &  \\
&\textit{4}  & &\textbf{13}& &\textbf{17}& &\textbf{21} & &\textbf{27}&  &\textbf{31} & &  \textbf{35}& &\textit{44} &  &\textit{48}\\
\textit{2 }& &\textbf{10}& &\textbf{15}& &\textbf{19} & &\textbf{24}& & \textbf{29} & &\textbf{33} & &  \textbf{38}& &\textit{46} &\\
&\textbf{8} & &\textbf{12} & &\textbf{17} & &\textbf{22} & &\textbf{26}&  &\textbf{31} & & \textbf{36} & &  \textbf{40}& &\textit{48}\\
\textbf{6}& &\textbf{10} & &\textbf{14} & &\textbf{20}& & &  &\textbf{28} & &  \textbf{34} & &\textbf{38} & &  \textbf{42}&\\
\end{array}\quad\dots
$$
\caption{Array $\mathcal{N}_{{A^{(2)}_{8}}^T}^{(6,2,2,3,2)}$}
  \label{fig:cong7}
\end{figure}

\section{Explicit versions of Lepowsky's and Wakimoto's product formulas for $C_l^{(1)}$}\label{section_07}

\subsection{Lepowsky's formula of type $(1,1, \ldots,1, 1; \ C_l^{(1)})$}
Let $l$ be a nonnegative integer, and let $(s_0, s_1, \ldots , s_l)$ be $(l+1)$-tuple of positive integers. To write Lepowsky's product formula for $C_l^{(1)}$ in terms of generating function of $(s_0, s_1, \ldots , s_l)$-admissible partitions, the authors in \cite{CMPP} introduced the {\em congruence triangle} as the multiset 
\begin{eqnarray}\nonumber
&\Delta(s_1, \ldots, s_l; D_{l+1}^{(2)})=D(s_1,  \ldots, s_l ; D_{l+1}^{(2)}) \cup D(s_2,  \ldots, s_l ; D_{l+1}^{(2)})\cup \cdots \cup D(s_l; D_{l+1}^{(2)}),
\end{eqnarray}
where for $0 \leq i \leq l$, $D(s_i, s_{i+1}, \ldots, s_l; D_{l+1}^{(2)})$ denotes the set of the $2(l-i)+1$  integers
\begin{align*} 
&s_i,\, s_i+s_{i+1}, \, \ldots,\,  s_i+s_{i+1}+ \ldots +s_{l-2}+s_{l-1}, \\
&s_i+s_{i+1}+ \ldots +s_{l-1}+ s_l,\,  s_i+s_{i+1}+ \ldots +s_{l-1}+ 2s_l, \\
&s_i+s_{i+1}+ \ldots +2s_{l-1}+ 2s_l,\, 
  \ldots,\,  s_i+2s_{i+1}+ \ldots +2s_{l-1}+ 2s_l .
\end{align*}
The elements of the multiset $\Delta(s_1, \ldots, s_l; D_{l+1}^{(2)})$ correspond to the elements of the upper left right-angled triangle with vertices  $s_1$, $s_l$ and $s_1+2\sum_{i=2}^ls_i$ in the $(s_0, s_1, \ldots , s_l)$-specialized array   $\mathcal{N}_{D_{l+1}^{(2)}}^{(s_0, s_1, \ldots , s_l)}$. 

\begin{example}\label{example_xy_1}
In  Figure \ref{fig:cong1}, the elements of the sets $D(s_i,\ldots ,s_4; D_{5}^{(2)})$ are denoted by
$\textcolor{red}{\tcircled{1}},\textcolor{green}{\tcircled{2}},\textcolor{blue}{\tcircled{3}},\textcolor{orange}{\tcircled{4}} $
for $i=1,2,3,4$, respectively. Hence, for example, the symbol $\textcolor{green}{\tcircled{2}}$ represents
the elements
\beq\label{elements_pr1}
s_2,\,s_2+s_3,\,s_2+s_3+s_4,\, s_2+s_3+2s_4,\, s_2+2s_3+2s_4.
\eeq
The remaining elements of the array which belong to a diagonal (resp. a triangle) are denoted by the symbol $\bullet$ (resp. $\circ$).
\end{example}

\begin{figure}[htb!]
$$
\begin{array}{cccccccccccccccccccc}
s_4  \,\textcolor{orange}{\circled{4}}& &  \textcolor{blue}{\circled{3}}& & \textcolor{green}{\circled{2}} & & \textcolor{red}{\textcolor{red}{\circled{1}}} & & {\bullet}&  & {\bullet} & &{\circ} &  &{\circ} & &{\circ} &  &{\circ} &\\
& \textcolor{blue}{\circled{3}} & &\textcolor{green}{\circled{2}}  & &\textcolor{red}{\textcolor{red}{\circled{1}}} & &{\bullet}& &{\circ} & & {\bullet}& &{\circ} &  &{\circ} & &{\circ} &  &{\circ}\\
 s_3\, \textcolor{blue}{\circled{3}} & &\textcolor{green}{\circled{2}}  & &\textcolor{red}{\textcolor{red}{\circled{1}}}& &{\bullet} & &{\circ} &  &{\circ} & & {\bullet}& &{\circ} &  &{\circ} & &{\circ} &   \\
&\textcolor{green}{\circled{2}}  & &\textcolor{red}{\textcolor{red}{\circled{1}}}  & &{\bullet}& &{\circ}& &{\circ} & &{\circ} & &  {\bullet}& &{\circ} &  &{\circ}&  &{\circ} \\
 s_2\,\textcolor{green}{\circled{2}}  & &\textcolor{red}{\textcolor{red}{\circled{1}}} & &{\bullet} & &{\circ} & &{\circ}&  &{\circ} & & {\circ} & & {\bullet}& &{\circ} &  &{\circ} &  \\
&\textcolor{red}{\textcolor{red}{\circled{1}}}  & &{\bullet}& &{\circ}& &{\circ} & &{\circ}&  &{\circ}&  &{\circ} & &  {\bullet}& &{\circ} &  &{\circ}\\
 {s_1\,\textcolor{red}{\circled{1}} }& &{\bullet}& &{\circ} & &{\circ} & &{\circ}& &  {\circ} & &{\circ}&  &{\circ} & &  {\bullet}& &{\circ} &\\
&{\bullet} & &{\circ} & &{\circ} & &{\circ} & &{\circ}&  &{\circ} & &  {\circ} & &{\circ} & &  {\bullet}& &{\circ}\\
{s_0 \,\bullet}& &{\circ} & &{\circ} & &{\circ}& &{\circ} &  &{\circ}&  &{\circ} & &  {\circ} & &{\circ} & &  {\bullet}&\\
\end{array}\quad\dots
$$
\caption{Congruence triangle $\Delta(s_1, s_2,s_3, s_4; D_{5}^{(2)})$}
  \label{fig:cong1}
\end{figure}


In general, as we demonstrated in Subsection \ref{subsect_52}, the elements of
the array $\mathcal{N}_{D_{l+1}^{(2)}}^{(s_0,\ldots, s_l)}$
can be organized into a disjoint union of triangles and diagonals. Suppose that all triangles, with an exception of the  grey triangle $\Delta(s_1,\ldots, s_l; D_{l+1}^{(2)})$, are divided into two right-angled triangles; see Figure \ref{fig:cong101a}. 
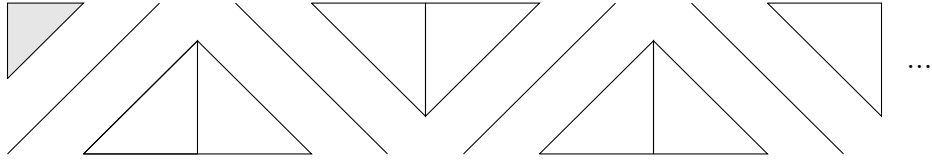
\begin{figure}[htb!]\begin{tikzpicture}
        \coordinate (a) at (0,0);
        \coordinate (b) at (0,1);
        \coordinate (c) at (1,1);
      \filldraw[draw=black, fill=gray!20] (a) -- (b) -- (c) -- (a); 
\coordinate (d) at (0,-1);
\coordinate (e) at (2,1);
\draw (d) -- (e); 
\coordinate (f) at (1,-1);
\coordinate (g1) at (2,-0.06);
\coordinate (g) at (2.5,0.5);
\coordinate (h) at (2.5,-1);
\coordinate (h1) at (2,-1);
\draw (f) -- (g) -- (h) -- (f); 
\coordinate (i) at (1,-1);
\coordinate (j) at (2.5,0.5);
\coordinate (k) at (4,-1);
        \draw (i) -- (j) -- (k) -- (i); 
\coordinate (l) at (5,-1);
\coordinate (m) at (3,1);
 \draw (l) -- (m); 
\coordinate (n) at (4,1);
\coordinate (o) at (5.5,-0.5);
\coordinate (p) at (5.5,1);
        \draw (n) -- (o) -- (p) -- (n); 
\coordinate (q) at (5.5,1);
\coordinate (r) at (5.5,-0.5);
\coordinate (s) at (7,1);
        \draw (q) -- (r) -- (s) -- (q); 
\coordinate (t) at (6,-1);
\coordinate (u) at (8,1);
\draw (t) -- (u); 
\coordinate (v) at (7,-1);
\coordinate (w) at (8.5,-1);
\coordinate (x) at (8.5,0.5);
        \draw (v) -- (w) -- (x) -- (v); 
\coordinate (y) at (10,-1);
       \draw (w) -- (x) -- (y) -- (w); 
\coordinate (z) at (9,1);
\coordinate (1) at (11,-1);
\draw (z) -- (1); 
\coordinate (2) at (10,1);
\coordinate (3) at (11.5,1);
\coordinate (4) at (11.5,-0.5);
        \draw (2) -- (3) -- (4) -- (2); 
\draw (12,0)  node[anchor=south] {...};
\end{tikzpicture}
\caption{Array $\mathcal{N}_{D_{l+1}^{(2)}}^{(s_0, s_1, \ldots , s_l)}$}
  \label{fig:cong101a}\end{figure}
Let us consider Figure \ref{fig:cong11a}, where the common  catheti of these right-angled triangles are denoted by $IJ$. 
Then the grey triangles $ABC$ are the specialization  of $\mathfrak n_-\otimes t^{2j}\subset \mathcal B_{D_{l+1}^{(2)}}^-$, $j\leq0$, the black triangles $DEF$ are the specialization  of $\mathfrak n_+\otimes t^{2j}$, $j<0$, and common catheti $IJ$ are the specialization  of $\mathfrak h\otimes t^{2j}$, $j<0$; where $\mathfrak n_-+\mathfrak h+\mathfrak n_+$ is the triangular decomposition of $\mathfrak g(B_l)$. On the other side, the lines $HG$ are the specialization of $\mathcal B_{B_{l}}(\omega_1)\otimes t^{2j-1} $, $j\leq0$.
By closely examining the weights
of  $\mathcal{B}_{D_{l+1}^{(2)}}^{-}$, one observes that the elements   of the congruence triangle $\Delta(s_1, \ldots, s_l; D_{l+1}^{(2)})$ are congruent modulo $\pi=2\sum_{i=0}^ls_i$ to the corresponding elements of other grey right-angled triangles $ABC$.  The same observation holds true for the corresponding elements of the black triangles $DEF$, lines $IJ$ and the diagonals $GH$, as indicated by the labels of their vertices. In particular,  the elements of the  common  catheti $IJ$ 
  are multiples of $2\pi$. Furthermore, the   black triangles $DEF$   consist  of   modular additive inverses modulo $\pi$ of the corresponding elements of the grey triangles $ABC$.

\begin{figure}[htb!]\begin{tikzpicture}
        \coordinate (a) at (0,0);
        \coordinate (b) at (0,1);
        \coordinate (c) at (1,1);
      \filldraw[draw=black, fill=gray!20] (a) -- (b) -- (c) -- (a); 
\fill[fill=gray!20] (a)--(b)--(c)--(a);
\coordinate (d) at (0,-1);
\coordinate (e) at (2,1);
\draw (d) -- (e); 
\coordinate (f) at (1,-1);
\coordinate (g1) at (2,-0.01);
\coordinate (g) at (2.5,0.5);
\coordinate (h) at (2.5,-1);
\coordinate (h1) at (2,-1);
\filldraw[draw=black, fill=black] (f) -- (g1) -- (h1) -- (f); 
\draw (g) -- (h); 
\coordinate (i) at (1,-1);
\coordinate (j) at (2.5,0.5);
\coordinate (j1) at (3,-1);
\coordinate (k) at (4,-1);
\coordinate (k1) at (3,-0.01);
  \filldraw[draw=black, fill=gray!20] (k) -- (j1) -- (k1) -- (k); 
\coordinate (l) at (5,-1);
\coordinate (m) at (3,1);
 \draw (l) -- (m); 
\coordinate (n) at (4,1);
\coordinate (o) at (5.5,-0.5);
\coordinate (p) at (5.5,1);
\coordinate (o1) at (5,0.01);
\coordinate (p1) at (5,1);
\filldraw[draw=black, fill=black] (n) -- (o1) -- (p1) -- (n); 
        \draw  (o) -- (p); 
\coordinate (q) at (5.5,1);
\coordinate (r) at (5.5,-0.5);
\coordinate (s) at (7,1);
\coordinate (r1) at (6,1);
\coordinate (s1) at (6,0.01);
  \filldraw[draw=black, fill=gray!20] (s) -- (r1) -- (s1) -- (s); 
\coordinate (t) at (6,-1);
\coordinate (u) at (8,1);
\draw (t) -- (u); 
\coordinate (v) at (7,-1);
\coordinate (w) at (8.5,-1);
\coordinate (x) at (8.5,0.5);
\coordinate (w2) at (8,-1);
\coordinate (x2) at (8,-0.01);
 \filldraw[draw=black, fill=black] (v) -- (w2) -- (x2) -- (v); 
\coordinate (y) at (10,-1);
       \draw (w) -- (x); 
\coordinate (x1) at (9,-1);
\coordinate (w1) at (9,-0.01);
  \filldraw[draw=black, fill=gray!20] (y) -- (x1) -- (w1) -- (y); 
 \coordinate (z) at (9,1);
\coordinate (1) at (11,-1);
\draw (z) -- (1); 
\coordinate (2) at (10,1);
\coordinate (3) at (11.5,1);
\coordinate (4) at (11.5,-0.5);
\coordinate (31) at (11,1);
\coordinate (41) at (11,0.01);
  \filldraw[draw=black, fill=black] (2) -- (31) -- (41) -- (2); 
       \draw (3) -- (4); 
\draw (12,0)  node[anchor=south] {...};
\node[draw=none] at (0.01,-0.2) {A};
\node[draw=none] at (0,1.2) {C};
\node[draw=none] at (1,1.2) {B};
\node[draw=none] at (1,-1.2) {D};
\node[draw=none] at (2,-1.2) {E};
\node[draw=none] at (2,0.2) {F};
\node[draw=none] at (2.5,0.7) {I};
\node[draw=none] at (2.5,-1.2) {J};
\node[draw=none] at (3.14,0.2) {A};
\node[draw=none] at (3,-1.2) {C};
\node[draw=none] at (4,-1.2) {B};
\node[draw=none] at (4,1.2) {D};
\node[draw=none] at (5,1.2) {E};
\node[draw=none] at (5,-0.2) {F};
\node[draw=none] at (5.5,-0.7) {I};
\node[draw=none] at (5.5,1.2) {J};
\node[draw=none] at (6.01,-0.2) {A};
\node[draw=none] at (6,1.2) {C};
\node[draw=none] at (7,1.2) {B};
\node[draw=none] at (7,-1.2) {D};
\node[draw=none] at (8,-1.2) {E};
\node[draw=none] at (8,0.2) {F};
\node[draw=none] at (8.5,0.7) {I};
\node[draw=none] at (8.5,-1.2) {J};
\node[draw=none] at (9.14,0.2) {A};
\node[draw=none] at (9,-1.2) {C};
\node[draw=none] at (10,-1.2) {B};
\node[draw=none] at (10,1.2) {D};
\node[draw=none] at (11,1.2) {E};
\node[draw=none] at (11,-0.2) {F};
\node[draw=none] at (11.5,-0.7) {I};
\node[draw=none] at (11.5,1.2) {J};
\node[draw=none] at (0,-1.2) {G};
\node[draw=none] at (2,1.2) {H};
\node[draw=none] at (3,1.2) {G};
\node[draw=none] at (5,-1.2) {H};
\node[draw=none] at (6,-1.2) {G};
\node[draw=none] at (8,1.2) {H};
\node[draw=none] at (9,1.2) {G};
\node[draw=none] at (11,-1.2) {H};
\end{tikzpicture}
\caption{Positions of congruent elements in the array ${\mathcal N}_{D_{l+1}^{(2)}}^{(s_0, s_1, \ldots , s_l)}$}
  \label{fig:cong11a}\end{figure}
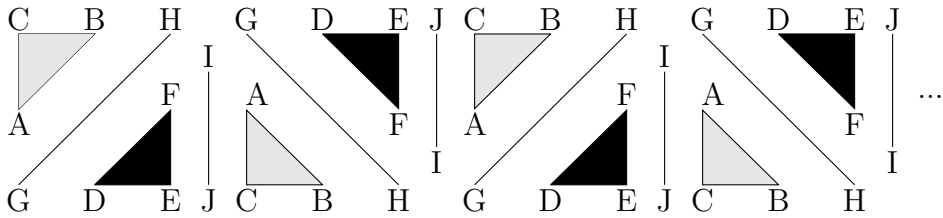

\begin{remark}
By a  reasoning similar to above we can write formulas for $Q(s_0, s_1, \dots. s_l; C_{l}^{(1)})$ and $Q(s_0, s_1, \dots. s_l; A_{2l}^{(2)})$ needed for  Theorems \ref{T: Wakimotos formula for 21...12} and \ref{T: Wakimotos formula for 1...12} below. 
Positions of congruent elements in the array ${\mathcal N}_{C_{l}^{(1)}}^{(s_0, s_1, \ldots , s_l)}$ look like Figure \ref{fig:cong11a} without lines $GH$.

On the other side, the array $\mathcal{B}_{A_{2l}^{(2)}}^{-}$ consists of $\mathfrak n_-\otimes t^0$, small triangles $L_{B_l}(\theta)\otimes t^{2j}$ and big triangles $L_{B_l}(2\theta)\otimes t^{2j+1}$, $j<0$. After specialization $\mathfrak n_-\otimes t^0$ becomes the gray triangle in the upper left corner of  $\mathcal N_{A_{2l}^{(1)}}^{(s_0, s_1, \dots, s_1)}$,
and the small triangle becomes the union of the black triangle $DEF$, the vertical line $IJ$ and the gray triangle $ABC$ with the hypotenuse in the top row.  Finally, after specialization the big triangle has its hypotenuse in the bottom row, and the remaining part is the union of the black triangle $DEF$, the vertical line $IJ$ and the gray triangle $ABC$. For $l=2$ the last statement is obvious if we put the (rectified) small triangle in the lower left corner of Figure \ref{F: crystal of the tensor square of vector representation for B2} and the big triangle without hypotenuse in the upper left corner: one is the transpose of the other.
\end{remark}

\begin{example}
We return to the setting of Example \ref{example_xy_1} to  illustrate the preceding discussion. 
Consider Figure \ref{fig:cong11}. It shows the array $\mathcal{N}_{D_{5}^{(2)}}^{(s_0, s_1, s_2 , s_3,s_4)}$ modulo $\pi =2\sum_{i=0}^4s_i$, where its triangle (resp. diagonal) elements are labeled by the upper case letters, lower case letters and zeros $\textbf{0} $ (resp. bullets $\bullet_i$ for $i=1, \dots, 9$). Note that we use the upper case letters for the elements of the congruence triangle  $\Delta(s_1, s_2,s_3, s_4; D_{5}^{(2)})$ and that the common catheti $IJ$ from Figure \ref{fig:cong11a} consists of zeros $\textbf{0}, \textbf{0}, \textbf{0}, \textbf{0}$.

The triangle in Figure \ref{fig:cong11} with the vertices $\textbf{0}$, $\textit{d}$, $\textit{D}$ (which are underlined in the figure) and hypotenuse $\textit{d}\textit{D}$ in the bottom row is the ``$(s_0, s_1, s_2 , s_3,s_4)\mod\pi$''-specialization of the triangle 
$L_{B_l}(\theta)\otimes t^2\subset \mathcal{B}_{D_{5}^{(2)}}^-$.
From Figures \ref{F: crystal of the tensor square of vector representation for B2} and \ref{F: crystal of the tensor square of vector representation for B3} we see that the positions of root vectors for roots $\alpha$ and $-\alpha$ are symmetric with respect to the ``line corresponding to $\mathfrak h$'' (i.e. $IJ$ from Figure \ref{fig:cong11a}). Hence $A+a=B+b=\dots=P+p=0$.
\end{example}

\begin{figure}[htb!]
$$
\begin{array}{cccccccccccccccccccc}
\textit{A}& &  \textit{B}& & \textit{C} & & \textit{D} & & \bullet_{9}&  &\bullet_{1} & &\textit{d} &  &\textit{c} & &\textit{b} &  &\textit{a} &\\
& \textit{E} & &\textit{F}  & &\textit{G} & &\bullet_{8}& &\underline{\textbf{0}} & &\bullet_{2}& &\textit{g} &  &\textit{f} & &\textit{e} &  &\textbf{0}\\
 \textit{H} & &\textit{I} & &\textit{J}& &\bullet_{7}& &\textit{p} &  &\textit{P} & & \bullet_{3}& &\textit{j} &  &\textit{i} & &\textit{h} &   \\
&\textit{K}  & &\textit{L}  & &\bullet_{6}& &\textit{o} & &\textbf{0} & &\textit{O}& & \bullet_{4}& &\textit{l} &  &\textit{k}&  &\textbf{0} \\
 \textit{M}  & &\textit{N} & &\bullet_{5} & &\textit{n} & &\textit{m}&  &\textit{M} & & \textit{N} & & \bullet_{5}& &\textit{n} & &\textit{m} &  \\
&\textit{O}  & &\bullet_{4}& &\textit{l} &  &\textit{k}&  &\textbf{0}&  &\textit{K}&  &\textit{L} & &\bullet_{6}& &\textit{o} & &\textbf{0}\\
\textit{P}& &\bullet_{3}& &\textit{j} &  &\textit{i} & &\textit{h}& &  \textit{H} & &\textit{I}&  &\textit{J} & &  \bullet_{7}& &\textit{p} &\\
&\bullet_{2} & &\textit{g} & &\textit{f} & &\textit{e} & &\textbf{0}&  &\textit{E} & &  \textit{F} & &\textit{G} & & \bullet_{8}& &\textbf{0}\\
\bullet_{1}& &\underline{\textit{d}} & &\textit{c} & &\textit{b}& &\textit{a} &  &\textit{A}&  &\textit{B} & &  \textit{C} & &\underline{\textit{D}} & & \bullet_{9}& 
\end{array}\quad\dots
$$
\caption{Congruent elements in the array $\mathcal{N}_{D_{5}^{(2)}}^{(s_0, s_1, s_2 , s_3,s_4)}$}
  \label{fig:cong11}
\end{figure}

By using the above analysis of the array $\mathcal{N}_{D_{l+1}^{(2)}}^{(s_0,s_1,\ldots,s_l)}$, one obtains Lepowsky's product formulas for the principally specialized character of a standard $C^{(1)}_l$-modules:


\begin{theorem}\label{T: Lepowskys formula}
Lepowsky's product formula for the principally specialized character of the standard $C^{(1)}_l$-module $L_{C_l^{(1)}}(\Lambda)$ of highest weight $\Lambda=\sum_{i=0}^l k_i\Lambda_i$ can be written as
\begin{align}\label{f01}
&\ch^{(1,\ldots,1; \ C_l^{(1)})}L(k_0, k_1, \ldots, k_{l-1}, k_l)\nonumber\\ 
=& \frac{\prod_{i \equiv a, \pm b \mod  2(k+l+1), \  a\in  \{0\}^l \cup D(k_0+1,k_1+1,\ldots ,k_l+1; D_{l+1}^{(2)}), \ b \in \Delta(k_1+1,k_2+1, \ldots, k_{l-1}+1, k_l+1; D_{l+1}^{(2)})}(1-q^i)}{\prod_{j \in \mathbb{N}}(1-q^j)^{l}\prod_{j\in\mathbb{N}}(1-q^{2j-1})},\end{align}
where $\{0\}^l$ denotes the multiset consisting of $l$ copies of $0$. 
\end{theorem}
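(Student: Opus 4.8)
The plan is to deduce \eqref{f01} from Lepowsky's product formula of Section~\ref{section_03} and then to evaluate the resulting numerator combinatorially by reading off the labels of the specialized array $\mathcal{N}_{D_{l+1}^{(2)}}^{(k_0+1,\ldots,k_l+1)}$. Since $(C_l^{(1)})^T=D_{l+1}^{(2)}$ and $s_\Lambda=(k_0,\ldots,k_l)$, Lepowsky's formula gives
$$\ch^{(1,\ldots,1;\,C_l^{(1)})}L(\Lambda)=\frac{Q(k_0+1,\ldots,k_l+1;D_{l+1}^{(2)})}{Q(1,\ldots,1;C_l^{(1)})}.$$
The denominator is computed by Lemma~\ref{L: Q(1...1;C)} and equals $\prod_{j\in\mathbb N}(1-q^j)^l\prod_{j\in\mathbb N}(1-q^{2j-1})$, which is exactly the denominator of \eqref{f01}. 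It therefore remains to identify the numerator $Q(k_0+1,\ldots,k_l+1;D_{l+1}^{(2)})$ with the product in the numerator of \eqref{f01}.

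By definition $Q(s;D_{l+1}^{(2)})=\prod_{j}(1-q^j)^{\dim\mathfrak g_j(s;D_{l+1}^{(2)})}$, so every node of $\mathcal{N}_{D_{l+1}^{(2)}}^{(s)}$ with label $r$ contributes a factor $(1-q^r)$; thus I must list the labels of the array, with multiplicities. For $s=(k_0+1,\ldots,k_l+1)$ the period is $\pi=2\sum_{i=0}^l(k_i+1)=2(k+l+1)$, where $k=\sum_{i=0}^l k_i$ is the level (all comarks of $C_l^{(1)}$ being $1$). Because the array is $\pi$-periodic (Subsection~\ref{subsect_52}), for each residue $r$ with $0<r$ the labels lying in the class $r\bmod\pi$ form the full progression $r,\,r+\pi,\,r+2\pi,\ldots$, so a residue occurring with multiplicity $m$ in one period contributes exactly $\prod_{i\equiv r}(1-q^i)^m$. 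The computation therefore reduces to determining the multiset of residues modulo $\pi$ contributed by a single period.

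Here I would use the decomposition of the array into grey triangles, black triangles, common catheti and diagonals (Figures~\ref{fig:cong101a} and \ref{fig:cong11a}), together with the weight bookkeeping $\wt(b\otimes t^i)=\wt_{\mathfrak h}(b)+i\delta$ and $F_s(e^{-\delta})=q^{\pi/2}$. Reading the cumulative $s$-degrees along the arrows, the grey triangle $\mathfrak n_-\otimes t^{2j}$ reduces modulo $\pi$ to the congruence triangle $\Delta(k_1+1,\ldots,k_l+1;D_{l+1}^{(2)})$; the diagonal $\mathcal B_{B_l}(\omega_1)\otimes t^{2j-1}$ reduces to $D(k_0+1,\ldots,k_l+1;D_{l+1}^{(2)})$; the $l$ nodes of the cathetus $\mathfrak h\otimes t^{2j}$ reduce to $l$ copies of $0$; and, by the symmetry of the root vectors for $\alpha$ and $-\alpha$ about the line corresponding to $\mathfrak h$ (as noted in Figure~\ref{fig:cong11}), the black triangle $\mathfrak n_+\otimes t^{2j}$ reduces to the additive inverses of the grey triangle, i.e. $-\Delta(k_1+1,\ldots,k_l+1;D_{l+1}^{(2)})$. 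Collecting these, one period contributes the residues $a\in\{0\}^l\cup D(k_0+1,\ldots,k_l+1;D_{l+1}^{(2)})$ together with $\pm b$ for $b\in\Delta(k_1+1,\ldots,k_l+1;D_{l+1}^{(2)})$, which is precisely the index set in the numerator of \eqref{f01}.

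The main obstacle is this last residue computation: one must verify that the cumulative degrees along the catheti of each triangle and along each diagonal reduce modulo $\pi$ to exactly the prescribed elements of $D$ and $\Delta$ and their negatives, with the correct multiplicities, and that no double counting occurs at the gluing seams where the $0$-arrows join consecutive triangles and diagonals. This is precisely where the ``main property'' of $\mathcal{B}^{-}_{D_{l+1}^{(2)}}$---that weights on adjacent diagonals differ by a negative simple root, with the gluing $0$-arrow accounting for the shift of $\pi$ between successive periods---does the work, so the argument hinges on carefully tracing this property through the explicit array as in Figures~\ref{fig:cong11a} and \ref{fig:cong11}.
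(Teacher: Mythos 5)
Your proposal follows the paper's proof essentially verbatim: apply Lepowsky's product formula to get $\ch^{(1,\ldots,1;\,C_l^{(1)})}L(\Lambda)=Q(k_0+1,\ldots,k_l+1;D_{l+1}^{(2)})/Q(1,\ldots,1;C_l^{(1)})$, evaluate the denominator by Lemma \ref{L: Q(1...1;C)}, and read off the numerator from the residues modulo $2(k+l+1)$ of the array $\mathcal{N}_{D_{l+1}^{(2)}}^{(k_0+1,\ldots,k_l+1)}$ via its decomposition into grey triangles, black triangles, common catheti and diagonals as in Figures \ref{fig:cong101a} and \ref{fig:cong11a}. Your write-up actually makes explicit the residue bookkeeping (grey triangle $\mapsto\Delta$, diagonal $\mapsto D$, catheti $\mapsto\{0\}^l$, black triangle $\mapsto-\Delta$) that the paper compresses into the phrase ``direct calculations on the elements of the array,'' but the argument is the same.
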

\begin{proof}
The principal specialization of the character of $L(\Lambda)$ is given by
\beq\label{Lep1}
\ch^{(1,\ldots,1; \ C_l^{(1)})}L(k_0, k_1, \ldots, k_{l-1}, k_l)=\frac{Q(k_0+1,k_1+1,\ldots,k_l+1; D_{l+1}^{(2)})}{Q(1,\ldots,1; C_l^{(1)})}.
\eeq
 Lemma \ref{L: Q(1...1;C)} implies that the denominator $Q(1,\ldots,1; C_l^{(1)})$ of  \eqref{Lep1}  is equal to $$\prod_{j \in \mathbb{N}}(1-q^j)^{l}\prod_{j\in\mathbb{N}}(1-q^{2j-1}).$$

The elements in the array $\mathcal{N}_{D_{l+1}^{(2)}}^{(k_0+1,k_1+1,\ldots,k_l+1)}$ are congruent modulo $2(k+l+1)$. By using the direct calculations on the elements of the array $\mathcal{N}_{D_{l+1}^{(2)}}^{(k_0+1,k_1+1,\ldots,k_l+1)}$, follows that the numerator $Q(k_0+1,k_1+1,\ldots,k_l+1; D_{l+1}^{(2)})$ of   \eqref{Lep1}  can be written as 
$$
\prod_{ i \equiv a, \pm b \mod  2(k+l+1), \,  a\in  \{0\}^l \cup D(k_0+1,k_1+1,\ldots ,k_l+1; D_{l+1}^{(2)}), \, b \in \Delta(k_1+1,k_2+1, \ldots, k_{l-1}+1, k_l+1; D_{l+1}^{(2)}) }(1-q^i).\qedhere
$$
\end{proof}
\begin{remark}
Lepowsky's product formula is written in this way in \cite{CMPP}.  In the same vein we define three types of sets $D(s_i, s_{i+1}, \ldots, s_l; X_l^{(r)})$ and corresponding types of congruence triangles, where $X_l^{(r)}=C_l^{(1)}, \ A^{(2)}_{2l}$ or ${A^{(2)}_{2l}}^T$.
\end{remark}

\subsection{Wakimoto's formulas of type $(2,1, \ldots,1, 2; \ C_l^{(1)})$}

For   $1 \leq i \leq l$, we define the set  $D(s_i, s_{i+1}, \ldots, s_l; C_l^{(1)})$   of cardinality $2(l-i)+1$ which consists of the integers
\begin{align*} 
 & s_i,\, s_i+s_{i+1},\, \ldots, \, s_i+s_{i+1}+ \ldots +s_{l-2}+s_{l-1},  \,
 s_i+s_{i+1}+ \ldots +s_{l-1}+ s_l,    \\
 & s_i+s_{i+1}+ \ldots +2s_{l-1}+ s_l,\,\ldots,\, 2s_i+2s_{i+1}+ \ldots +2s_{l-1}+ s_l .
\end{align*}
Moreover, we introduce the {\em congruence triangle} $\Delta(s_1, \ldots, s_l; C_l^{(1)})$ as 
\begin{equation*}
\Delta(s_1, \ldots, s_l; C_l^{(1)})= 
D(s_1,  \ldots, s_l; C_l^{(1)} ) \cup D(s_2,  \ldots, s_l; C_l^{(1)})\cup \cdots \cup D(s_l; C_l^{(1)}). 
\end{equation*}
The elements of the multiset $\Delta(s_1, \ldots, s_l; C_l^{(1)})$ correspond to  the elements of the upper left right-angled triangle with vertices $s_1$, $s_l$ and $s_l+2\sum_{i=1}^{l-1}s_i$ in the $(s_0, s_1, \ldots , s_l)$--specialized array of negative roots $\mathcal{N}_{C_l^{(1)}}^{(s_0, s_1, \ldots , s_l)}$; see Figure \ref{fig:cong11b}. In   Figure \ref{fig:cong2}, the elements of $D(s_i,\ldots ,s_4; C_{4}^{(1)}) $ are denoted by
$\textcolor{red}{\tcircled{1}},\textcolor{green}{\tcircled{2}},\textcolor{blue}{\tcircled{3}},\textcolor{orange}{\tcircled{4}},$
for $i=1,2,3,4$, respectively, e.g., the symbol $\textcolor{green}{\tcircled{2}}$ represents
the elements
$$
s_2,\,s_2+s_3,\,s_2+s_3+s_4,\, s_2+2s_3+s_4,\,2 s_2+2s_3+s_4.
$$  
The remaining elements of the array   are denoted by the symbol  $\circ$.
\begin{figure}[htb!]
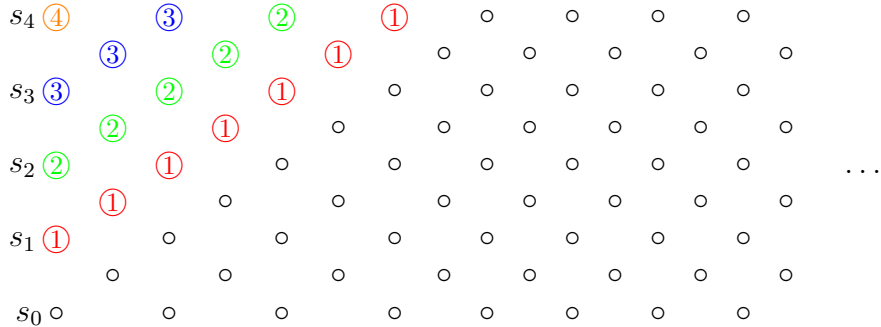

$$
\begin{array}{cccccccccccccccc}
 s_4\,\textcolor{orange}{\circled{4}}& &   \textcolor{blue}{\circled{3}}& & \textcolor{green}{\circled{2}}& &  \textcolor{red}{\circled{1}}& & {\circ} & &{\circ} &  &{\circ} & &{\circ} &  \\
&\textcolor{blue}{\circled{3}} & & \textcolor{green}{\circled{2}} & &\textcolor{red}{\circled{1}}& &{\circ}& &{\circ} & & {\circ} &  &{\circ} & &{\circ}\\
 s_3\,\textcolor{blue}{\circled{3}} & & \textcolor{green}{\circled{2}} & &\textcolor{red}{\circled{1}}& &{\circ} & &{\circ} &  &{\circ} & &{\circ} &  &{\circ} &  \\
& \textcolor{green}{\circled{2}} & &\textcolor{red}{\circled{1}}   & &{\circ}& &{\circ} & &  {\circ}& &{\circ} &  &{\circ}&  &{\circ} \\
 s_2\, \textcolor{green}{\circled{2}} & &\textcolor{red}{\circled{1}} & &{\circ} & &{\circ} & &{\circ} & & {\circ}& &{\circ} &  &{\circ} &  \\
&\textcolor{red}{\circled{1}}    & &{\circ}& &{\circ} & &{\circ} & &{\circ} & &  {\circ}& &{\circ} &  &{\circ}\\
 s_1\,\textcolor{red}{\circled{1}}& &{\circ}& &{\circ} & &{\circ} & &{\circ} & &{\circ} & &  {\circ}& &{\circ} &\\
&{\circ} & &{\circ} & &{\circ} & &{\circ} & &{\circ} & &{\circ} & & {\circ}& &{\circ}\\
{s_0 \,\circ }& &{\circ} & &{\circ} & &{\circ} & &{\circ} & &{\circ} & & {\circ} & &{\circ}&\\
\end{array}\quad\dots
$$
\caption{Congruence triangle $\Delta(s_1, s_2,s_3, s_4; C_{4}^{(1)})$}
  \label{fig:cong2}
\end{figure}

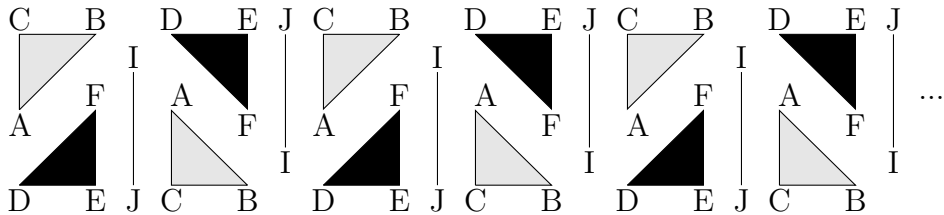
\begin{figure}[htb!]\begin{tikzpicture}
        \coordinate (a) at (0,0);
        \coordinate (b) at (0,1);
        \coordinate (c) at (1,1);
        \draw (a) -- (b);
      \filldraw[draw=black, fill=gray!20] (a) -- (b) -- (c) -- (a); 
\coordinate (f) at (0,-1);
\coordinate (g1) at (1,-0.01);
\coordinate (g) at (1.5,0.5);
\coordinate (h) at (1.5,-1);
\coordinate (h1) at (1,-1);
\filldraw[draw=black, fill=black] (f) -- (g1) -- (h1) -- (f); 
\draw (g) -- (h); 
\coordinate (i) at (1,-1);
\coordinate (j) at (2.5,0.5);
\coordinate (j1) at (2,-1);
\coordinate (k) at (3,-1);
\coordinate (k1) at (2,-0.01);
  \filldraw[draw=black, fill=gray!20] (k) -- (j1) -- (k1) -- (k); 
\coordinate (n) at (2,1);
\coordinate (o) at (3.5,-0.5);
\coordinate (p) at (3.5,1);
\coordinate (o1) at (3,0.01);
\coordinate (p1) at (3,1);
\filldraw[draw=black, fill=black] (n) -- (o1) -- (p1) -- (n); 
        \draw  (o) -- (p); 
\coordinate (q) at (3.5,1);
\coordinate (r) at (3.5,-0.5);
\coordinate (s) at (5,1);
\coordinate (r1) at (4,1);
\coordinate (s1) at (4,0.01);
  \filldraw[draw=black, fill=gray!20] (s) -- (r1) -- (s1) -- (s); 
\coordinate (v) at (4,-1);
\coordinate (w) at (5.5,-1);
\coordinate (x) at (5.5,0.5);
\coordinate (w2) at (5,-1);
\coordinate (x2) at (5,-0.01);
 \filldraw[draw=black, fill=black] (v) -- (w2) -- (x2) -- (v); 
\coordinate (y) at (6,-1);
       \draw (w) -- (x); 
\coordinate (x1) at (7,-1);
\coordinate (w1) at (6,-0.01);
  \filldraw[draw=black, fill=gray!20] (y) -- (x1) -- (w1) -- (y); 
\coordinate (2) at (6,1);
\coordinate (3) at (7.5,1);
\coordinate (4) at (7.5,-0.5);
\coordinate (31) at (7,1);
\coordinate (41) at (7,0.01);
  \filldraw[draw=black, fill=black] (2) -- (31) -- (41) -- (2); 
       \draw (3) -- (4); 
\coordinate (5) at (8,1);
\coordinate (6) at (8,0.01);
\coordinate (7) at (9,1);
  \filldraw[draw=black, fill=gray!20] (5) -- (6) -- (7) -- (5); 
\coordinate (8) at (8,-1);
\coordinate (9) at (9.5,-1);
\coordinate (10) at (9.5,0.5);
\coordinate (81) at (9,-1);
\coordinate (91) at (9,-0.01);
 \filldraw[draw=black, fill=black] (8) -- (81) -- (91) -- (8); 
\draw (9)--(10);
\coordinate (11) at (10,1);
\coordinate (12) at (11.5,1);
\coordinate (13) at (11.5,-0.5);
\coordinate (111) at (11,1);
\coordinate (121) at (11,0.01);
 \filldraw[draw=black, fill=black] (11) -- (111) -- (121) -- (11); 
\draw (12)--(13);
\coordinate (14) at (10,-1);
\coordinate (141) at (11,-1);
\coordinate (151) at (10,-0.01);
 \filldraw[draw=black, fill=gray!20] (14) -- (141) -- (151) -- (14); 
\draw (12,0)  node[anchor=south] {...};
\node[draw=none] at (0.01,-0.2) {A};
\node[draw=none] at (0,1.2) {C};
\node[draw=none] at (1,1.2) {B};
\node[draw=none] at (0,-1.2) {D};
\node[draw=none] at (1,-1.2) {E};
\node[draw=none] at (1,0.2) {F};
\node[draw=none] at (1.5,0.7) {I};
\node[draw=none] at (1.5,-1.2) {J};
\node[draw=none] at (2.14,0.2) {A};
\node[draw=none] at (2,-1.2) {C};
\node[draw=none] at (3,-1.2) {B};
\node[draw=none] at (2,1.2) {D};
\node[draw=none] at (3,1.2) {E};
\node[draw=none] at (3,-0.2) {F};
\node[draw=none] at (3.5,-0.7) {I};
\node[draw=none] at (3.5,1.2) {J};
\node[draw=none] at (4.01,-0.2) {A};
\node[draw=none] at (4,1.2) {C};
\node[draw=none] at (5,1.2) {B};
\node[draw=none] at (4,-1.2) {D};
\node[draw=none] at (5,-1.2) {E};
\node[draw=none] at (5,0.2) {F};
\node[draw=none] at (5.5,0.7) {I};
\node[draw=none] at (5.5,-1.2) {J};
\node[draw=none] at (6.14,0.2) {A};
\node[draw=none] at (6,-1.2) {C};
\node[draw=none] at (7,-1.2) {B};
\node[draw=none] at (6,1.2) {D};
\node[draw=none] at (7,1.2) {E};
\node[draw=none] at (7,-0.2) {F};
\node[draw=none] at (7.5,-0.7) {I};
\node[draw=none] at (7.5,1.2) {J};
\node[draw=none] at (8.14,-0.2) {A};
\node[draw=none] at (8,1.2) {C};
\node[draw=none] at (9,1.2) {B};
\node[draw=none] at (8,-1.2) {D};
\node[draw=none] at (9,-1.2) {E};
\node[draw=none] at (9,0.2) {F};
\node[draw=none] at (9.5,0.7) {I};
\node[draw=none] at (9.5,-1.2) {J};
\node[draw=none] at (10.14,0.2) {A};
\node[draw=none] at (10,-1.2) {C};
\node[draw=none] at (11,-1.2) {B};
\node[draw=none] at (10,1.2) {D};
\node[draw=none] at (11,1.2) {E};
\node[draw=none] at (11,-0.2) {F};
\node[draw=none] at (11.5,-0.7) {I};
\node[draw=none] at (11.5,1.2) {J};
\end{tikzpicture}
\caption{Positions of congruent elements in the array ${\mathcal N}_{C_l^{(1)}}^{(s_0, s_1, \ldots , s_l)}$}
  \label{fig:cong11b}\end{figure}

\begin{example}\label{e1}
For $s=(4,3,2,3,4)$, we have
\begin{align*}
D(3,2,3,4; C_{4}^{(1)}) &= \{3,5,8,12,15,17,20\},\\
D(2,3,4; C_{4}^{(1)}) &= \{2,5,9,12,14\},\\
D(3,4; C_{4}^{(1)})   &= \{3,7,10\},\\
D(4; C_{4}^{(1)})&= \{4\}.
\end{align*}
The elements of these sets correspond to the italic nodes in the upper left triangle of the array $\mathcal{N}_{C_4^{(1)}}^{(4,3,2,3,4)}$ on Figure \ref{fig:cong3corr}.  
\end{example}
\begin{theorem}\label{T: Wakimotos formula for 21...12}
Wakimoto's product formula for the $(2,1, \ldots,1,2)$-specialized character of the standard module $L_{C_l^{(1)}}(\Lambda)$ of highest weight $\Lambda=\sum_{i=0}^l k_i\Lambda_i$    \eqref{W83_3}  can be written as
\begin{align}\label{f1}
&\ch^{(2,1, \ldots,1,2; \ C_l^{(1)})}L(k_0, k_1, \ldots, k_{l-1}, k_l)\nonumber\\ 
=& \frac{\prod_{i \equiv a,\pm b \mod  2(k+l+1), \ a \in \{0\}^l,
\ b \in \Delta(k_1+1,k_2+1, \ldots, k_{l-1}+1, 2(k_l+1); \, C_l^{(1)})}(1-q^i)}{\prod_{i \in \mathbb{N}}(1-q^i)^{l-1}\prod_{j \in \mathbb{N}}(1-q^{2j})\prod_{r\equiv  (l+1) \mod 2(l+1)}(1-q^r)}.\end{align}
\end{theorem}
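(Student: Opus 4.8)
The plan is to follow the same two-step strategy as in the proof of Theorem~\ref{T: Lepowskys formula}, starting from Wakimoto's formula \eqref{W83_3}, which expresses the $(2,1,\dots,1,2)$-specialized character as the ratio
$$
\ch^{(2,1,\dots,1,2;\,C_l^{(1)})}L(\Lambda)=\frac{Q\big(2(k_0+1),k_1+1,\dots,k_{l-1}+1,2(k_l+1);\,C_l^{(1)}\big)}{Q(2,1,\dots,1,2;\,C_l^{(1)})}.
$$
The denominator is handled immediately: by the lemma evaluating $Q(2,1,\dots,1,2;C_l^{(1)})$ (Lemma~\ref{L: Q(21...1;C)}) it equals $\prod_{i\in\mathbb N}(1-q^i)^{l-1}\prod_{j\in\mathbb N}(1-q^{2j})\prod_{r\equiv (l+1)\bmod 2(l+1)}(1-q^r)$, which is precisely the denominator appearing in \eqref{f1}. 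Thus the entire content of the theorem reduces to evaluating the numerator $Q(s;C_l^{(1)})$ for the specialization $s=(2(k_0+1),k_1+1,\dots,k_{l-1}+1,2(k_l+1))$.

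For the numerator I would read off the labels of the specialized array $\mathcal N_{C_l^{(1)}}^{(s)}$, using that $Q(s;C_l^{(1)})=\prod_{j\geq1}(1-q^j)^{m_j}$, where $m_j=\dim\mathfrak g_j(s;C_l^{(1)})$ equals the number of nodes of $\mathcal N_{C_l^{(1)}}^{(s)}$ carrying the label $j$. First I compute the period $\pi=s_0+2\sum_{i=1}^{l-1}s_i+s_l=2\sum_{i=0}^{l}(k_i+1)=2(k+l+1)$, which matches the modulus in \eqref{f1}. Next I invoke the decomposition of the array described in the remark following Theorem~\ref{T: Lepowskys formula} (the relevant picture is Figure~\ref{fig:cong11b}, i.e. Figure~\ref{fig:cong11a} with the diagonal lines $GH$ removed): every adjoint triangle $\mathcal B_{C_l}(\theta)\otimes t^{j}$ with $j<0$ splits into a grey triangle ($\mathfrak n_-$), a black triangle ($\mathfrak n_+$) and a common catheti $IJ$ ($\mathfrak h$), while the upper-left triangle is $\mathfrak n_-\otimes t^0$ alone.

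The residues modulo $\pi$ of the labels are then identified piece by piece. The grey triangles contribute, by the very definition of the congruence triangle, the residues $b$ with $b\in\Delta(k_1+1,\dots,k_{l-1}+1,2(k_l+1);C_l^{(1)})$; the $l$-dimensional catheti $\mathfrak h$ contribute the residue $0$ with multiplicity $l$, i.e. the multiset $\{0\}^l$; and the black triangles contribute the modular additive inverses $-b$, because the positions of the root vectors for $\alpha$ and $-\alpha$ are symmetric with respect to the $\mathfrak h$-line (the relation $A+a\equiv0$ in the notation of Figure~\ref{fig:cong11}). Assembling these, the multiset of label-residues is exactly $\{\,a,\pm b:\ a\in\{0\}^l,\ b\in\Delta(k_1+1,\dots,k_{l-1}+1,2(k_l+1);C_l^{(1)})\,\}$, so that $Q(s;C_l^{(1)})=\prod_{i\equiv a,\pm b\,\bmod\,2(k+l+1)}(1-q^i)$; dividing by the denominator above yields \eqref{f1}.

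The main obstacle is the residue bookkeeping in the last paragraph: one must verify that, after reduction modulo $\pi$, the grey, black and Cartan parts of every triangle $\mathcal B_{C_l}(\theta)\otimes t^{j}$, $j<0$, reproduce precisely the residues coming from the upper-left fundamental triangle, with no overlaps beyond those already recorded as multiplicities, and in particular that the $\mathfrak n_+$-part is the exact modular negative of the $\mathfrak n_-$-part. This is a direct but somewhat delicate weight computation: it amounts to checking that the $s$-degrees of the $C_l$-roots, shifted by multiples of $\pi=\deg_s\delta$, land in the claimed congruence classes, using $\theta=2\epsilon_1$ together with the gluing rule $-\alpha_0=\theta-\delta$ that governs how successive triangles are stacked. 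Once this symmetry is confirmed the remaining identification is immediate.
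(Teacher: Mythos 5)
Your proposal is correct and takes essentially the same route as the paper, which leaves this proof implicit: it appeals to Wakimoto's formula \eqref{W83_3}, to the lemma computing $Q(2,1,\ldots,1,2;C_l^{(1)})$ for the denominator, and to the remark that the congruence analysis of Figure \ref{fig:cong11a} carries over to $\mathcal{N}_{C_l^{(1)}}^{(s_0,\ldots,s_l)}$ once the lines $GH$ are removed, with the grey triangles, black triangles and Cartan catheti supplying the residues $b$, $-b$ and $\{0\}^l$ modulo $2(k+l+1)$ exactly as you describe. Your period computation and identification of the numerator's congruence classes match the paper's intended argument.
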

 
\begin{remark}
We note that the congruence triangle does not depend on $s_0=2(k_0+1)$, but the product formula depends on $k_0$ via the modulus in the congruence condition $\mod 2(k+l+1)$.
\end{remark} 
 
 \begin{example}\label{example_79a} 
From Examples \ref{e1} and \ref{e21..12} follows   the specialized character formula 
\begin{align*}
\ch^{(2,1, 1,1,2; \ C_4^{(1)})}L(1,2,1,2,1)=&\prod_{j \equiv \pm 1, \pm 1, \pm 2, \pm 6, \pm 6, \pm 8, \pm 11, \pm 11 \mod 24} (1-q^j)^{-1}\\
&\times \prod_{\substack{i \in \mathbb{N}\\ i \not\equiv 0, 12 \mod 24}}(1-q^i)^{-1}\prod_{\substack{s \in 2\mathbb{N}\\ s \not\equiv 0, 12 \mod 24}}(1-q^s)^{-1}\\
& \times \prod_{r \equiv 5  \mod 10} (1-q^r)^{-1}.\end{align*}
\end{example} 

\subsection{Wakimoto's formulas of types $(1,\ldots,1,2; \ C_l^{(1)})$ and $(2,1, \ldots,1; \ C_l^{(1)})$}

 For   $0 \leq i \leq l-1$, we define the set  $D(s_i, s_{i-1}, \ldots, s_0; {A^{(2)}_{2l}})$   of cardinality $2i+1$ which consists of the integers
\begin{align*}
& s_{i}, \,
s_{i}+s_{i-1},\, 
\ldots,\,
 s_{i}+s_{i-1}+ \ldots +s_{1}+s_{0}, \\
& s_{i}+s_{i-1}+ \ldots +s_{1}+2s_{0},\,
 s_{i}+s_{i-1}+ \ldots  +2s_{1}+2s_{0},
\\
&    \ldots,\, s_{i}+2s_{i-1}+ \ldots +2s_{1}+2s_{0}.
\end{align*} 
Moreover, we introduce the {\em congruence triangle}
\begin{align*}\nonumber
\Delta(s_{l-1}, \ldots, s_0; A^{(2)}_{2l})=D(s_{l-1},  \ldots, s_0; {A^{(2)}_{2l}}) \cup D(s_{l-2},  \ldots, s_0;{A^{(2)}_{2l}})\cup \cdots \cup D(s_0; {A^{(2)}_{2l}}).
\end{align*}
Its  elements belong to  the upper left right-angled triangle with vertices $s_{l-1}$, $s_0$ and $s_{l-1}+2\sum_{i=0}^{l-2}s_i$ of the $(s_0, s_1, \ldots , s_l)$-specialized array of negative roots $\mathcal{N}_{A^{(2)}_{2l}}^{(s_0,s_1,\ldots,s_l)}$. In   Figure \ref{fig:cong5}, the elements of $D(s_3, s_2,s_1,s_0; A_{8}^{(2)})$ are denoted by $\textcolor{blue}{\tcircled{3}},\textcolor{green}{\tcircled{2}},\textcolor{red}{\tcircled{1}},\textcolor{cyan}{\tcircled{0}},$ for $i=3,2,1,0$, respectively. For example,  the symbol $\textcolor{green}{\tcircled{2}}$ represents the elements
$$
s_2,\,s_2+s_1,\,s_2+s_1+s_0,\, s_2+s_1+2s_0,\, s_2+2s_1+2s_0.
$$
The remaining elements of the array   are denoted by  the symbols $\circ$ and $\bullet$.
\begin{figure}[htb!]
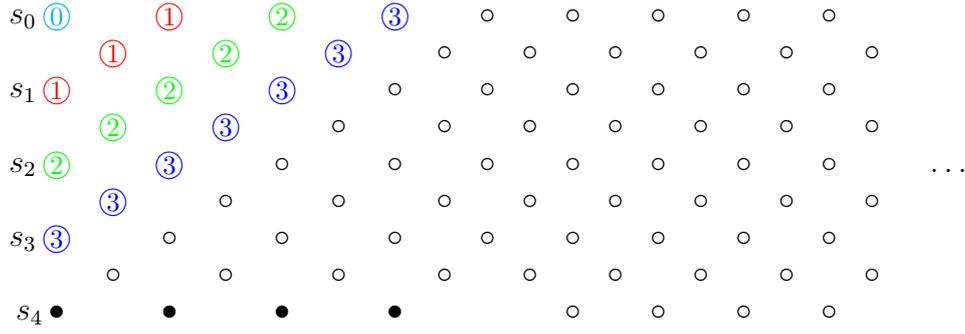

$$
\begin{array}{ccccccccccccccccccc}
  s_0 \,\textcolor{cyan}{\circled{0}} & & \textcolor{red}{\circled{1}}& & \textcolor{green}{\circled{2}}& & \textcolor{blue}{\circled{3}}& & {\circ}&  & {\circ} & &{\circ} &  &{\circ} & &{\circ} &  \\
&\textcolor{red}{\circled{1}} & &\textcolor{green}{\circled{2}} & & \textcolor{blue}{\circled{3}} & &{\circ}& & {\circ} & & {\circ} &  &{\circ} & &{\circ} &  &{\circ}\\
 s_1\,\textcolor{red}{\circled{1}} & &\textcolor{green}{\circled{2}} & & \textcolor{blue}{\circled{3}}& &{\circ} & &{\circ} & & {\circ}& &{\circ} &  &{\circ} & &{\circ} &   \\
&\textcolor{green}{\circled{2}} & & \textcolor{blue}{\circled{3}}  & &{\circ}& &{\circ}& &{\circ}& &  {\circ}& &{\circ} &  &{\circ}&  &{\circ} \\
 s_2\,\textcolor{green}{\circled{2}}& & \textcolor{blue}{\circled{3}} & &{\circ} & &{\circ} & &{\circ}&  &{\circ} & & {\circ}& &{\circ} &  &{\circ} &  \\
& \textcolor{blue}{\circled{3}}  & &{\circ}& &{\circ}& &{\circ} & &{\circ}&  &{\circ} & &  {\circ}& &{\circ} &  &{\circ}\\
 s_3\, \textcolor{blue}{\circled{3}}& &{\circ}& &{\circ}& &{\circ} & &{\circ}& & {\circ} & &{\circ} & &  {\circ}& &{\circ} &\\
&{\circ} & &{\circ} & &{\circ} & &{\circ} & &{\circ}&  &{\circ} & & {\circ} & &  {\circ}& &{\circ}\\
 s_4 \,\bullet& &\bullet & &\bullet & &\bullet& & &  &{\circ} & &  {\circ} & &{\circ} & &  {\circ}&\\
\end{array}\quad\dots
$$
\caption{Congruence triangle $\Delta(s_3, s_2,s_1,s_0; A_{8}^{(2)})$}
  \label{fig:cong5}
\end{figure}

To write Wakimoto's formulas for the specialized character of type $(1,1, \ldots,1,2)$ (see  \eqref{W83_2} ), we introduce the set 
\beq\nonumber
S(s_l, s_{l-1}, \ldots, s_1; {A^{(2)}_{2l}})=\left\{s_l, s_l+2s_{l-1}, \ldots, s_l+2s_{l-1}+\cdots +2s_1\right\}
\eeq
of cardinality $l$. In   Figure \ref{fig:cong5}, the elements of   $S(s_4,s_3,s_2,s_1; A_{8}^{(2)})$,  which are
$$s_4,\, s_4+2s_3,\,s_4+2s_3+2s_2,\,s_4+2s_3+2s_2+2s_1,$$
are indicated by the symbol $\bullet$.

\begin{example}\label{e3}
For $s=(3,2,2,3,4)$, we have
\begin{align*}
D(3,2,2,3; A_{8}^{(2)}) &=  \{3,5,7,10,13,15,17\},\\
D(2,2,3; A_{8}^{(2)}) &=  \{2,4,7,10, 12\},\\
D(2,3; A_{8}^{(2)})   &=  \{2,5,8\},\\
D(3; A_{8}^{(2)})&=  \{3\},\\
S(4,3,2,2; A_{8}^{(2)})&=  \{4,10,14,18\}.\\
\end{align*}
\end{example}
\begin{theorem}\label{T: Wakimotos formula for 1...12}
The Wakimoto product formula for the $(1,1, \ldots,1,2)$-specialized character of the standard module $L_{C_l^{(1)}}(\Lambda)$ of highest weight $\Lambda=\sum_{i=0}^l k_i\Lambda_i$ can be written as
\begin{align}
&\ch^{(1,1, \ldots,1,2; \ C_l^{(1)})}L(k_0, k_1, \ldots, k_{l-1}, k_l)\nonumber\\
=& \prod_{i \in \mathbb{N}}(1-q^i)^{-l}
\prod_{i \equiv \pm a, b \mod 2(k+l+1), \ a \in \Delta(k_{l-1}+1, \ldots, k_0+1; A^{(2)}_{2l}),  b \in \{0\}^l} (1-q^i)\nonumber\\
&\times \prod_{j\equiv \pm c \mod 4(k+l+1), \  c \in S(2(k_l+1), k_{l-1}+1, \ldots, k_{1}+1; {A^{(2)}_{2l}})}(1-q^j).\label{f3}
\end{align}
\end{theorem}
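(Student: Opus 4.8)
The plan is to derive the identity \eqref{f3} directly from Wakimoto's formula \eqref{W83_2}, in the same manner in which Theorem \ref{T: Lepowskys formula} was obtained from \eqref{Lep1}. Formula \eqref{W83_2} reads
\begin{equation*}
\ch^{(1,\ldots,1,2;\ C_l^{(1)})}L(k_0,\ldots,k_l)=\phi(q)^{-l}\,Q\bigl(k_0+1,\ldots,k_{l-1}+1,2(k_l+1);A_{2l}^{(2)}\bigr),
\end{equation*}
so the factor $\phi(q)^{-l}=\prod_{i\in\mathbb N}(1-q^i)^{-l}$ is already the first product in \eqref{f3}, and the whole problem reduces to rewriting the single specialization $Q(s;A_{2l}^{(2)})$, with $s=(k_0+1,\ldots,k_{l-1}+1,2(k_l+1))$, as the remaining two products. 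First I would read off the multiset of labels of the specialized array $\mathcal N_{A_{2l}^{(2)}}^{(s)}$; for this choice of $s$ its period is $2\bigl(s_l+2\sum_{i=0}^{l-1}s_i\bigr)=4(k+l+1)$, while the exponent of $q$ in $F_s(e^{-\delta})$ is $2s_0+\cdots+2s_{l-1}+s_l=2(k+l+1)$, exactly one half of the array period.

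The decisive step is the decomposition of $\mathcal B_{A_{2l}^{(2)}}^-$ recalled in the remark following Theorem \ref{T: Wakimotos formula for 21...12}: the array is built from the initial triangle $\mathfrak n_-\otimes t^0$, the small triangles $\mathcal B_{B_l}(\theta)\otimes t^{2i}$ and the big triangles $\mathcal B_{B_l}(2\omega_1)\otimes t^{2i+1}$. The adjoint and Cartan data — the gray triangle $ABC$, the black triangle $DEF$ and the vertical segment $IJ$ of Figure \ref{fig:cong11b} — fill the whole of each small triangle and the body of each big triangle, and hence appear at \emph{every} integral power of $t$; their labels are therefore periodic with the half-period $2(k+l+1)$. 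Since the positions of the root vectors for $\alpha$ and $-\alpha$ are symmetric about the $\mathfrak h$-line (as in Figure \ref{fig:cong11}), this family contributes precisely the residues $\pm a\bmod 2(k+l+1)$ for $a\in\Delta(k_{l-1}+1,\ldots,k_0+1;A_{2l}^{(2)})$, together with the $l$ copies of $0$ (the term $b\in\{0\}^l$ of \eqref{f3}) coming from $\mathfrak h$. This is the second product in \eqref{f3}; the precise list of residues is confirmed by the direct count in Example \ref{e3} and Figure \ref{fig:cong6}.

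The third product, whose modulus is the full period $4(k+l+1)$, comes from the one feature of the array lying outside the adjoint/Cartan body: the hypotenuse of each big triangle $\mathcal B_{B_l}(2\omega_1)$, with its missing midpoint, which contributes exactly the $l$ labels of $S(2(k_l+1),k_{l-1}+1,\ldots,k_1+1;A_{2l}^{(2)})$ and their reflections. As these diagonals occur only at the odd powers of $t$, they are not folded back by the half-period and repeat only with period $4(k+l+1)$, giving the residues $\pm c\bmod 4(k+l+1)$, $c\in S(\ldots)$; compare the entries $\pm4,\pm10,\pm14,\pm18\bmod 48$ in Figure \ref{fig:cong6}. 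A dimension check supports the split: one full period of $\mathcal N_{A_{2l}^{(2)}}^{(s)}$ carries $\dim\mathcal B_{B_l}(\theta)+\dim\mathcal B_{B_l}(2\omega_1)=(2l^2+l)+(2l^2+3l)=4l^2+4l$ labels, of which $4l^2+2l$ are accounted for by the $\pm a$ together with the zeros, and the remaining $2l$ by the $\pm c$.

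The main obstacle is exactly this separation of the array into a period-$2(k+l+1)$ part and a period-$4(k+l+1)$ part. One must verify that the adjoint/Cartan content of the big triangles really is congruent modulo $2(k+l+1)$ to that of the small triangles — so that these two contributions merge into a single family carrying the $\pm$ symmetry and the $l$ zeros — while the big-triangle hypotenuses remain a genuinely separate period-$4(k+l+1)$ family. The doubling $s_l=2(k_l+1)$ together with the missing midpoints on the big-triangle hypotenuses makes this bookkeeping delicate, and it is the careful inspection of the weights of $\mathcal B_{A_{2l}^{(2)}}^-$, organized as in Figures \ref{fig:cong5} and \ref{fig:cong11a}, that guarantees no residue class is over- or under-counted.
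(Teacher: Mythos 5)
Your proposal is correct and follows essentially the same route the paper takes: starting from Wakimoto's formula \eqref{W83_2} and then evaluating $Q(k_0+1,\ldots,k_{l-1}+1,2(k_l+1);A_{2l}^{(2)})$ by decomposing the specialized array $\mathcal N_{A_{2l}^{(2)}}^{(s)}$ into the adjoint/Cartan content (gray and black triangles plus the $IJ$ lines, congruent mod $2(k+l+1)$ and symmetric about $\mathfrak h$) and the big-triangle hypotenuses (periodic only mod $4(k+l+1)$), exactly as described in the paper's remark on the structure of $\mathcal B_{A_{2l}^{(2)}}^{-}$ following Theorem \ref{T: Lepowskys formula}. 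Your dimension count $(2l^2+l)+(2l^2+3l)=4l^2+2l+2l$ and the verification against Example \ref{e3} and Figure \ref{fig:cong6} confirm the bookkeeping the paper leaves implicit.
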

 
 \begin{example}\label{example_712a} 
From  Example \ref{e3} follows  the specialized character formula  
\begin{align*} 
\ch^{(1,1, 1,1,2; \ C_4^{(1)})}L(2,1,1,2,1)=&\prod_{\substack{i \in \mathbb{N}\\ i \not\equiv  0 \mod 24}}(1-q^i)^{-1}\prod_{k \equiv \pm 6,\pm 20  \mod 48} (1-q^k)^{-1}\\
&\hspace{-30pt}\times\prod_{j \equiv \pm 1,\pm 1, \pm 1, \pm 2, \pm 3, \pm 4,  \pm 5, \pm 6, \pm 6, \pm 8, \pm 8, \pm 9, \pm 9, \pm 11,\pm 11,12 \mod 24} (1-q^j)^{-1}.
\end{align*}
\end{example} 

For  $1 \leq i \leq l$,   define  the {\em congruence triangles} as multisets
\begin{align}\nonumber
&\Delta(s_1, \ldots, s_l; {A^{(2)}_{2l}}^T)=D(s_1,  \ldots, s_l; {A^{(2)}_{2l}}^T) \cup D(s_2,  \ldots, s_l;{A^{(2)}_{2l}}^T)\cup \cdots \cup D(s_l; {A^{(2)}_{2l}}^T),
\end{align}
where the sets $D(s_i, s_{i+1}, \ldots, s_l; {A^{(2)}_{2l}}^T )$ consist of integers
\begin{align}\nonumber
 &s_i,\, s_i+s_{i+1}, \,\ldots, \,s_i+ \ldots +s_{l-1}+ s_l, \\
 \nonumber
&s_i+s_{i+1}+ \ldots +s_{l-1}+2 s_l, \,\ldots,\, s_i+2s_{i+1}+ \ldots +2s_{l-1}+2 s_l.
\end{align}
Furthermore, let
\beq\nonumber
S(s_0,  \ldots, s_{l-1}; {A^{(2)}_{2l}}^T )=\left\{s_0, s_0+2s_1, \ldots, s_0+2s_1+\cdots +2s_{l-1}\right\}.
\eeq
\begin{example}\label{e4}
For $s=(2,3,2,2,6)$, we have
\begin{align*}
D(2,2,3,2;{A^{(2)}_{8}}^T) &= \{2,4,7,9,11,14,16\},\\
D(2,3,2;{A^{(2)}_{8}}^T) &= \{2,5,7,9, 12\},\\
D(3,2; {A^{(2)}_{8}}^T)   &= \{3,5,7\},\\
D(2; {A^{(2)}_{8}}^T)&= \{2\},\\
S(6,2,2,3; {A^{(2)}_{8}}^T)&=\{6,10,14,20\}.
\end{align*}
\end{example}

\begin{theorem}\label{T: Wakimotos formula for 21...1}
The Wakimoto formula for the specialized character of type $(2,1, \ldots,1,1)$ of the standard module of highest weight $\Lambda=\sum_{i=0}^l k_i\Lambda_i$  can be written as
\begin{align}
\ch^{(2,1, \ldots,1,1; \ C_l^{(1)})}L(k_0, k_1, \ldots, k_{l-1}, k_l)
=&  \prod_{i \in \mathbb{N}}(1-q^i)^{-l}
\prod_{\substack{ i \equiv \pm a, b \mod 2(k+l+1), \\ a \in \Delta(k_1+1, \ldots, k_l+1; {A^{(2)}_{2l}}^T),\, b \in \{0\}^l}} (1-q^i)\nonumber\\
&\times \prod_{\substack{j\equiv \pm c \mod 4(k+l+1), \\  c \in S(2(k_0+1), k_1+1 \ldots, k_{l-1}+1; {A^{(2)}_{2l}}^T )}} (1-q^j).\label{f2}
\end{align}
\end{theorem}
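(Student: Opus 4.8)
The plan is to start from Wakimoto's formula \eqref{W83_1}, which already expresses the $(2,1,\ldots,1)$-specialized character as
$$\ch^{(2,1,\ldots,1;C_l^{(1)})}L(\Lambda)=\phi(q)^{-l}\,Q(k_l+1,\ldots,k_1+1,2(k_0+1);A^{(2)}_{2l}),$$
and to rewrite the right-hand side explicitly. The factor $\phi(q)^{-l}=\prod_{i\in\mathbb N}(1-q^i)^{-l}$ is exactly the first product in \eqref{f2}, so the whole task reduces to expanding the remaining $Q$-factor as an infinite periodic product read off from a specialized weighted array. First I would pass to the transpose by applying the orientation-reversing diagram automorphism $i\mapsto l-i$ of $A^{(2)}_{2l}$; since this automorphism sends the generalized Cartan matrix to its transpose and reverses the specialization sequence, it gives
$$Q(k_l+1,\ldots,k_1+1,2(k_0+1);A^{(2)}_{2l})=Q(2(k_0+1),k_1+1,\ldots,k_{l-1}+1,k_l+1;{A^{(2)}_{2l}}^T).$$

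Next I would interpret this $Q$ through the specialized array $\mathcal N_{{A^{(2)}_{2l}}^T}^{\sigma}$ with $\sigma=(2(k_0+1),k_1+1,\ldots,k_{l-1}+1,k_l+1)$: by definition $Q(\sigma;{A^{(2)}_{2l}}^T)=\prod_{j\geq1}(1-q^j)^{d_j}$, where $d_j$ is the number of nodes of the array carrying the label $j$, so it suffices to count labels by residue class. By the Proposition describing $\mathcal B^{-}_{A^{(2)}_{2l}}$ (read through the automorphism), the array is the union of the initial $\mathfrak n_-\otimes t^0$ triangle together with alternating small triangles $\mathcal B_{B_l}(\theta)\otimes t^{2i}$ and big triangles $\mathcal B_{B_l}(2\omega_1)\otimes t^{2i+1}$; applying the period formula $2(s_l+2\sum_{i=0}^{l-1}s_i)$ to the untransposed specialization $(k_l+1,\ldots,k_1+1,2(k_0+1))$ yields the period $4(k+l+1)$, where $k=\sum_{i=0}^l k_i$ is the level. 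The entire analysis runs exactly parallel to the proof of Theorem \ref{T: Wakimotos formula for 1...12}, which treats the untransposed $(1,\ldots,1,2)$ case via \eqref{W83_2}, only with every index reversed.

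The heart of the argument is then the congruence bookkeeping, which I would organize as in Figures \ref{fig:cong5} and \ref{fig:cong7}. Splitting each small and big triangle into a grey sub-triangle $ABC$ (a copy of $\mathfrak n_-$), a black sub-triangle $DEF$ (a copy of $\mathfrak n_+$), and a common cathetus $IJ$ (a copy of $\mathfrak h$), one checks modulo $\pi=2(k+l+1)$ that every grey sub-triangle reduces to the single congruence triangle $\Delta(k_1+1,\ldots,k_l+1;{A^{(2)}_{2l}}^T)$, giving the residues $\pm a$; the symmetry of root vectors for $\alpha$ and $-\alpha$ about the $\mathfrak h$-line forces each black sub-triangle to be the additive inverse modulo $\pi$ of the corresponding grey one, which is why $a$ and $-a$ occur together; and the $l$ common catheti reduce to $0$, contributing the multiset $\{0\}^l$. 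The remaining labels come precisely from the hypotenuses of the big triangles, and these assemble into the set $S(2(k_0+1),k_1+1,\ldots,k_{l-1}+1;{A^{(2)}_{2l}}^T)$, occurring in the $\pm c$ pattern forced by the $\alpha\leftrightarrow-\alpha$ symmetry of the root system.

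The main obstacle I expect is precisely the treatment of this $S$-set. Because the big and small triangles alternate, the half-period shift by $2(k+l+1)$ is a symmetry of the array that exchanges big and small triangles, and this is what makes the $\Delta$- and $\{0\}^l$-contributions periodic already modulo $\pi=2(k+l+1)$; however, the midpoint-missing hypotenuses of the big triangles have no counterpart in the small triangles, so they repeat only with the full period $2\pi=4(k+l+1)$. Establishing rigorously that these labels are periodic modulo $4(k+l+1)$ (and not $2(k+l+1)$), and that they realize exactly $\pm c$ with $c\in S$, is the delicate step; everything else is the routine, if lengthy, direct calculation on the weights of $\mathcal B^{-}_{{A^{(2)}_{2l}}^T}$ already illustrated in Example \ref{e4}. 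Once the $S$-contribution is pinned down, combining the three pieces with the prefactor $\phi(q)^{-l}$ yields \eqref{f2}.
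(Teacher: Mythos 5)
Your proposal is correct and follows essentially the same route the paper intends: the paper proves only Theorem \ref{T: Lepowskys formula} in detail and states that the remaining formulas follow by the same congruence analysis on the relevant specialized arrays, which is exactly what you carry out here — starting from \eqref{W83_1}, passing to ${A^{(2)}_{2l}}^T$ via the node-reversing relabeling, and reading off the $\Delta$-, $\{0\}^l$- and $S$-contributions from the grey/black triangles, the catheti, and the big-triangle hypotenuses, with the correct moduli $2(k+l+1)$ and $4(k+l+1)$. Your identification of the hypotenuses as the only features with period $2\delta$ rather than $\delta$ is precisely the point that forces the separate $S$-product modulo $4(k+l+1)$, consistent with Lemma \ref{L: Q(1...1;a)} and Figures \ref{fig:cong5} and \ref{fig:cong7}.
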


\begin{example}\label{example_715a} 
Using Example \ref{e4} we obtain   the specialized character formula 
\begin{align}\nonumber
\ch^{(2,1, 1,1,1; \ C_4^{(1)})}L(2,1,1,2,1)= &\prod_{\substack{i \in \mathbb{N}\\i \not\equiv 0\mod 24}}(1-q^i)^{-1}\prod_{k \equiv \pm 4,\pm 18 \mod 48} (1-q^k)^{-1}\\
\nonumber
&\hspace{-40pt}\times\prod_{j \equiv \pm 1,\pm 1, \pm 1, \pm 3,  \pm 3, \pm 4, \pm 5, \pm 6, \pm 6, \pm 8, \pm 8, \pm 9, \pm 10, \pm 11,\pm 11,12 \mod 24} (1-q^j)^{-1}. 
\end{align}
\end{example} 
 \begin{example}\label{example_716a}
The specialized characters 
$$\ch^{(2,1, 1,1,1; \ C_4^{(1)})}L(1,2,1,2,1)\qquad\text{and}\qquad \ch^{(1,1, 1,1,2; \ C_4^{(1)})}L(1,2,1,2,1)$$
coincide and they are equal to
$$\prod_{\substack{i \in \mathbb{N}\\ i \not\equiv 0,12 \mod 24}}(1-q^i)^{-1} \prod_{j \equiv \pm 1,\pm 1, \pm 1, \pm 2, \pm 3, \pm 4, \pm 4, \pm 6, \pm 6, \pm 6, \pm 8, \pm 8, \pm 9, \pm 10, \pm 11,\pm 11,\pm 11  \mod 24} (1-q^j)^{-1}.$$
\end{example} 

\subsection{Wakimoto's formulas of type $(s_0,s_1, \ldots,s_l; \ C_l^{(1)})$}

From 
\eqref{W83_4}--\eqref{W83_6}, we have the following three sets of Wakimoto's formulas of type $(s_0,s_1, \ldots,s_l; \ C_l^{(1)})$. Using notation from previous subsections we will write these formulas and give some examples.
\begin{theorem}\label{T: Wakimotos formula for 2n-1,...,2n-1,n-1}
The first set of Wakimoto's formulas of type $(s_0,s_1, \ldots,s_l; \ C_l^{(1)})$ is
\begin{align}\label{ps1}
&\ch^{(s_0,s_1, \ldots,s_l; \ C_l^{(1)})}L(2n-1, 2n-1, \ldots,2n-1,n-1)\\
\nonumber
=& \frac{\prod_{i \equiv \pm a, b \mod  n(2\sum_{j=0}^ls_j+2\sum_{j=1}^{l-1}s_j), \ a \in \Delta(2ns_{1}, ,\ldots, 2ns_{l-1}, ns_l;{A_{2l}^{(2)}}^T), \, b \in \{0\}^l} (1-q^i)}{\prod_{i \equiv \pm a, b \mod (\sum_{j=0}^ls_j+\sum_{j=1}^{l-1}s_j), \ a \in \Delta(s_1,s_2, \ldots, s_l;C_l^{(1)}), \, b \in \{0\}^l}(1-q^i)}\\
\nonumber
&\times \prod_{j\equiv \pm c \mod 2n(2\sum_{m=0}^ls_m+2\sum_{m=1}^{l-1}s_m), \  c \in S(2ns_0, 2ns_{1}, \ldots,  2ns_{l-1}; {A^{(2)}_{2l}}^T)}(1-q^j).
\end{align}
\end{theorem}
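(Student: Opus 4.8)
The plan is to deduce the identity directly from Wakimoto's formula \eqref{W83_4}, which already expresses $\ch^{(s;C_l^{(1)})}L(2n-1,\ldots,2n-1,n-1)$ as the quotient $Q(ns_l,2ns_{l-1},\ldots,2ns_0;A_{2l}^{(2)})/Q(s;C_l^{(1)})$, and then to rewrite each of the two factors as an explicit infinite product by reading off the congruence classes occurring in the corresponding specialized weighted array. Thus the proof splits into an analysis of the numerator array, attached to $A_{2l}^{(2)}$, and of the denominator array, attached to $C_l^{(1)}$; these two analyses are carried out independently and then combined.

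For the numerator I first use the orientation-reversing automorphism $i\mapsto l-i$ of the Dynkin diagram of type $A_{2l}^{(2)}$, which converts a specialization with reversed arguments into the transposed datum, so that $Q(ns_l,2ns_{l-1},\ldots,2ns_0;A_{2l}^{(2)})=Q(2ns_0,2ns_1,\ldots,2ns_{l-1},ns_l;{A_{2l}^{(2)}}^T)$. I then examine the specialized array $\mathcal{N}_{{A_{2l}^{(2)}}^T}^{(2ns_0,\ldots,2ns_{l-1},ns_l)}$ exactly as in the proof of Theorem \ref{T: Wakimotos formula for 21...1}: decomposing the small triangles into the grey triangle (specialization of $\mathfrak{n}_{-}$), the vertical catheti $IJ$ (specialization of $\mathfrak{h}$) and the black triangle (specialization of $\mathfrak{n}_{+}$), while the big triangles contribute the same data together with their missing midpoints and bottom-row hypotenuses. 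Reading off the congruence classes then yields the congruence triangle $\Delta(2ns_1,\ldots,2ns_{l-1},ns_l;{A_{2l}^{(2)}}^T)$ together with its modular inverses and the zeros coming from the catheti, all taken modulo the half-period $n(2\sum_{j=0}^{l}s_j+2\sum_{j=1}^{l-1}s_j)$, plus the elements of $S(2ns_0,\ldots,2ns_{l-1};{A_{2l}^{(2)}}^T)$ and their negatives modulo the full period $2n(2\sum_{m=0}^{l}s_m+2\sum_{m=1}^{l-1}s_m)$; this is precisely the numerator of \eqref{ps1}.

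For the denominator I apply the congruence analysis of the array $\mathcal{N}_{C_l^{(1)}}^{(s_0,\ldots,s_l)}$ described in the Remark following Theorem \ref{T: Lepowskys formula} and summarized in Figure \ref{fig:cong11b}: the grey and black right-angled triangles $ABC$ and $DEF$ furnish $\pm a$ for $a\in\Delta(s_1,\ldots,s_l;C_l^{(1)})$, while the common catheti $IJ$ furnish the $l$ zeros, all counted modulo the period $\sum_{j=0}^{l}s_j+\sum_{j=1}^{l-1}s_j$. Substituting the two resulting products into \eqref{W83_4} and cancelling gives \eqref{ps1}. The principal difficulty will be the bookkeeping in the numerator: one must verify that the two moduli differ by exactly a factor of two, and in particular that the hypotenuses and missing midpoints of the big triangles—which carry the ``long'' generator $f_l=11\otimes t^{-1}$—produce exactly the set $S$ counted modulo the doubled period rather than the half-period, so that the final congruence conditions coincide with those stated in \eqref{ps1}.
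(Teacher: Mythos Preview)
Your proposal is correct and follows exactly the approach the paper intends: start from Wakimoto's formula \eqref{W83_4}, pass to the ${A_{2l}^{(2)}}^T$ labeling via the orientation-reversing map $i\mapsto l-i$, and then read off both $Q$-factors from the structure of the specialized arrays (congruence triangles, catheti $IJ$, and the bottom-row hypotenuse set $S$) as in the proof of Theorem~\ref{T: Lepowskys formula} and the subsequent Remark. The paper does not spell out a separate proof for this theorem, but your argument is precisely the intended derivation; one small caveat is that you cite ``the proof of Theorem~\ref{T: Wakimotos formula for 21...1}'', which is likewise left implicit in the paper, so you should regard your own array analysis as the primary justification rather than deferring to that reference.
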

\begin{example}\label{example_718a}
 For $l=2$, $s=(3,1,1)$ and $n=1$, we have $$\ch^{(3,1,1; \ C_2^{(1)})}L(1,1,0)=\frac{\prod_{i \equiv \pm 5 \mod 12}  (1+q^i)}{\prod_{j\in  \mathbb{N} } (1-q^{2j-1})\prod_{r \equiv \pm  6  \mod 24} (1-q^r)}.$$ 
For $l=2$, $s=(3,1,1)$ and $n=2$, we have $$\ch^{(3,1,1; \ C_2^{(1)})}L(3,3,1)=\frac{\prod_{i \equiv \pm 10 \mod 24}  (1+q^i)}{\prod_{j\in  \mathbb{N} } (1-q^{2j-1})^2\prod_{r \equiv \pm 6, 12  \mod 24} (1-q^r)}.$$
 For $l=2$, $s=(3,1,1)$ and $n=3$, we have 
\begin{align*}
\ch^{(3,1,1; \ C_2^{(1)})}L(5,5,2)=
 &\prod_{i \in \mathbb{N}, i \not\equiv 0,\pm 30, 36 \mod 72} (1-q^i)^{-1}\\
&\times \prod_{r \equiv \pm 1, \pm 5, \pm 7, \pm 11, \pm 13,\pm 15, \pm 17  \mod 36} (1-q^r)^{-1} .
\end{align*} 
\end{example}

\begin{theorem}\label{T: Wakimotos formula for n-1,2n-1,...,2n-1}
The second set of Wakimoto's formulas of type $(s_0,s_1, \ldots,s_l; \ C_l^{(1)})$ is
\begin{align}\label{ps2}
&\ch^{(s_0,s_1, \ldots,s_l; \ C_l^{(1)})}L(n-1, 2n-1, \ldots,2n-1,2n-1)\\
\nonumber
=& \frac{\prod_{i \equiv \pm a, b \mod n(2\sum_{j=0}^ls_j+2\sum_{j=1}^{l-1}s_j), \ a \in \Delta(2ns_{l-1}, ,\ldots, 2ns_{1}, ns_0;A_{2l}^{(2)}),\, b \in\{0\}^l} (1-q^i)}{\prod_{i \equiv \pm a, b \mod (\sum_{j=0}^ls_j+\sum_{j=1}^{l-1}s_j), \ a \in \Delta(s_1,s_2, \ldots, s_l;C_l^{(1)}),\,  b \in \{0\}^l}(1-q^i)}\\
\nonumber
&\times \prod_{j\equiv \pm c \mod 2n(2\sum_{m=0}^ls_m+2\sum_{m=1}^{l-1}s_m), \  c \in S(2ns_l, 2ns_{l-1}, \ldots,  2ns_{1}; {A^{(2)}_{2l}})}(1-q^j).\end{align}
\end{theorem}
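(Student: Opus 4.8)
The plan is to start from Wakimoto's formula \eqref{W83_5}, which for the highest weight $\Lambda=(n-1)\Lambda_0+\sum_{i=1}^l(2n-1)\Lambda_i$ reads
$$
\ch^{(s_0,\ldots,s_l;\,C_l^{(1)})}L(n-1,2n-1,\ldots,2n-1)=\frac{Q(ns_0,2ns_1,\ldots,2ns_l;A_{2l}^{(2)})}{Q(s_0,s_1,\ldots,s_l;C_l^{(1)})},
$$
and then, exactly as in the proof of Theorem \ref{T: Lepowskys formula}, to rewrite the numerator and the denominator as explicit periodic products by reading off the congruence classes of the associated specialized arrays.

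For the denominator I would analyze the array $\mathcal{N}_{C_l^{(1)}}^{(s_0,\ldots,s_l)}$. By Figure \ref{fig:cong11b} and the accompanying remark, this array has period $\sum_{j=0}^l s_j+\sum_{j=1}^{l-1}s_j$, and each of its triangles $\mathcal{B}_{C_l}(\theta)\otimes t^{j}$ splits, along the triangular decomposition $\mathfrak n_-+\mathfrak h+\mathfrak n_+$ of $\mathfrak g(C_l)$, into the grey triangle $ABC$ (the $\mathfrak n_-$-part), the catheti $IJ$ (the $\mathfrak h$-part, contributing the zeros) and the black triangle $DEF$ (the $\mathfrak n_+$-part, contributing the additive inverses modulo the period). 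Since the entries of the first grey triangle are precisely $\Delta(s_1,\ldots,s_l;C_l^{(1)})$, the denominator equals $\prod(1-q^i)$ taken over $i\equiv\pm a,b$ modulo $\sum_{j=0}^l s_j+\sum_{j=1}^{l-1}s_j$ with $a\in\Delta(s_1,\ldots,s_l;C_l^{(1)})$ and $b\in\{0\}^l$; this is the denominator of \eqref{ps2}.

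For the numerator I would analyze $\mathcal{N}_{A_{2l}^{(2)}}^{(ns_0,2ns_1,\ldots,2ns_l)}$, which is periodic with period $2n(2\sum_{m=0}^l s_m+2\sum_{m=1}^{l-1}s_m)$ and which, by the remark on the $A_{2l}^{(2)}$ arrays and Figure \ref{fig:cong5}, decomposes into small triangles $L_{B_l}(\theta)\otimes t^{2j}$ and big triangles $L_{B_l}(2\omega_1)\otimes t^{2j+1}$ whose hypotenuses lie in the bottom row and miss their midpoints. Splitting each small and big triangle along the $B_l$ triangular decomposition $\mathfrak n_-+\mathfrak h+\mathfrak n_+$, the grey $\mathfrak n_-$-parts, the $\mathfrak h$-catheti and the black $\mathfrak n_+$-parts occur at every power of $t$ and hence repeat with the half-period $n(2\sum_{m=0}^l s_m+2\sum_{m=1}^{l-1}s_m)$, contributing the classes $\pm a$ for $a\in\Delta(2ns_{l-1},\ldots,2ns_1,ns_0;A_{2l}^{(2)})$ together with the zeros $\{0\}^l$; by contrast the hypotenuses of the big triangles occur only at odd powers of $t$ and therefore repeat with the full period, contributing the $\pm c$ factor for $c\in S(2ns_l,\ldots,2ns_1;A_{2l}^{(2)})$. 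Dividing the numerator product by the denominator product yields \eqref{ps2}.

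The main obstacle is the congruence bookkeeping for the numerator: one must verify, by the weight computation indicated in the ``closely examining the weights'' step, that the entries of $\mathcal{N}_{A_{2l}^{(2)}}^{(ns_0,2ns_1,\ldots,2ns_l)}$ fall into exactly the classes $\pm a$ modulo the half-period, the zeros, and $\pm c$ modulo the full period, with the correct multiplicities, and in particular that the midpoint missing from each big-triangle hypotenuse is accounted for so that the set $S(2ns_l,\ldots,2ns_1;A_{2l}^{(2)})$ is counted once per full period rather than once per half-period. As a cross-check I would note that the present statement is the image of Theorem \ref{T: Wakimotos formula for 2n-1,...,2n-1,n-1} under the orientation-reversing Dynkin-diagram automorphism $i\mapsto l-i$, which interchanges $A_{2l}^{(2)}$ and ${A_{2l}^{(2)}}^{T}$ and reverses the order of $(s_0,\ldots,s_l)$; matching the two statements under this symmetry confirms the placement of the indices and the moduli.
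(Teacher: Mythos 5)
Your proposal is correct and follows exactly the route the paper intends: specialize Wakimoto's identity \eqref{W83_5} to $\Lambda=(n-1)\Lambda_0+\sum_{i=1}^l(2n-1)\Lambda_i$, then convert $Q(ns_0,2ns_1,\ldots,2ns_l;A_{2l}^{(2)})$ and $Q(s;C_l^{(1)})$ into periodic products by reading the congruence classes off the specialized arrays (grey/black triangles and $IJ$-lines recurring with the half-period, big-triangle hypotenuses contributing $S(2ns_l,\ldots,2ns_1;A_{2l}^{(2)})$ with the full period), which is precisely the analysis the paper carries out in Sections \ref{section_06}--\ref{section_07} and invokes without further proof for Theorem \ref{T: Wakimotos formula for n-1,2n-1,...,2n-1}. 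Your residue count ($4l^2+2l$ classes per half-period plus $2l$ per full period, matching $\dim\mathcal B_{B_l}(\theta)+\dim\mathcal B_{B_l}(2\omega_1)$ nodes per period) and the diagram-automorphism cross-check against Theorem \ref{T: Wakimotos formula for 2n-1,...,2n-1,n-1} confirm the bookkeeping.
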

\begin{example}\label{example_720a}
 For $l=2$, $s=(1,3,1)$ and $n=1$, we have 
$$\ch^{(1,3,1; \ C_2^{(1)})}L(0,1,1)=  \frac{\prod_{i \in\mathbb{N}}(1+q^{2i-1})}{ \prod_{j \equiv \pm 4 \mod 16} (1-q^j)^2}.$$ 
 For $l=2$, $s=(1,3,1)$ and $n=2$, we have $$\ch^{(1,3,1; \ C_2^{(1)})}L(1,3,3)  =\frac{\prod_{i \equiv \pm1,\pm 7,\pm 9, \pm 15 \mod 32}(1+q^i)}{\prod_{j\in\mathbb{N}} (1-q^{2j-1})\prod_{r \equiv \pm 4, \pm 8, \pm 8, \pm 12 \mod 32} (1-q^r)}.$$  
\end{example}
\begin{theorem}\label{T: Wakimotos formula for n-1,2n-1,...,2n-1,n-1}
The third set of Wakimoto's formulas of type $(s_0,s_1, \ldots,s_l; \ C_l^{(1)})$ is
\begin{align}\label{ps3}
&\ch^{(s_0,s_1, \ldots,s_l; \ C_l^{(1)})}L(n-1, 2n-1, \ldots,2n-1,n-1)=\\
\nonumber
=&  \prod_{i \equiv \pm a, b \mod (\sum_{j=0}^ls_j+\sum_{j=1}^{l-1}s_j), \ a \in \Delta(s_1,s_2, \ldots, s_l;C_l^{(1)}), \,b \in \{0\}^l}(1-q^i)^{-1} \\
\nonumber
&\times \prod_{\substack{i \equiv a, \pm b \mod 2n(\sum_{j=0}^ls_j+\sum_{j=1}^{l-1}s_j),\\ a \in \Delta(ns_0, 2ns_1,\ldots,2ns_{l-1}, ns_l;D_{l+1}^{(2)}) \cup \{0\}^l,\, b\in\Delta(2ns_1,\ldots,2ns_{l-1}, ns_l;D_{l+1}^{(2)})} } (1-q^i).\end{align}
\end{theorem}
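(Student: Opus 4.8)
The plan is to deduce \eqref{ps3} from Wakimoto's character formula \eqref{W83_6} by the same recipe employed in the proof of Theorem \ref{T: Lepowskys formula}. Setting $t=(t_0,\ldots,t_l)=(ns_0,2ns_1,\ldots,2ns_{l-1},ns_l)$, formula \eqref{W83_6} gives
$$\ch^{(s_0,\ldots,s_l;\,C_l^{(1)})}L(n-1,2n-1,\ldots,2n-1,n-1)=\frac{Q(t_0,\ldots,t_l;D_{l+1}^{(2)})}{Q(s_0,\ldots,s_l;C_l^{(1)})},$$
so it is enough to express the numerator and the denominator separately as products over residue classes and then to form the quotient.

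First I would compute the denominator. As the generating function of the specialized array $\mathcal{N}_{C_l^{(1)}}^{(s_0,\ldots,s_l)}$, whose period equals $\sum_{j=0}^l s_j+\sum_{j=1}^{l-1}s_j$, the quantity $Q(s_0,\ldots,s_l;C_l^{(1)})$ is read off from the positions of congruent nodes in Figure \ref{fig:cong11b}; recall that this array looks like Figure \ref{fig:cong11a} with the diagonal lines $GH$ removed. The grey triangles $ABC$ (the specialization of $\mathfrak{n}_{-}$) contribute the congruence triangle $\Delta(s_1,\ldots,s_l;C_l^{(1)})$, the black triangles $DEF$ (the specialization of $\mathfrak{n}_{+}$) contribute its modular additive inverse, and the common catheti $IJ$ (the specialization of the Cartan subalgebra $\mathfrak{h}\subset\mathfrak{g}(C_l)$) contribute $l$ copies of $0$. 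This is precisely the reciprocal $Q(s_0,\ldots,s_l;C_l^{(1)})^{-1}$, i.e. the first product (the one carrying the exponent $-1$) in \eqref{ps3}.

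Next I would compute the numerator $Q(t_0,\ldots,t_l;D_{l+1}^{(2)})$, whose modulus is the period $2\sum_{j=0}^l t_j=2n(\sum_{j=0}^l s_j+\sum_{j=1}^{l-1}s_j)$ appearing in \eqref{ps3}. The array analysis of Figures \ref{fig:cong11a} and \ref{fig:cong11} applies verbatim. Because the positions of the root vectors for $\alpha$ and $-\alpha$ are symmetric about the Cartan line $IJ$, i.e. $A+a=B+b=\cdots=0$ modulo the period, the grey triangles $ABC$ together with the black triangles $DEF$ contribute the symmetric pair $\pm\Delta(2ns_1,\ldots,2ns_{l-1},ns_l;D_{l+1}^{(2)})$, which is the $\pm b$ part. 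The single-sign contributions come from the common catheti $IJ$ (the specialization of $\mathfrak{h}$), giving $l$ copies of $0$, and from the diagonal lines $GH$ (the specialization of the vector representation $\mathcal{B}_{B_l}(\omega_1)$), which supply the remaining set $\Delta(ns_0,2ns_1,\ldots,2ns_{l-1},ns_l;D_{l+1}^{(2)})$. Collecting these residues gives the numerator in \eqref{ps3}, and dividing by the denominator already found yields \eqref{ps3}.

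The main obstacle will be the explicit residue bookkeeping for the numerator under the non-uniform specialization $t=(ns_0,2ns_1,\ldots,2ns_{l-1},ns_l)$: one must check, by tracing the weights along the rows of $\mathcal{N}_{D_{l+1}^{(2)}}^{(t_0,\ldots,t_l)}$, that the diagonal lines produce precisely the advertised single-sign set modulo $2n(\sum_{j=0}^l s_j+\sum_{j=1}^{l-1}s_j)$, and that the extra factor $n$ and the doubling of the interior parameters $2ns_1,\ldots,2ns_{l-1}$ create no accidental coincidences between the single-sign classes (the set $\{0\}^l$ and the diagonal set) and the symmetric $\pm b$ classes. This is a direct but delicate computation, entirely parallel to the ``direct calculations on the elements of the array'' used in the proof of Theorem \ref{T: Lepowskys formula}.
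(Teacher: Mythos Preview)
Your proposal is correct and follows exactly the approach the paper uses (implicitly, since the paper does not write out a separate proof for this theorem but relies on the template established in the proof of Theorem~\ref{T: Lepowskys formula}): start from Wakimoto's identity \eqref{W83_6}, then read off $Q(s_0,\ldots,s_l;C_l^{(1)})$ from the decomposition of $\mathcal{N}_{C_l^{(1)}}^{(s_0,\ldots,s_l)}$ into grey triangles, black triangles and the Cartan segments $IJ$ (Figure~\ref{fig:cong11b}), and $Q(ns_0,2ns_1,\ldots,2ns_{l-1},ns_l;D_{l+1}^{(2)})$ from the corresponding decomposition of $\mathcal{N}_{D_{l+1}^{(2)}}$ including the diagonal lines $GH$ (Figures~\ref{fig:cong11a} and~\ref{fig:cong11}). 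Your identification of which pieces yield the $\pm b$ classes, the $\{0\}^l$ multiset, and the single-sign diagonal contribution is precisely the paper's bookkeeping, and your closing remark about the residue verification being ``direct calculations on the elements of the array'' is the paper's own phrasing for the remaining routine step.
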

\begin{example}\label{example_722a}
For $l=2$, $s=(3,1,1)$ and $n=1$, we have 
$$\ch^{(3,1,1; \ C_2^{(1)})}L(0,1,0)=\prod_{i \equiv \pm 1 \mod 6} (1-q^i)^{-1}\prod_{j \equiv 6  \mod 12} (1-q^j)^{-1}.$$
For $l=2$, $s=(3,1,1)$ and $n=2$, we have 
$$\ch^{(3,1,1; \ C_2^{(1)})}L(1,3,1)=\prod_{i\in\mathbb{N}} (1-q^{2i-1})^{-2}\prod_{j \equiv 12  \mod 24} (1-q^j)^{-1}.$$
  For $l=2$, $s=(3,1,1)$ and $n=3$, we have 
\begin{align*}
\ch^{(3,1,1; \ C_2^{(1)})}L(2,5,2)=&
\prod_{\substack{i \in \mathbb{N}\\ i \not\equiv 0, \pm 9, \pm 27, 36 \mod 72}} (1-q^i)^{-1} \prod_{j \equiv \pm 1 \mod 6} (1-q^j)^{-1}.
\end{align*} 
\end{example}

\section{Borcea's correspondence for $C_l^{(1)}$ and $A_{2l}^{(2)}$}\label{section_08}

Let $k \in \mathbb{N}$ be fixed. The number of level $k=\sum_{i=0}^lk_i$ standard $C_l^{(1)}$-modules of highest weight $\Lambda=\sum_{i=0}^lk_i\Lambda_i$ is $\binom{k+l}{l}$. This number is the same as the number of level $2k+1$ standard $A_{2l}^{(2)}$-modules of highest weight $(2k_0+1)\Lambda_0+\sum_{i=1}^lk_i\Lambda_i$. 

It follows directly from the Lepowsky--Wakimoto product formula for the specialized characters of the Weyl--Kac character formulas of types $(2,1,\ldots, 1,1;C_l^{(1)})$ and $(1,1,\ldots, 1; A_{2l}^{(2)})$ (see \cite[Theorem 1]{W1} and \cite[Corollary 2.2.8]{W2}) and Lemmas \ref{L: Q(21...1;C)} and \ref{L: Q(1...1;a)} that
\begin{theorem} 
Let $\ch^{(1,1,\ldots, 1; A_{2l}^{(2)})}L(2k_0+1,k_1, \ldots, k_l)$ denote the principally specialized character of level $2k+1$ standard module $L(\Lambda)$ of the affine Lie algebra of type $A_{2l}^{(2)}$ with highest weight $\Lambda=(2k_0+1)\Lambda_0+\sum_{i=1}^lk_i\Lambda_i$. We have
\beq\label{id1}\ch^{(2,1,\ldots, 1; C_l^{(1)})} L(k_0,k_1 \ldots, k_l)=F(q) \cdot \ch^{(1,1,\ldots, 1; A_{2l}^{(2)})}L(2k_0+1,k_1, \ldots, k_l),\eeq
where 
$$F(q)
=\prod_{i\equiv \pm 1, \pm 3, \ldots , \pm(2l-1) \mod 2(2l+1)} (1-q^i).$$
\end{theorem}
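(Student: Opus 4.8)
The plan is to write both specialized characters as ratios of Wakimoto's $Q$-functions with a common numerator; then $F(q)$ is simply the ratio of the two denominators, each of which is evaluated by Lemmas \ref{L: Q(21...1;C)} and \ref{L: Q(1...1;a)}.

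First I would rewrite Wakimoto's formula \eqref{W83_1} in ratio form. Using Lemma \ref{L: Q(21...1;C)}, namely $Q(2,1,\ldots,1;C_l^{(1)})=\prod_{i\in\mathbb N}(1-q^i)^l=\phi(q)^{l}$, formula \eqref{W83_1} becomes
$$\ch^{(2,1,\ldots,1;\,C_l^{(1)})}L(k_0,\ldots,k_l)=\frac{Q(k_l+1,\ldots,k_1+1,2(k_0+1);A_{2l}^{(2)})}{Q(2,1,\ldots,1;C_l^{(1)})}.$$
Next I would apply the principal specialization of $A_{2l}^{(2)}$ (Lepowsky's product formula, equivalently \cite[Corollary 2.2.8]{W2}) to the level $2k+1$ module $L(2k_0+1,k_1,\ldots,k_l)$; since its highest weight has coordinates $(2k_0+1,k_1,\ldots,k_l)$, the shifted vector is $(2(k_0+1),k_1+1,\ldots,k_l+1)$ and the formula reads
$$\ch^{(1,\ldots,1;\,A_{2l}^{(2)})}L(2k_0+1,k_1,\ldots,k_l)=\frac{Q(2(k_0+1),k_1+1,\ldots,k_l+1;(A_{2l}^{(2)})^T)}{Q(1,\ldots,1;A_{2l}^{(2)})}.$$

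The crux --- and the step I expect to be the main obstacle --- is matching the two numerators. For this I would use the self-duality of $A_{2l}^{(2)}$: the orientation-reversing diagram automorphism $i\mapsto l-i$ from Section \ref{section_06} identifies $(A_{2l}^{(2)})^T$ with $A_{2l}^{(2)}$ and reads the specialization vector backwards. Since $Q(s;A)$ depends only on the graded dimensions $\dim\mathfrak g_j(s;A)$, which this isomorphism preserves, I obtain
$$Q(2(k_0+1),k_1+1,\ldots,k_l+1;(A_{2l}^{(2)})^T)=Q(k_l+1,\ldots,k_1+1,2(k_0+1);A_{2l}^{(2)}),$$
so the two numerators agree. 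Keeping track of this reversal, together with the level-$(2k+1)$ highest weight, is the only delicate point; the rest is formal.

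Dividing the two displays, the common numerator cancels, and by Lemmas \ref{L: Q(21...1;C)} and \ref{L: Q(1...1;a)}
$$F(q)=\frac{Q(1,\ldots,1;A_{2l}^{(2)})}{Q(2,1,\ldots,1;C_l^{(1)})}=\frac{\prod_{i\in\mathbb N}(1-q^i)^l\prod_{j\not\equiv l+1\mod(2l+1)}(1-q^{2j-1})}{\prod_{i\in\mathbb N}(1-q^i)^l}=\prod_{j\not\equiv l+1\mod(2l+1)}(1-q^{2j-1}).$$
It remains to rewrite the exponent set. The omitted odd integers $2j-1$ with $j\equiv l+1\pmod{2l+1}$ are precisely those $\equiv 2l+1\pmod{2(2l+1)}$, so the surviving odd residues modulo $2(2l+1)$ are, as residue classes, $\{1,3,\ldots,4l+1\}\setminus\{2l+1\}=\{\pm1,\pm3,\ldots,\pm(2l-1)\}$, which gives the asserted expression for $F(q)$. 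This final congruence check is routine.
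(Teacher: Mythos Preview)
Your proof is correct and follows essentially the same approach as the paper: write both specialized characters as $Q$-ratios with a common numerator (via Wakimoto's and Lepowsky's formulas), divide, and evaluate the resulting ratio of denominators using Lemmas \ref{L: Q(21...1;C)} and \ref{L: Q(1...1;a)}. The only difference is cosmetic---you spell out the diagram-automorphism step $Q(2(k_0+1),k_1+1,\ldots,k_l+1;(A_{2l}^{(2)})^T)=Q(k_l+1,\ldots,k_1+1,2(k_0+1);A_{2l}^{(2)})$ explicitly, whereas the paper absorbs this into its citation of \cite[Theorem 1]{W1} and \cite[Corollary 2.2.8]{W2}, which already state the $A_{2l}^{(2)}$ principal-specialization formula with the reversed vector.
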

\begin{proof}
From \cite[Theorem 1]{W1} (see also \cite{W2}) we have
\beqn\ch^{(2,1,\ldots, 1; C_l^{(1)})} L(k_0,k_1 \ldots, k_l)=\frac{Q(k_l+1,\ldots,k_1+1, 2(k_0+1); {A_{2l}^{(2)}})}{Q(2,1,\ldots,1;C_l^{(1)})},\eeqn
and 
\beqn \ch^{(1,1,\ldots, 1; A_{2l}^{(2)})}L(2k_0+1,k_1, \ldots, k_l)=\frac{Q(k_l+1,\ldots,k_1+1, 2(k_0+1); {A_{2l}^{(2)}})}{Q(1,1,\ldots,1;A_{2l}^{(2)})}.\eeqn
The theorem now follows from  Lemmas \ref{L: Q(21...1;C)} and \ref{L: Q(1...1;a)}.
\end{proof}

In this work we are interested in affine Lie algebras of type $ C_l^{(1)}$,  $\l\geq 2$, but most of our considerations hold as well for $l=1$ when $ C_1^{(1)}\cong A_1^{(1)}$. 
From that point of view the identity in \eqref{id1} is a generalization of the identity obtained in \cite{B} in the case when $l=1$. In the case of $l=1$ and $k=1$, we have two identities \eqref{id1} 
\begin{eqnarray}\nonumber
\ch^{(2,1; A_1^{(1)})} L(1,0)&=&\prod_{i\equiv \pm 1 \mod 6} (1-q^i)\cdot \ch^{(1,1; A_{2}^{(2)})}L(3,0)\\
\nonumber
&=&\frac{1}{ \prod_{i\equiv \pm 2,\pm 3 \mod 12} (1-q^{i})},\end{eqnarray}
and
\begin{eqnarray}\nonumber
\ch^{(2,1; A_1^{(1)})} L(0,1)&=&\prod_{i\equiv \pm 1 \mod 6} (1-q^i)\cdot \ch^{(1,1; A_{2}^{(2)})}L(1,1)\\
\nonumber
&=&\frac{\prod_{i\equiv \pm 2 \mod 12} (1-q^i)}{\prod_{i\in \mathbb{N}} (1-q^{2i-1})},\end{eqnarray}
and the corresponding products appear in \cite{MP} as product sides of two combinatorial identities for level $1$ standard $A_1^{(1)}$-modules and, on the other side, the same products appear in \cite{Capp} as product sides of Capparelli's identities for level $3$ standard $A_2^{(2)}$-modules. Moreover, by some sort of coincidence, the difference conditions for partitions coincide in both cases.

\section{Combinatorial identities}\label{section_09}

Let $l\geq2$. A downward path $\mathcal Z$ in the array $ \mathcal{B}_{C_l^{(1)}}$ is a sequence (or a subset) with an element in the top row followed by an adjacent element in the second row, and so on all the way to an element in the bottom row. The vertices denoted by $\bullet$'s in Figure \ref{F: extended arrangement of negative root vectors  for C2(1)} gives an example of a downward path.

A monomial
\begin{equation}\label{E:colored partition pi}
\pi=\prod_{b(j)\in \mathcal{B}_{C_l^{(1)}}^{-}}b(j)\sp{m_{b(j)}}\in S({\mathfrak g}(C_l^{(1)}))
\end{equation}
can be interpreted as a colored partition $\pi$: for $m_{b(j)}>0$ we say that $b(j)$ is a part of degree $|b(j)|=j$ and color $b\in \mathcal{B}_{C_l}(\theta)$ which appears in the partition $m_{b(j)}$ times. We define the degree, weight and length of a colored partition $\pi$ as
\begin{equation}\label{E:degree and length of pi}
|\pi| = \sum_{b(j)\in \bar{B}_\ell^{<0}}j\cdot{m_{b(j)}},\quad
\wt_{\mathfrak{h}}(\pi)
 = \sum_{b(j)\in \bar{B}_\ell^{<0}}\wt_{\mathfrak{h}}(b)\cdot{m_{b(j)}},\quad 
\ell(\pi)=\sum_{b(j)\in \bar{B}_\ell^{<0}}{m_{b(j)}}.
\end{equation}
We interpret $1$ as an empty partition $\emptyset$ with no parts of degree $0$ and length $0$.
A colored partition $\pi$ can be also interpreted as a function 
$$
\pi\colon  \mathcal{B}_{C_l^{(1)}}^-\to \mathbb Z_{\geq0},\qquad b(n)\mapsto m_{b(n)}
$$
with finite support.

Let $k$ be a positive integer. We say that a {\it colored partition $\pi$ satisfies the level $k$-difference conditions} if
\begin{equation}\label{E:level k difference conditions}
\sum_{b(j)\in \mathcal Z}m_{b(j)}\leq k\quad\text{for all downward paths}
\ \mathcal Z\subset \mathcal{B}_{C_l^{(1)}}^{-}.
\end{equation}
To formulate the {\it initial conditions} for $\pi$ we need to extend $\mathcal{B}_{C_l^{(1)}}^{-}$ with Chevalley generators associated to simple roots and simple coroots in $\mathfrak h$, i.e.
$$
\mathcal{B}_{C_l^{(1)}}^{-}\subset\mathcal{B}_{C_l^{(1)}}^{-e}=
\{e_0, e_1, \dots, e_l\}\cup\{h_1, \dots, h_l\}\cup
\mathcal{B}_{C_l^{(1)}}^{-}\subset\mathcal{B}_{C_l^{(1)}}.
$$
For example, for $l=2$ we have the extended array of root vectors on Figure \ref{F: extended arrangement of negative root vectors  for C2(1)} (with omitted arrows---compare with Figure \ref{F: the arrangement of negative root vectors  for C2(1)}).
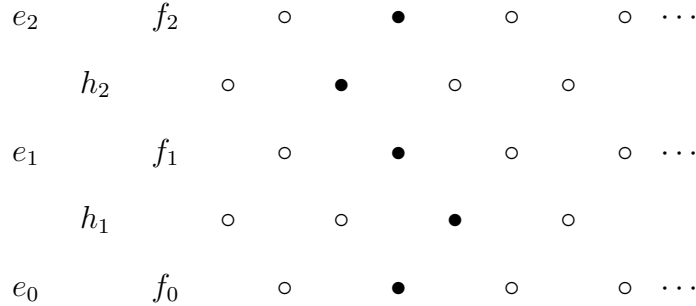
\begin{figure}[h!]
$$
\xymatrix@=0.7em {
e_2 &&
f_2&&
\circ&&
\bullet&&
\circ&&
\circ&
\hspace{-5pt}\cdots
\\
&h_2&&
\circ&&
\bullet&&
\circ&&
\circ&&
\\
e_1& &
f_1&&
\circ&&
\bullet&&
\circ&&
\circ&
\hspace{-5pt}\cdots
\\
&h_1&&
\circ&&
\circ&&
\bullet&&
\circ&&
 \\
{e_0}&&
{f_0}&&
\circ&&
\bullet&&
\circ&&
\circ &
\hspace{-5pt}\cdots
}$$\caption{The extended arrangement of negative root vectors  for $C_2^{(1)}$ and a downward path}
\label{F: extended arrangement of negative root vectors  for C2(1)}
\end{figure}
For nonnegative integers $k_0, k_1, \dots, k_l$, $k_0+ k_1+ \dots+ k_l=k$ and dominant  integral $\Lambda=k_0\Lambda_0+ k_1\Lambda_1+ \dots+ k_l\Lambda_l$ we say that {\it $\pi$ is $\Lambda$-admissible if the monomial}
$$
e_0^{k_0} e_1^{k_1} \dots  e_l^{k_l}h_1^0\dots  h_l^0\cdot\pi
$$
satisfies the level $k$ difference conditions on the extended array $\mathcal{B}_{C_l^{(1)}}^{-e}$. The generating function for $\Lambda$-admissible colored partitions is
\begin{equation}\label{E:generating function for admissible partitions}
\sum_{\pi\text{ is } \Lambda\text{-admissible}} e^{|\pi|\delta}
e^{\wt_{\mathfrak{h}}(\pi)}
\end{equation}
In \cite{CMPP} it was (implicitly) conjectured:
\begin{conjecture}\label{C: conjecture}
The generating function for $\Lambda$-admissible colored partitions is $\operatorname{ch}e^{-\Lambda}L_{C_l^{(1)}}(\Lambda) $ for $l, k\geq 2$.
\end{conjecture}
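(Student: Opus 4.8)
The plan is to show that the $\Lambda$-admissible colored partitions index a basis of the standard module $L(\Lambda)=L_{C_l^{(1)}}(\Lambda)$; once this is established the identity follows by bookkeeping of weights. Indeed, if $\pi=\prod_{b(j)}b(j)^{m_{b(j)}}$ is a monomial in $U(\mathfrak{n}_-)$ written in the negative root vectors of the array $\mathcal{B}_{C_l^{(1)}}^-$, then $\pi\cdot v_\Lambda$ has weight $\Lambda+\sum_{b(j)}\wt(b\otimes t^j)\,m_{b(j)}=\Lambda+\wt_{\mathfrak{h}}(\pi)+|\pi|\delta$, using the definitions in \eqref{E:degree and length of pi} and the rule $\wt(b\otimes t^j)=\wt_{\mathfrak{h}}(b)+j\delta$. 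Hence each such vector contributes the monomial $e^{\wt_{\mathfrak{h}}(\pi)+|\pi|\delta}$ to $\operatorname{ch}e^{-\Lambda}L(\Lambda)$, which is exactly the summand appearing in \eqref{E:generating function for admissible partitions}. The full conjecture is therefore equivalent to the assertion that $\{\pi\cdot v_\Lambda : \pi \text{ is } \Lambda\text{-admissible}\}$ is a weight basis of $L(\Lambda)$, and I would split the work into a spanning half and a linear independence half, following the Lepowsky--Wilson and Feigin--Stoyanevsky--Primc philosophy.

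For the spanning half I would fix the monomial order on $\mathcal{B}_{C_l^{(1)}}^-$ induced by reading the array along its diagonals, so that PBW monomials applied to $v_\Lambda$ span $L(\Lambda)$. From the level-$k$ integrability I would produce a family of relations in $U(\mathfrak{n}_-)$ whose leading terms (in this order) are precisely the configurations violating the difference conditions \eqref{E:level k difference conditions}: along every downward path $\mathcal{Z}$ a product of $k+1$ root-vector operators supported on $\mathcal{Z}$ is forced to vanish, this being the ``$x_\theta(z)^{k+1}=0$'' phenomenon transported along paths by commutation. A straightening/reduction algorithm then rewrites any monomial, modulo lower terms, as a combination of admissible ones, the initial conditions $f_i^{k_i+1}v_\Lambda=0$ being recorded by the passage to the extended array $\mathcal{B}_{C_l^{(1)}}^{-e}$ and the admissibility condition there. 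This shows the admissible partitions span $L(\Lambda)$, so the generating function \eqref{E:generating function for admissible partitions} dominates $\operatorname{ch}e^{-\Lambda}L(\Lambda)$ coefficientwise.

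The main obstacle is linear independence of the admissible monomials. Here I would first try to import the Poincar\'e--Birkhoff--Witt-type bases already built for $C_l^{(1)}$ in \cite{CMPP,PS1,PS2}: the array $\mathcal{B}_{C_l^{(1)}}^-$ is designed precisely so that its downward-path combinatorics should match the indexing sets there, and independence would transfer once admissibility is identified with those indexing sets. If no direct match is available, the standard fallback is the vertex-operator/intertwiner method of Georgiev: use the simple-current intertwining operators among the level-$k$ standard $C_l^{(1)}$-modules to push a hypothetical relation among admissible monomials onto a module of smaller highest weight, extract the coefficient of a distinguished (minimal) monomial, and conclude by induction on $k$ and on $\Lambda$. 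Showing that the difference conditions are sharp enough to separate the admissible monomials---that no hidden relations force further reductions beyond those already used in the spanning step---is the crux, and is exactly the point left open in \cite{CMPP}.

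Finally, as both a consistency check and a guide, I would specialize: applying $F_s$ to \eqref{E:generating function for admissible partitions} counts admissible partitions weighted by $\sum_i s_i k_i$, and the resulting one-variable series must equal the corresponding Lepowsky--Wakimoto product of Section~\ref{section_07}. For the principal specialization this is Theorem~\ref{T: Lepowskys formula}, while the remaining specializations are recorded in Theorems~\ref{T: Wakimotos formula for 21...12}--\ref{T: Wakimotos formula for n-1,2n-1,...,2n-1,n-1}. Matching the admissible-partition counts against these products for all $s$ pins down the full two-variable identity, and together with the spanning and independence results would complete the proof.
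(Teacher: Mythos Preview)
The statement you are trying to prove is explicitly a \emph{conjecture} in the paper, not a theorem: the paper introduces it with ``In \cite{CMPP} it was (implicitly) conjectured'' and labels it Conjecture~\ref{C: conjecture}. There is no proof in the paper to compare against. The subsequent Remark records only the known special cases: $l=1$ (due to \cite{MP,FKLMM,F}) and $l\geq 2$, $k=1$ (due to \cite{DK,R,PS1}); the general case $l,k\geq 2$ is open.

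Your proposal is a reasonable outline of the standard Lepowsky--Wilson/Feigin--Stoyanovsky/Primc strategy, and you correctly identify that the entire difficulty lies in the linear independence half. But you do not actually prove linear independence: you say you would ``try to import'' the bases from \cite{CMPP,PS1,PS2}, or ``if no direct match is available'' fall back on a Georgiev-type intertwiner argument, and you yourself write that this ``is exactly the point left open in \cite{CMPP}.'' That is precisely why the statement is a conjecture. Neither the importation nor the intertwiner argument is routine here---the level-$k$ $C_l^{(1)}$ case has resisted exactly these methods---so what you have written is a plan of attack, not a proof. Your final paragraph about matching specializations against the Lepowsky--Wakimoto products is also only a consistency check: agreement of specialized characters does not by itself establish the basis property, since different sets of monomials can have the same multivariable generating function.
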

\begin{remark}
For $l=1$ the statement above is proved in \cite{MP, FKLMM} for all $\Lambda$ and in \cite{F} for $\Lambda=k\Lambda_0$, $k\geq1$. For $l\geq2$ and $k=1$ the statement above is proved in \cite{DK, R} for all $\Lambda$ and in \cite{PS1} for $\Lambda=\Lambda_0$.
\end{remark}

The $(s_0, s_1, \dots, s_l)$-specialization defines a map 
$$
\mathcal{B}_{C_l^{(1)}}^{-}\to \mathcal{N}_{C_l^{(1)}}^{(s_0, s_1, \dots, s_l)},\qquad
b(j)\mapsto a_{b(j)}=\Vert b(j)\Vert,
$$
which extends to a map
$$
\pi=\prod_{b(j)\in \mathcal{B}_{C_l^{(1)}}^{-}}b(j)\sp{m_{b(j)}}\mapsto
\Vert\pi\Vert
=\sum_{a_{b(j)}\in \mathcal{N}_{C_l^{(1)}}^{(s_0, s_1, \dots, s_l)}}a_{b(j)} \cdot{m_{b(j)}}.
$$
In other words, the $(s_0, s_1, \dots, s_l)$-specialization of a colored partition $\pi$ gives a partition $\Vert\pi\Vert$ defined on the array $\mathcal{N}_{C_l^{(1)}}^{(s_0, s_1, \dots, s_l)}$, and Conjecture \ref{C: conjecture} implies that the generating function 
$$
\sum_{\pi\text{ is } \Lambda\text{-admissible}} q^{-\Vert\pi\Vert}
$$
for $\Lambda$-admissible colored partitions $\Vert\pi\Vert$ is the specialized character $\operatorname{ch}^{(s_0, s_1, \dots, s_l; \,C_l^{(1)} )}L(\Lambda)$. 
If the specialized character can be written as an infinite periodic product, the conjecture gives a Rogers--Ramanujan-type combinatorial identity. By using Lepowsky's product formula for the principal specialization $(1,1,\dots,1)$, the conjectured identities are formulated in \cite{CMPP}. By using Wakimoto's product formulas we get other Rogers--Ramanujan-type combinatorial identities.

\begin{example}\label{Ex: 21...1 identities} For $l=2$ and $(2,1,1)$-specialization we have the array $\mathcal N_{C_2^{(1)}}^{(2, 1, 1)}$. For $\Lambda= k\Lambda_0$, $k\geq2$, we can apply Theorem \ref{T: Wakimotos formula for 21...1} and get the conjectured identity:

The generating function for partitions on the array
$$
\begin{matrix}
& & & & \textbf{\upshape 7}& & \textbf{\upshape 9}& & \textbf{\upshape 11}&   \\
& & &\textit{\textcolor{red}{5}} & &\textbf{\upshape 8}& &\textbf{\upshape 10} & & \textbf{\upshape 12}\\
& &\textit{\textcolor{red}{4}} & &\textit{\textcolor{red}{6}} & &\textbf{\upshape 9} & &\textbf{\upshape 11} & \\
&\textit{\textcolor{red}{3}} & &\textit{\textcolor{red}{5}} & &\textit{\textcolor{red}{7}} & &\textbf{\upshape 10} & &\textit{\textcolor{red}{13}} \\
\textit{\textcolor{red}{2}}& &\textit{\textcolor{red}{4}} & &\textit{\textcolor{red}{6}} & &\textit{\textcolor{red}{8}} & &\textit{\textcolor{red}{12}} &
\end{matrix}
$$
satisfying level $k$ difference conditions is
\begin{align*}
&  \prod_{i \in \mathbb{N}}(1-q^i)^{-2}
\prod_{\substack{i \equiv \pm a, b \mod 2(k+3), \\ a \in \{1,1,2,3\}, \, b \in \{0\}^2}} (1-q^i) 
 \prod_{\substack{j\equiv \pm c \mod 4(k+3), \\  c \in \{2(k+1), 2(k+2)\}}} (1-q^j).
\end{align*}
For $k=1$ this is a theorem due to results in \cite{PS1, DK, R}, and $k>1$ due to \cite{PT}.
\end{example}

\begin{remark}
The product formula in Example \ref{example_79a} is the conjectured generating function for $(1, 2, 1, 2, 1)$-admissible partitions
$$ 
n=\sum_{a\in\mathcal N}m_a\cdot a
$$
on the array $\mathcal N=\mathcal N_{C_4^{(1)}}^{(2,1,1,1,2)}$ that consists of $3$ copies of the positive integers,  one copy of the even positive integers  and one copy of the positive integers congruent to $5$ modulo $10$.

The product formulas in Examples \ref{example_712a}  and \ref{example_715a} are the conjectured generating functions for $(2, 1, 1, 2, 1)$-admissible partitions on two different arrays (see Figures \ref{fig:arrayC3} and \ref{fig:arrayC4corr}) that consist of $4$ copies of the positive integers.
The product formula in Example \ref{example_716a} is another conjectured generating function for admissible partitions, also satisfying level $7$ difference conditions.

For specializations $(2,1,\dots,1,2)$, $(2,1,\dots,1,1)$ and $(1,1,\dots,1,2)$, it seems that we have the conjectured Rogers--Ramanujan-type  partition identity for all $\Lambda$-admissible partitions. On the other side, Theorems \ref{T: Wakimotos formula for 2n-1,...,2n-1,n-1}, \ref{T: Wakimotos formula for n-1,2n-1,...,2n-1} and \ref{T: Wakimotos formula for n-1,2n-1,...,2n-1,n-1} correspond to Wakimoto's unspecialized product formulas, but only for specific highest weights $\Lambda$. The product formulas in Examples \ref{example_718a}, \ref{example_720a} and \ref{example_722a} have  combinatorial interpretations, and Conjecture \ref{C: conjecture} gives
the corresponding Rogers--Ramanujan-type  partition identities; it is the theorem in level $1$ case due to \cite{DK}, and the conjectures in all the other cases. It seems that unspecialized Wakimoto's product formulas should provide many new (conjectured) Rogers--Ramanujan-type  partition identities for different specializations.
\end{remark}

 \section*{Acknowledgment}
 The authors would like to thank Thomas Edlund for  producing the images of Dynkin diagrams in Figure \ref{Dynkin diagrams}  and for his proofreading of the paper.  
 This work was supported by the project "Implementation of cutting-edge research and its application as part of the Scientific Center of Excellence for Quantum and Complex Systems, and Representations of Lie Algebras", PK.1.1.02, European Union, European Regional Development Fund. Moreover, this work has been  supported  by Croatian Science Foundation under the project UIP-2019-04-8488.  Also, this work has been partially supported by the Croatian Science Foundation under the project IP-2022-10-9006.
\linespread{1.0}

\end{document}